\DeclareSymbolFont{AMSb}{U}{msb}{m}{n}
\def\ignore#1{\relax}
\def\new@mathgroup{\alloc@8\mathgroup\mathchardef\@cclvi}
\patchcmd{\document@select@group}{\sixt@@n}{\@cclvi}{}{}
\patchcmd{\select@group}{\sixt@@n}{\@cclvi}{}{}
\def\deltabold{{\bm \delta}}
\def\qbold{{\bm q}}
\def\zbold{{\bm z}}
\newcommand\boldq{\qbold}
\def\p #1{ \bm {#1}}
\def\pbar #1{\overline{\p {#1}}}
\renewcommand{\ge}{\geqslant}
\renewcommand{\ge}{\geqslant}
\renewcommand{\le}{\leqslant}
\renewcommand{\unrhd}{\trianglerighteqslant}
\renewcommand{\preceq}{\preccurlyeq}
\theoremstyle{plain}
\newtheorem{theorem}{Theorem}[section]
\newtheorem{proposition}[theorem]{Proposition}
\newtheorem{lemma}[theorem]{Lemma}
\newtheorem{corollary}[theorem]{Corollary}
\newcounter{saveenumi}
\theoremstyle{definition}
\newtheorem{definition}[theorem]{Definition}
\newtheorem{remark}[theorem]{Remark}
\newtheorem{notation}[theorem]{Notation}
\numberwithin{equation}{section}
 \DeclareMathOperator{\A}{A}
 \DeclareMathOperator{\Z}{\mathbb{Z}}
 \DeclareMathOperator{\Hom}{Hom}
\DeclareMathOperator{\Shape}{Shape}
\DeclareMathOperator{\Res}{Res}
\DeclareMathOperator{\Ind}{Ind}\DeclareMathOperator{\Span}{span}
\DeclareMathOperator{\id}{id}
\DeclareMathOperator{\row}{row}
\DeclareMathOperator{\col}{col}
\DeclareMathOperator{\node}{node}
\DeclareMathOperator{\shape}{shape}
\DeclareMathOperator{\spn}{span}
\DeclareMathOperator{\conn}{conn}
\newcommand{\Std}{\mathrm{Std}}
\newcommand{\SStd}{\mathrm{SStd}}
\newcommand\semistd[2]{\mathcal{T}^\SStd_{#1}(#2)}
\newcommand\std[1]{\mathcal{T}^\Std(#1)}
\newcommand{\cell}[3]{\Delta_{{#1}_{#2}}^{#3}}
\newcommand{\el}{l}
\newcommand\generator[2]{\delta_{#1}^{#2}}
\newcommand\inv{^{-1}}
\newcommand{\dd}[3]{d_{{#1} \to {#2}}^{\power {#3}}}
\newcommand{\uu}[3]{u_{{#1} \to {#2}}^{\power {#3}}}
\newcommand{\ddbar}[3]{\bar d_{{#1} \to {#2}}^{\power {#3}}}
\newcommand{\uubar}[3]{\bar u_{{#1} \to {#2}}^{\power {#3}}}
\def\mf{\mathfrak}
\def\la{\lambda}
\def\mft{\mathfrak t}
\def\mfs{\mathfrak s}
\def\mfv{\mathfrak v}
\def\power #1{^{(#1)}}
 \def\spp{\power}
\def\leftbrace{\big\{}
\def\rightbrace{\big\}}
\def\ignore#1{\relax}
\def\Q{\mathbb Q}
\def\bmw #1{W_{#1}}
\begin{document}\usetikzlibrary{matrix,arrows,decorations.pathreplacing,backgrounds,decorations.markings}

\title[Cellular bases]
{Cellular bases for algebras with a Jones basic construction}

\author[J. Enyang]{John Enyang}
\address{Department of Mathematics, City University London,   London, United Kingdom}
\email{john.enyang.1@city.ac.uk}

\author[F.M.Goodman]{Frederick M. Goodman}
\address{Department of Mathematics, University of Iowa, Iowa City, IA, USA}
\email{frederick-goodman@uiowa.edu}

\begin{abstract}
We define a method which produces explicit cellular bases for algebras obtained via a Jones basic construction. For the class of algebras in question, our method gives formulas for generic Murphy--type cellular bases indexed by paths on branching diagrams and compatible with restriction and induction on cell modules. The construction given here allows for a uniform combinatorial treatment of cellular bases and representations of the  Brauer, Birman--Murakami--Wenzl, Jones--Temperley--Lieb, and partition algebras, among others.




\end{abstract}

\thanks{The first author is grateful to Arun Ram and Andrew Mathas for many stimulating conversations during the course of this research and for detailed comments on previous versions of this paper. The second author thanks Andrew Mathas for many helpful conversations and the School of Mathematics and Statistics at the University of Sydney for its hospitality.   We thank Steffen K\"onig and the Institute for Algebra and Number Theory at the Univerity of Stuttgart for hospitality.
This research was supported by the Australian Research Council (grant ARC DP--0986774) at the University of Melbourne and (grants ARC DP--0986349, ARC DP--110103451) at the University of Sydney.}


\keywords{Cellular algebra; Jones basic construction; Murphy basis; Brauer algebra, Birman--Murakami--Wenzl algebra; Partition algebra} 
\maketitle

\setcounter{tocdepth}{1}
\tableofcontents
\setcounter{tocdepth}{3}

\section{Introduction}
The notion of cellularity was introduced by Graham and Lehrer~\cite{MR1376244} as a tool for studying the modular representation theory  of Hecke algebras and other algebras with geometric connections.
 Cellular algebras are defined by the existence of a \textit{cellular basis} with combinatorial properties that reflect the Robinson--Schensted correspondence in the Iwahori--Hecke algebra of the symmetric group. 
 From the cellular basis, one obtains a family of modules known as cell modules;  all simple modules of a cellular algebra occur  as quotients of the cell modules.    
 Important examples of cellular algebras include the Iwahori--Hecke algebras of the symmetric groups, 
 Brauer algebras, Birman--Murakami--Wenzl algebras, Jones--Temperley--Lieb algebras and  
 partition algebras~\cite{MR1376244, MR1327362, MR1711582, MR1784677}.  
 
 A cellular algebra always has many different cellular bases,  and basis--free characterizations of cellularity ~\cite{MR1753809, MR2794027}  are also helpful for some purposes.  However, particular cellular bases with special properties play an important role in applications of cellularity. In particular, the Murphy basis ~\cite{MR1327362}  of the Iwahori--Hecke algebra $H_n(q^2)$ of $\mathfrak S_n$ is a cellular basis with many remarkable properties.  The triangular action of  the set of Jucys--Murphy elements of the Hecke algebra on the 
 Murphy basis   allows the construction of the seminormal representations and the classification of simple modules and blocks,  see ~\cite{MR1711316}, Chapter 3.  Several papers in the literature have aimed at generalizations or axiomatizations of the Murphy basis, the seminormal basis,  and the set of  Jucys--Murphy elements, for example ~\cite{MR2414949, MR2774622, MR2348099, MR2542221}.  The present paper is also a contribution to this theme.

 Several fundamental  examples of cellular algebras actually occur in towers, that is increasing sequences  $(A_n)_{n\ge 0}$  of algebras with a common identity, with coherent cellular structures.  Coherence means that a cell module of $A_n$, induced to $A_{n+1}$ or restricted to $A_{n-1}$ has a  {\em cell filtration},  that is, a filtration with cell modules as subquotients.  The prototypical example of a coherent tower of cellular algebras is the sequence of Hecke algebras $H_n(q^2)$.   
 The idea of coherence of cellular structures  was introduced in  ~\cite{MR2794027, MR2774622}, where it was used to study cellularity of a tower of algebras 
 $(A_n)_{n\ge 0}$ which is obtained from another tower of algebras $(H_n)_{n\ge 0}$ by repeated Jones basic constructions.  An example of such a pair of towers of algebras is the following:  $(A_n)_{n\ge 0}$ is the sequence of Birman-Wenzl-Murakami algebras, and $(H_n)_{n\ge 0}$ is the sequence of Hecke algebras, $H_n = H_n(q^2)$.  
 
 An innovation in this paper is to use a variant of the notion of cellularity:  a cellular algebra $A$ is called {\em cyclic cellular}  if all of its cell modules are cyclic $A$--modules. 
 Although cyclic cellularity is nominally stronger than cellularity, in fact most important classes of cellular algebras appearing in representation theory are cyclic cellular.  In particular,  the Hecke algebras  $H_n(q^2)$  are cyclic cellular.  

In this paper, we study coherent towers $(A_n)_{n\ge 0}$ of cyclic cellular algebras.  We first obtain some rather simple general results about cellular bases in such towers, in \hyperref[section: bases in coherent towers]{Section~\ref*{section: bases in coherent towers}}.    First we observe that there exists a system of ``branching factors''  associated to each edge of the generic branching diagram for the tower.  Then we note that an ordered product of branching factors along paths on the generic branching diagram determine bases of each cell module of each $A_n$ as well as a cellular basis of each $A_n$.   The bases obtained are ``families of path bases,'' in the sense of \hyperref[definition: path bases]{Definition~\ref*{definition: path bases}}.  
Consequently, when the tower $(A_n)_{n\ge 0}$ has a family of Jucys--Murphy elements in the sense of 
~\cite{MR2774622}, these elements act triangularly on the path bases, by 
~\cite{MR2774622}, Propositions 3.6 and 3.7.  Hence,   Mathas' theory of cellular algebras with Jucys--Murphy elements  and seminormal representations ~\cite{MR2414949} can be applied.   

	In \hyperref[section: Hecke algebra example]{Section~\ref*{section: Hecke algebra example}}, we recall that the sequence of Iwahori--Hecke algebras of the symmetric groups is a coherent tower of cyclic cellular algebras.   We compute  branching factors for reduced and  induced cell modules.  We show that the path bases obtained via ordered products of branching factors coincide with the Murphy bases ~\cite{MR1327362}. 

In \hyperref[framework axioms]{Section~\ref*{framework axioms}}, we return to the study of pairs of towers of algebras $(A_n)_{n\ge 0}$ and $(H_n)_{n\ge 0}$, 
where the algebras $A_n$ are obtained by repeated Jones basic constructions from the algebras $H_n$.  We augment the framework which was established in  ~\cite{MR2794027, MR2774622} for such pairs of towers by the assumption that the algebras $H_n$ are cyclic cellular.  It follows easily from the previous work in  ~\cite{MR2794027, MR2774622} that the tower $(A_n)_{n\ge 0}$ is a coherent tower of cyclic cellular algebras.   We show here that  branching factors, and therefore path bases for the tower $(A_n)_{n\ge 0}$ can be obtained by explicit formulas from branching factors for the tower $(H_n)_{n\ge 0}$.  

Finally, in \hyperref[section: applications]{Section~\ref*{section: applications}}, we apply our results to the Brauer algebras,  Birman--Murakami--Wenzl (BMW) algebras, Jones--Temperley--Lieb algebras, and partition algebras.  Thus, we obtain explicit path bases for these algebra which are in every respect analogues of Murphy's cellular basis of the Hecke algebras $H_n(q^2)$.   Jucys--Murphy elements are known for each of these examples (see ~\cite{MR2774622, MR3092697} and further references in these papers), so the theory of ~\cite{MR2414949} is applicable. 

A complication in our approach to the Murphy type bases is that the results of ~\cite{MR2794027, MR2774622}  do not apply to the basic construction algebras  defined over their generic ground ring, say $R_0$, but only to the algebras defined over $R_0[\deltabold\inv]$,  where $\deltabold$ is the ``loop parameter'';  see \hyperref[subsection:  cellularity and jones setting and correction]{Section~\ref*{subsection:  cellularity and jones setting and correction}}, where a mistake in   ~\cite{MR2794027, MR2774622} is discussed and corrected.   
Therefore, the Murphy type bases appear {\em a priori} to be bases only for the algebras defined over $R_0[\deltabold\inv]$.  However, as the bases are explicit, we can check for each of our examples that the Murphy type basis is actually a basis for the algebras defined over the generic ground ring $R_0$.

An illustration of the utility of the explicit ``path basis'' approach to cellularity in this paper is provided in~\cite{MR3092697}, where the Murphy-type bases of \hyperref[partition algebra basis]{Theorem~\ref*{partition algebra basis}} have been used to obtain an analogue of the Young seminormal form for partition algebras.
\ignore{\footnote{The reference on page 1746 of \cite{MR3092697} to Lemma 2.27 in this paper should be to Lemma 3.13 instead, and the reference on the same page to Theorem 5.29 in this paper should be to Theorem 6.10.} 
}
In the cases of the Brauer and BMW algebras, our results recover the Murphy type bases  obtained in~\cite{MR2348099}; however, the construction here is simpler, and does not involve computations in the braid group.  Rui and Si ~\cite{MR2542221}  used the path bases from ~\cite{MR2348099} to compute Gram determinants for cell modules of the BMW algebras, and to obtain definitive semisimplicity results.

Finally, we note that the results of Ariki and Mathas~\cite{MR1750939} and Mathas~\cite{MR2531227} on restriction and induction on cell modules of the cyclotomic hecke algebras, imply that the construction of cellular bases given here applies equally well to the  cyclotomic BMW algebras with admissible parameters. In this setting, our construction would recover the generalisation of~\cite{MR2348099} to the cyclotomic case given by Rui and Si in~\cite{MR2912472}.

\section{Preliminaries}
\subsection{Cellular algebras}
Cellular algebras were defined by Graham and Lehrer~\cite{MR1376244}. In this paper we use a slightly weaker version of cellularity which was introduced in ~\cite{MR2510050, MR2794027}.
\begin{definition}\label{c-d}
Let $R$ be an integral domain. A \emph{cellular algebra} is a tuple $(A,*,\hat{A},\unrhd,\mathscr{A})$ where
\begin{enumerate}[label=(\arabic{*}), ref=\arabic{*},leftmargin=0pt,itemindent=1.5em]
\item $A$ is a unital $R$--algebra and $*:A\to A$ is an algebra  involution, that is an $R$--linear anti--automorphism of $A$ such that $(x^*)^* = x$ for $x \in A$;
\item $(\hat{A},\unrhd)$ is a finite partially ordered set, and $\hat{A}^\lambda$, for $\lambda\in\hat{A}$, is a finite indexing set;
\item The set
\begin{align*}
\mathscr{A}=\big\{c_\mathfrak{st}^\lambda  \ \big | \  \text{$\lambda\in\hat{A}$ and $\mathfrak{s},\mathfrak{t}\in\hat{A}^\lambda$}\big\},
\end{align*}
is an $R$--basis for $A$, for which the following conditions hold:
\begin{enumerate}[label=(\alph{*}), ref=\alph{*},leftmargin=0pt,itemindent=1.5em]
\item\label{c-d-1} Given $\lambda\in\hat{A}$, $\mathfrak{t}\in\hat{A}^\lambda$, and $a\in A$, there exist 
coefficients $r_\mfv(\mft, a) \in R$, for $\mathfrak{v}\in\hat{A}^\lambda$, such that, for all $\mathfrak{s}\in\hat{A}^\lambda$, 
\begin{align}\label{r-act}
c_\mathfrak{st}^\lambda a\equiv 
\sum_{\mathfrak{v}\in\hat{A}^\lambda}
r_\mathfrak{v}(\mft, a) c_{\mathfrak{sv}}^\lambda \mod{A^{\rhd\lambda}},
\end{align}
where $A^{\rhd\lambda}$ is the $R$--module generated by
\begin{align*}
\big\{
c^\mu_\mathfrak{st}\ \big \vert \  \text{$\mu\in\hat{A}$, $\mathfrak{s},\mathfrak{t}\in\hat{A}^\mu$ and $\mu\rhd\lambda$}
\big\}.
\end{align*}
\item\label{c-d-2} If $\lambda\in\hat{A}$ and $\mathfrak{s},\mathfrak{t}\in\hat{A}^\lambda$, then $(c_\mathfrak{st}^\lambda)^*\equiv (c_\mathfrak{ts}^\lambda)\mod{A^{\rhd\lambda}}$.
\end{enumerate}
\end{enumerate}
The tuple $(A,*,\hat{A},\unrhd,\mathscr{A})$ is a \textit{cell datum} for $A$. 
\end{definition}
If $A$ is an algebra with cell datum $(A,*,\hat{A},\unrhd,\mathscr{A})$  we will frequently omit reference to the cell datum for $A$ and simply refer to $A$ as a \textit{cellular algebra}.   The basis $\mathscr{A}$ is called a {\em cellular basis} of $A$.  

\smallskip
From points 3(a) and 3(b) of the definition of cellularity, we have for $a \in A$ and $\mfs, \mft \in \hat A^\la$,
$$
a c^\la_{\mfs \mft}  \equiv \sum_{\mfv \in \hat A^\la}  r_\mfv(\mfs, a^*) c^\la_{\mfv \mft}  \mod{A^{\rhd\la}}.
$$
An order ideal $\Gamma \subset \hat A$ is a subset with the property that if $\la \in \Gamma$ and 
$\mu \unrhd \la$, then $\mu \in \Gamma$.  It follows from the axioms of a cellular algebra that for any order ideal $\Gamma$ in $\hat A$, 
$$
A^\Gamma = \Span \big\{c^\la_{\mfs \mft}  \ \big \vert \   \la \in \Gamma, \mfs, \mft \in \hat A^\la\big\}
$$
is a two sided ideal of $A$.  In particular $A^{\rhd \la}$ and
$$
A^{\unrhd \la} = \Span\ \big\{
c^\mu_\mathfrak{st}\ \big \vert \  \text{$\mu\in\hat{A}$, $\mathfrak{s},\mathfrak{t}\in\hat{A}^\mu$ and $\mu\unrhd\lambda$}
\big\}
$$
are two sided ideals.

\begin{definition} \label{definition: cell module}
Let $A$ be a cellular algebra, and $\lambda\in\hat{A}$. The \textit{cell module} $\cell A {} \lambda$  is the right $A$--module defined as follows.  As an $R$--module, $\cell A {} \lambda$ is free with basis indexed by $\hat A^\lambda$,  say $\{c^\la_\mft  \mid \mft \in \hat A^\lambda \}$.  
The right $A$--action is given by 
$$
c_\mft^\la a =  \sum_{{\mathfrak{v}\in\hat{A}^\lambda}} r_\mfv(\mft, a)  c^\la_\mfv,
$$
where the coefficients $r_\mfv(\mft, a)$  are those of Equation~\eqref{r-act}.
\end{definition}

Thus, for any $\mfs \in \hat A^\la$,  the map
$$
c^\la_\mft \mapsto c^\la_{\mfs \mft} + A^{\rhd\la}
$$
is an injective $A$--module homomorphism of the cell module $\cell A {} \la$ into $A^{\unrhd \la}/A^{\rhd\la}$.

We now mention some generalities regarding bimodules over algebras with involution.
If $A$ and $B$ are $R$--algebras with involutions denoted by $*$,  then we have a functor
$M \mapsto M^*$   from
$A$--$B$ bimodules to $B$--$A$ bimodules, as follows.  As an $R$--module, $M^*$ is just a copy of $M$ with elements marked by $*$.  The $B$--$A$ bimodule structure of $M^*$ is determined by 
$b x^* a = (a^* x b^*)^*$.    We have a natural isomorphism $M^{**}  \cong M$, via $x^{**} \mapsto x$.   In particular, taking $B$ to be $R$ with the trivial involution,  we get a functor from left $A$--modules to right $A$ modules.  Similarly, we have a functor from right $A$--modules to left $A$--modules.   (If $\Delta \subset A$ is a left or right ideal, we have two meanings for $\Delta^*$, namely application of the functor $*$, or application of the involution in $A$,  but these agree as right or left $A$--modules.)
If ${}_A M$ is a left $A$--module and  $N_A$ is a right  $A$--module,  then
$$
(M \otimes_R N)^* \cong  N^* \otimes_R M^*,  
$$
as $A$--$A$ bimodules, with the isomorphism determined by $(m \otimes n)^* \mapsto n^* \otimes m^*$. 
In particular if $M_A$ is a right $A$--module and we identify $M^{**}$ with $M$  and
$(M^* \otimes M)^*$ with $M^{*} \otimes M^{**}  = M^* \otimes M$,   then we have $(x^* \otimes y)^* = y^* \otimes x$.    

Now we apply these observations with $A$ a cellular algebra and $\cell A {} \la$ a cell module.  The assignment $$\alpha_\la : c^\la_{\mfs \mft}  + A^{\rhd \la}  \mapsto  (c^\la_\mfs)^* \otimes (c^\la_\mft)$$ determines an
$A$--$A$ bimodule isomorphism from $A^{\unrhd\la}/\A^{\rhd\la}$ to $(\cell A {} \la)^* \otimes_R \cell A  {} \la$. Moreover,
we have $*\circ \alpha_\la =  \alpha_\la \circ *$, which reflects the cellular algebra axiom
$(c^\la_{\mfs \mft})^* \equiv c^\la_{\mft \mfs} \mod  A^{\rhd \la}$.   When it is necessary to identify the algebra we are working with, we will write $\alpha_\la^A$  instead of
$\alpha_\la$.  

The importance of the maps $\alpha_\la$ for the structure of cellular algebras was stressed by K\"onig and Xi 
~\cite{{MR1753809}, MR1648638}.

\subsection{Generic ground rings}
The most important examples of cellular algebras are actually families $A^S$  of algebras defined over various integral ground rings $S$,  possibly containing distinguished elements (parameters) which enter into the definition of the algebras.   The prototypical example is the Iwahori--Hecke algebra of the symmetric group $\mathcal{H}_k=\mathcal{H}_k(q^2)$, which can be defined over any integral domain $S$ with a distinguished invertible element $q$;  see \hyperref[e-i-h]{Section~\ref*{e-i-h}} for the detailed description.

Again in the most important examples, there is a ``generic ground ring''  $R$ for $A$  with the following properties:
\begin{enumerate}
\item For any integral ground ring $S$ there is a ring homomorphism from $R$ to $S$, and the algebra over $S$ is the specialization of the algebra over $R$,  that is $A^S \cong A^R \otimes_R S$.  Likewise, the cell modules of $A^S$ are specializations of those of $A^R$, that is  $\cell  {A^S} {} \la \cong \cell  {A^R} {} \la \otimes_R S$.  
\item $R$ has characteristic zero, and if $F$ denotes the field of fractions of $R$, then
$A^F$ is split semisimple;  and the cell modules $\cell  {A^F} {} \la$ are the simple $A^F$ modules.
\end{enumerate}
For example, the generic ground ring for the Iwahori--Hecke algebra is $\Z[\boldq, \boldq\inv]$, where $\boldq$ is an indeterminant over $\Z$.  

Indeed, the entire point of the theory of cellular algebras is to provide a setting for a modular representation theory of important classes of algebras such as the Iwahori--Hecke algebras, Brauer algebras, Birman Murakami Wenzl algebras, etc.   The cell modules of $A^R$ are integrally defined versions of the simple modules of $A^F$ which specialize to $A^k$--modules  for any field $k$ (with appropriate parameters).   The simple $A^k$  modules are found as quotients of the cell modules $\cell  {A^k} {} \la$.   See ~\cite{MR1376244, MR1711316} for details.

\subsection{Equivalent cellular bases}
A cellular algebra $A$ with cell datum $(A,*,\hat{A},\unrhd,\mathscr{A})$  always admits different cellular bases $\mathscr B$. In fact,  any choice of an $R$--basis in each cell module of $A$ can be globalized to a cellular basis of $A$, see \hyperref[lemma:  globalizing bases of cell modules]{Lemma~\ref*{lemma:  globalizing bases of cell modules}}.   We say that a cellular basis 
$$\mathscr B = \leftbrace b^\la_{\mfs \mft} \ \big \vert \   \la \in \hat A \text{ and }  \mfs, \mft \in \hat A^\la \rightbrace
$$ 
is {\em equivalent} to the original cellular basis $\mathscr A$ if it determines the same ideals $A^{\unrhd \la}$ and the same cell modules as does $\mathscr A$.  More precisely, the requirement is that
\begin{enumerate}
\item for all $\la \in \hat A$, 
$$
A^{\unrhd \la} = \Span \big\{b^\mu_{\mfs \mft}  \ \big \vert \  \mu \unrhd \la \text{ and }  \mfs, \mft \in \hat A^\mu     \big\},  \text{ and}
$$
\item  for all $\la \in \hat A$ and all $\mfs \in \hat A^\la$, 
$$
\Span \big\{b^\la_{\mfs \mft} + A^{\rhd \la} \ \big \vert \  \mft \in \hat A^\la \big\} \cong \cell A {} \la,
$$
as right $A$--modules.
\end{enumerate}

\begin{lemma} \label{an isomorphism for cell modules 1}  Let $A$ be a cellular algebra and let $\la \in \hat A$.   Let $b \in \cell A {} \la$ be non--zero.  Then $x \mapsto b^* \otimes x$ is an $A$--module isomorphism of $\cell A {} \la$ onto $b^* \otimes \cell A {} \la \subseteq (\cell A {} \la)^* \otimes_R \cell A {} \la$.
\end{lemma}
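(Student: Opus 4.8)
The plan is to verify directly that $\phi\colon x\mapsto b^*\otimes x$ respects the right $A$-action, and then to deduce injectivity from the fact that each cell module is free over the integral domain $R$. The map $\phi$ is manifestly well defined, being $b^*\otimes(-)$ for the fixed element $b^*\in(\cell A{}\la)^*$, and it is $R$-linear since $\otimes_R$ is $R$-bilinear.

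First I would record the homomorphism property. With the bimodule conventions fixed earlier in this section, the right $A$-action on $(\cell A{}\la)^*\otimes_R\cell A{}\la$ (transported from $A^{\unrhd\la}/A^{\rhd\la}$ via $\alpha_\la$) operates on the second tensor factor, so that $(m^*\otimes n)\cdot a=m^*\otimes(na)$. Hence for $a\in A$ and $x\in\cell A{}\la$ we have $\phi(xa)=b^*\otimes(xa)=(b^*\otimes x)\cdot a=\phi(x)\cdot a$. In particular $b^*\otimes\cell A{}\la$ is closed under the right $A$-action, so it is a right $A$-submodule of $(\cell A{}\la)^*\otimes_R\cell A{}\la$, and $\phi$ is by construction a surjection onto it.

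It remains to prove injectivity, and here I would pass to bases. Since $\cell A{}\la$ is $R$-free on $\{c^\la_\mft:\mft\in\hat A^\la\}$, so is $(\cell A{}\la)^*$ on the marked basis $\{(c^\la_\mft)^*\}$, and therefore $(\cell A{}\la)^*\otimes_R\cell A{}\la$ is $R$-free on $\{(c^\la_\mfu)^*\otimes c^\la_\mfv\}$. Write $b=\sum_\mft r_\mft\,c^\la_\mft$ with not all $r_\mft$ zero and fix $\mfu$ with $r_\mfu\neq0$; expand an arbitrary $x=\sum_\mfv s_\mfv\,c^\la_\mfv$. Then $b^*\otimes x=\sum_{\mft,\mfv}r_\mft s_\mfv\,(c^\la_\mft)^*\otimes c^\la_\mfv$, and if this vanishes then every coefficient vanishes, so in particular $r_\mfu s_\mfv=0$ for all $\mfv$; since $R$ is an integral domain and $r_\mfu\neq0$, this forces $s_\mfv=0$ for all $\mfv$, i.e. $x=0$.

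I do not expect a genuine obstacle: the argument is essentially bookkeeping with the bimodule conventions together with one application of freeness. The single point that must not be glossed over is precisely the use of the standing hypothesis that $R$ is an integral domain (together with the $R$-freeness of the cell modules), as this is exactly what excludes the torsion phenomena that would otherwise obstruct injectivity of $\phi$.
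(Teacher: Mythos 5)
Your proof is correct, and the underlying idea is the same as the paper's: injectivity of $x\mapsto b^*\otimes x$ ultimately comes from the $R$-freeness of the cell modules together with $R$ being an integral domain. The presentation is genuinely different, though. The paper compresses the argument into a single abstract chain: since $(\cell A{}\la)^*$ is free, hence torsion-free, the map $r\mapsto rb^*$ gives $R\cong Rb^*$, and then tensoring the chain of identifications $\cell A{}\la\cong R\otimes_R\cell A{}\la\cong Rb^*\otimes_R\cell A{}\la=b^*\otimes\cell A{}\la$ yields the result; this implicitly uses flatness of $\cell A{}\la$ to pass from the submodule $Rb^*\subseteq(\cell A{}\la)^*$ to a submodule of the full tensor product. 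You instead expand in the standard bases and read off injectivity coefficientwise from the domain hypothesis, which is more elementary and makes the role of flatness disappear (the $R$-basis $\{(c^\la_\mft)^*\otimes c^\la_\mfv\}$ of the tensor product is all you need). You also verify explicitly that the map is a right $A$-module homomorphism, which the paper takes as immediate from the bimodule conventions. Both routes are sound; yours is longer but more self-contained, the paper's is slicker but leans on the reader supplying the flatness step.
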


\begin{proof}  Since $(\cell A {} \la)^*$ is a free $R$ module, it is torsion free; hence
$$
\cell A {} \la \cong R \otimes_R \cell A {} \la \cong  R b^* \otimes_R \cell A {} \la  = b^* \otimes \cell A {} \la.
$$
Explicitly, the isomorphism is $x \mapsto b^* \otimes x$.  
\end{proof}

\begin{lemma}[\cite{MR3065998}, Lemma 2.3]  \label{lemma:  globalizing bases of cell modules}
Let $A$ be a cellular algebra with cell datum $(A,*,\hat{A},\unrhd,\mathscr{A})$.  
For each $\la \in \hat A$,  fix  an $A$--$A$--bimodule isomorphism $\beta_\la : A^{\unrhd \la}/A^{\rhd \la} \to (\cell A {} \la)^* \otimes_R \cell A {} \la$ satisfying $*\circ \beta_\la = \beta_\la\circ *$.  
For each
$\la \in \hat A$,  let $\{ b_\mft \,\mid\, \mft \in \hat A^\la \}$  be an $R$--basis of the cell module $\cell A {} \la$.    For each $\la \in \hat A$ and each $\mfs, \mft \in \hat A^\la$,  let
$b^\la_{\mfs \mft}$ be a lifting of $\beta_\la\inv(b_\mfs^* \otimes b_\mft)$ in $A^{\unrhd \la}$.  Then
$$
\mathscr B = \leftbrace b^\la_{\mfs \mft}   \ \big \vert \  \la \in \hat A \text{ and }  \mfs, \mft \in \hat A^la \rightbrace
$$
is a cellular basis of $A$ equivalent to the original cellular basis $\mathscr A$.  
\end{lemma}

\subsection{Extensions of cellular algebras}
\begin{definition}  Suppose $A$ is a unital $R$--algebra with involution $*$, and $J$ is an $*$--invariant ideal. 
Let us say that $J$ is a {\em cellular ideal} in $A$ if it satisfies the axioms for a  cellular algebra (except for being unital) with cellular basis 
$ \{ c_{\mfs \mft}^\la  \,\mid\,  \la \in  \hat J \text{ and }  \mfs, \mft \in  \hat J^\la \} \subseteq J$
and we have, as in point (3a) of the definition of cellularity, 
\begin{align} \label{equation: condition for cellular ideal}
c_{\mfs  \mft}^\la a  \equiv \sum_v  r_\mathfrak{v}(\mft, a)  c_{\mfs \mfv}^\la  \mod  J^{\rhd \la}
\text{ for all } a \in A,
\end{align}
not only for $a \in J$.
\end{definition}

\begin{lemma} \label{lemma: extensions of cellular algebras} (Extensions of cellular algebras)  Let $A$ be an algebra with involution over an integral domain $R$.  Suppose that $J$ is a cellular ideal in $A$ and $A/J$ is a cellular algebra with the involution induced from $A$.   Then $A$ is a cellular algebra.
\end{lemma}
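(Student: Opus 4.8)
The plan is to build a cell datum for $A$ directly from the given cell data for the ideal $J$ and the quotient $A/J$. Let $J$ have cell datum with poset $(\hat J, \unrhd_J)$ and cellular basis $\{c^\la_{\mfs\mft} : \la \in \hat J,\ \mfs,\mft \in \hat J^\la\}$, and let $\bar A = A/J$ have cell datum with poset $(\widehat{\bar A}, \unrhd_{\bar A})$ and cellular basis $\{\bar c^\mu_{\mfu\mfv} : \mu \in \widehat{\bar A},\ \mfu,\mfv \in \widehat{\bar A}^\mu\}$. First I would set $\hat A = \hat J \sqcup \widehat{\bar A}$ as sets, with $\hat A^\la = \hat J^\la$ for $\la \in \hat J$ and $\hat A^\mu = \widehat{\bar A}^\mu$ for $\mu \in \widehat{\bar A}$. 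The partial order $\unrhd$ on $\hat A$ restricts to the given orders on each piece, and in addition I declare $\mu \rhd \la$ for every $\mu \in \widehat{\bar A}$ and every $\la \in \hat J$ (so the whole quotient part sits strictly above the whole ideal part). One checks this is a partial order. For the basis $\mathscr A$: the elements indexed by $\la \in \hat J$ are the $c^\la_{\mfs\mft} \in J$ already given; the elements indexed by $\mu \in \widehat{\bar A}$ are chosen lifts $c^\mu_{\mfu\mfv} \in A$ of the $\bar c^\mu_{\mfu\mfv}$, which may be chosen $*$-compatibly so that $(c^\mu_{\mfu\mfv})^* \equiv c^\mu_{\mfv\mfu} \bmod J$ using that the involution on $\bar A$ is induced from $A$. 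Since $\mathscr A$ restricted to $J$ is an $R$-basis of $J$ and the images of the lifts form an $R$-basis of $A/J$, the union is an $R$-basis of $A$.

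Next I would verify the cellularity axioms \ref{c-d-1} and \ref{c-d-2} for this datum. The key observation is that with the chosen order, $A^{\rhd\la}$ for $\la \in \hat J$ equals $J^{\rhd_J\la}$ plus the $R$-span of \emph{all} quotient-part basis elements, which is exactly $J^{\rhd_J\la} + $ (a complement of $J$) — but more usefully, modulo $A^{\rhd\la}$ one is working inside $J/J^{\rhd_J\la}$, and modulo $A^{\rhd\mu}$ for $\mu \in \widehat{\bar A}$ one is working in a quotient of $A/J$. For $\la \in \hat J$ and $a \in A$, the straightening relation \eqref{r-act} is precisely the hypothesis \eqref{equation: condition for cellular ideal} defining a cellular ideal, which was imposed for \emph{all} $a \in A$, not just $a \in J$ — this is exactly why the cellular-ideal hypothesis is stated in that strong form. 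For $\mu \in \widehat{\bar A}$ and $a \in A$, one reduces mod $J$: the product $\bar c^\mu_{\mfu\mfv}\bar a$ straightens in $\bar A$ modulo $\bar A^{\rhd_{\bar A}\mu}$, and lifting this relation to $A$ introduces an error term in $J$ together with a combination of higher quotient-part basis elements; since every element of $J$ is an $R$-combination of $c^\la_{\mfs\mft}$ with $\la \in \hat J$ and all such $\la$ satisfy $\mu \rhd \la$, the entire error lies in $A^{\rhd\mu}$. Thus \eqref{r-act} holds. Axiom \ref{c-d-2} follows for the $\hat J$-part from cellularity of $J$ and for the $\widehat{\bar A}$-part from the $*$-compatible choice of lifts, again because the discrepancy lands in $J \subseteq A^{\rhd\mu}$.

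The main obstacle — really the only subtle point — is bookkeeping the two layers of ideals and being sure that the order $\unrhd$ on $\hat A$ is chosen so that (i) it is genuinely a partial order, (ii) the $R$-module $A^{\rhd\la}$ it determines is a two-sided ideal with $A/A^{\rhd\la}$ controlled by $J/J^{\rhd_J\la}$, and (iii) all "error terms in $J$" that arise when lifting quotient relations are automatically swallowed by $A^{\rhd\mu}$. This last is handled by the design choice that the quotient part of the poset lies entirely above the ideal part; with that in place every verification reduces to a relation already known in $J$ or in $A/J$ plus a controlled $J$-valued correction. I would also remark that this lemma is essentially \cite{GG:2011}, but the present formulation with the explicit strong axiom \eqref{equation: condition for cellular ideal} makes the lifted basis $\mathscr A$ completely explicit, which is what is needed for the path-basis constructions later in the paper.
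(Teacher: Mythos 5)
Your overall strategy is the same as the paper's — take $\hat A = \hat J \sqcup \widehat{A/J}$, use the given $\hat J$-indexed basis inside $J$, lift the $\widehat{A/J}$-indexed basis to $A$, and invoke the strong form of the cellular-ideal condition \eqref{equation: condition for cellular ideal} to straighten against all of $A$. However, you have chosen the partial order on $\hat A$ the wrong way around, and this is not a cosmetic issue: it breaks the verification of axiom (3a) for labels coming from the quotient.

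Recall that in the cell datum convention of Definition \ref{c-d}, $A^{\rhd\mu}$ is the span of $c^\nu_{\mathfrak{st}}$ with $\nu\rhd\mu$, so it consists of basis elements indexed by labels \emph{strictly above} $\mu$; the associated two-sided ideals $A^{\unrhd\la}$ then shrink as $\la$ increases. Since $J$ is supposed to be one of these ideals, $\hat J$ must be an \emph{up-set} in $\hat A$, i.e. the ideal-part labels must dominate the quotient-part labels. That is exactly the paper's choice: $\mu\rhd\la$ for all $\mu\in\hat J$ and $\la\in\widehat{A/J}$. You declare the opposite, and then your key step — ``all such $\la$ satisfy $\mu\rhd\la$, [so] the entire error lies in $A^{\rhd\mu}$'' — uses the implication $\mu\rhd\la\Rightarrow c^\la_{\mathfrak{st}}\in A^{\rhd\mu}$, which is false: membership in $A^{\rhd\mu}$ requires $\la\rhd\mu$, not $\mu\rhd\la$, and in a partial order both cannot hold. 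Concretely, with your ordering one gets $A^{\rhd\mu}\cap J=0$ for $\mu\in\widehat{A/J}$, so the $J$-valued error terms produced by lifting the straightening relation $\bar c^\mu_{\mathfrak{uv}}\bar a\equiv\cdots$ from $A/J$ are \emph{not} absorbed modulo $A^{\rhd\mu}$, and axiom (3a) fails. Reversing the order — declaring $\la\rhd\mu$ for $\la\in\hat J$, $\mu\in\widehat{A/J}$, so that $J\subseteq A^{\rhd\mu}$ for every quotient label $\mu$, while $A^{\rhd\la}=J^{\rhd\la}$ for every ideal label $\la$ — repairs the proof and recovers the paper's argument.
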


\begin{proof}  Write $H = A/J$.  Denote the cell datum of $J$ by $(J, *, \unrhd, \hat J, \mathscr J)$ and that of $H$ by $(H, *, \unrhd, \widehat H, \mathscr H)$.    We define a cell datum for $A$:  The partially ordered set $\hat A$ is $\hat J \cup \widehat H$ with the original partial orders on $\hat J$ and $\widehat H$ and
$\mu \rhd \la$ for all $\mu \in \hat J$ and $\la \in \widehat H$.  For $\mu \in \hat J$, we take $\hat A^\mu = \hat J^\mu$ and for $\la \in \widehat H$, we take $\hat A^\la = \widehat H^\la$.
For each  $\la \in \widehat H$ and each pair
$\mfs, \mft \in \widehat H^\la$,  let $h^\la_{\mfs \mft}$ be the corresponding element of $\mathscr H$, and let 
$\bar h^\la_{\mfs \mft}$ be any lifting of $h^\la_{\mfs \mft}$ in $A$.  Let $\overline{\mathscr H}$ be the set of all such elements $\bar h^\la_{\mfs \mft}$.  Then it is straightforward  to check that
$\mathscr A = \mathscr J \cup \overline{\mathscr H}$ is a cellular basis of $A$.    
\end{proof}

\begin{remark} \label{remark on extensions of cellular algebras}
We make some useful observations regarding the situation of \hyperref[lemma: extensions of cellular algebras]{Lemma~\ref*{lemma: extensions of cellular algebras}}:  Let $\pi: A \to A/J = H$ be the canonical map.  

For $\la \in \widehat H$,  the following statements hold:  $A^{\unrhd \la} = \pi\inv(H^{\unrhd \la})$, and likewise $A^{\rhd \la} = \pi\inv(H^{\rhd \la})$. 
Consequently, $J \subseteq  A^{\rhd \la} $ for all $\la \in \widehat H$.
We have
$A^{\unrhd \la}/A^{\rhd \la} \cong H^{\unrhd \la}/H^{\rhd \la}$  via $a + A^{\rhd \la}  \mapsto 
\pi(a) + H^{\rhd \la}$.   The cell modules $\cell A {} \la$ and $\cell H {} \la$ can be identified (by
$x a = x \pi(a)$ for $x \in \cell H {} \la$ and $a \in A$.)  The map
$\alpha_\la^A : A^{\unrhd \la}/A^{\rhd \la} \to (\cell A {} \la)^* \otimes_R \cell A {} \la$ is
\begin{align} \label{equation: alpha la A versus alpha lambda H}
\alpha_\la^A: a + A^{\rhd \la} \mapsto  \alpha_\la^H(\pi(a) + H^{\rhd \la}).
\end{align}
The ideals
$A^{\unrhd \la}$ and $A^{\rhd \la}$ and the maps $\alpha_\la^A$ are independent of the choice of the liftings $\bar h^\la_{\mfs \mft}$ of the cellular basis $\mathscr H$ of $H$.  

For $\mu \in \hat J$,  the cell modules $\cell A {} \mu$ and $\cell J {} \mu$ can be identified;  this is because of condition~\eqref{equation: condition for cellular ideal} in the definition of cellular ideals. 
We have $A^{\unrhd \mu} = J^{\unrhd \mu}  \subseteq J$, and similarly for $A^{\rhd \mu}$.
\end{remark}

\subsection{Cellular algebras with cyclic cell modules}
\label{subsection:  cellular algebras with cyclic cell modules}

\begin{definition} \label{definition:  cyclic cellular}
 A  cellular algebra is said to be {\em cyclic cellular} if every cell module is cyclic.
\end{definition}

\begin{remark}  For examples of cyclic cellular algebras, see \hyperref[section: applications]{Section~\ref*{section: applications}}.  Cyclic cellularity was also introduced in ~\cite{MR3065998}, and some additional examples, beyond those studied here are presented in that paper.
\end{remark}

\begin{lemma}[~\cite{MR3065998}, Lemma 2.5] \label{lemma: equivalent conditions for cyclic cellular algebra}
Let $A$ be a  cellular algebra with cell datum $(A,*,\hat{A},\unrhd,\mathscr{A})$.  The following are equivalent:
\begin{enumerate}
\item  $A$ is cyclic cellular.
\item  For each $\la \in \hat A$,  there exists an element $c_\la \in A^{\unrhd \la}$ with the properties:
\begin{enumerate}
\item $c_\la \equiv c_\la^* \mod{A^{\rhd \la}}$.
\item $A^{\unrhd \la} =  A c_\la A +  A^{\rhd \la}$.
\item    $(c_\la A + A^{\rhd \la})/A^{\rhd \la} \cong \cell A {} \la$, as right $A$--modules. 
\end{enumerate}
\end{enumerate}
\end{lemma}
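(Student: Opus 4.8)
The plan is to prove the two implications separately, using the identification $\alpha_\la : A^{\unrhd\la}/A^{\rhd\la} \xrightarrow{\sim} (\cell A {} \la)^* \otimes_R \cell A {} \la$ as the main bookkeeping device throughout.

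For the implication $(2) \Rightarrow (1)$, suppose for each $\la$ we are given $c_\la \in A^{\unrhd\la}$ as in (a), (b), (c). Condition (c) says directly that the right $A$--module $(c_\la A + A^{\rhd\la})/A^{\rhd\la}$ is isomorphic to $\cell A {} \la$. But this quotient module is visibly cyclic, generated by the image of $c_\la$. Hence $\cell A {} \la$ is cyclic. Since $\la \in \hat A$ was arbitrary, $A$ is cyclic cellular.

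For the implication $(1) \Rightarrow (2)$, fix $\la \in \hat A$ and assume $\cell A {} \la$ is cyclic, say generated by an element $b \in \cell A {} \la$; note $b \neq 0$ since cell modules are nonzero free $R$--modules. I would produce $c_\la$ by transporting the rank--one tensor $b^* \otimes b$ back through $\alpha_\la$: choose $c_\la \in A^{\unrhd\la}$ to be any lifting of $\alpha_\la\inv(b^* \otimes b) \in A^{\unrhd\la}/A^{\rhd\la}$. Property (a) is then immediate from $*\circ \alpha_\la = \alpha_\la \circ *$ together with the identity $(b^* \otimes b)^* = b^* \otimes b$ (valid under the identifications recalled in the preliminaries), so $c_\la + A^{\rhd\la}$ is $*$--fixed, i.e. $c_\la \equiv c_\la^* \bmod A^{\rhd\la}$. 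For (c): $\alpha_\la$ carries $(c_\la A + A^{\rhd\la})/A^{\rhd\la}$ onto $(b^* \otimes b)A = b^* \otimes (bA) = b^* \otimes \cell A {} \la$, the last equality because $b$ generates $\cell A {} \la$; and by Lemma \ref{an isomorphism for cell modules 1}, $b^* \otimes \cell A {} \la \cong \cell A {} \la$ as right $A$--modules, so (c) follows since $\alpha_\la$ is an $A$--$A$--bimodule isomorphism. For (b): applying $\alpha_\la$ to $(Ac_\la A + A^{\rhd\la})/A^{\rhd\la}$ gives the $A$--$A$--subbimodule of $(\cell A {} \la)^* \otimes_R \cell A {} \la$ generated by $b^* \otimes b$; using that $b$ generates $\cell A {} \la$ as a right module and hence $b^*$ generates $(\cell A {} \la)^*$ as a left module, this subbimodule is all of $(\cell A {} \la)^* \otimes_R \cell A {} \la$, so $Ac_\la A + A^{\rhd\la} = A^{\unrhd\la}$.

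The one point requiring genuine care — and the step I expect to be the main obstacle — is the computation in (b) that $A(b^* \otimes b)A$ fills out the entire tensor product $(\cell A {} \la)^* \otimes_R \cell A {} \la$ over the integral domain $R$, rather than merely an $R$--submodule of full rank. The left action reaches every $(\cell A {} \la)^* \otimes b$ and the right action every $b^* \otimes \cell A {} \la$, but one must check that products of these, and $R$--linear combinations thereof, recover all of $(\cell A {} \la)^* \otimes_R \cell A {} \la$; this works because if $\{b = b_1, b_2, \ldots\}$ extends to a generating (in fact spanning) set and $b$ generates the module, then for each pair of basis-like elements one can write $b_i^* \otimes b_j$ as $b^* \otimes b$ acted on appropriately, using that the bimodule action on a rank-one tensor $u^* \otimes v$ is $a(u^* \otimes v)a' = (u a^*)^* \otimes (v a')$. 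Everything else is formal manipulation with the functorial properties of $*$ and the bimodule isomorphism $\alpha_\la$ recorded earlier in the preliminaries.
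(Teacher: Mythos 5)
Your proof is correct and follows essentially the same route as the paper's: choose a generator $b$ of $\cell A {} \la$, lift $\alpha_\la\inv(b^*\otimes b)$ to get $c_\la$, verify (a) via $*$--equivariance of $\alpha_\la$, verify (b) via $A(b^*\otimes b)A=(\cell A {} \la)^*\otimes_R\cell A {} \la$, and verify (c) via Lemma~\ref{an isomorphism for cell modules 1}. Your added worry about ``full rank versus equality'' in step (b) is unnecessary --- writing $b_i=bw_i$ for each member of an $R$--basis and computing $w_i^*(b^*\otimes b)w_j=b_i^*\otimes b_j$ gives equality outright --- but the underlying argument you sketch is exactly right, and indeed supplies the detail the paper leaves implicit.
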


For the remainder of this section (and commonly in the rest of the paper as well) 
 we will  adopt the following notation.   For a cyclic cellular algebra $A$ and $\la \in \hat A$,  we let   $\generator A \la$  denote a generator of the cell module $\cell A {} \la$, and
$c_\la$  a lifting to $A^{\unrhd \la}$ of $\alpha\inv((\generator A \la)^* \otimes \generator A \la)$.  
When it is necessary to identify the algebra we are working in, we write $c_\la^A$.

Let $A$ be a cyclic cellular algebra with cell datum $(A,*,\hat{A},\unrhd,\mathscr{A})$.  For each 
$\la \in \hat A$,  let $\big\{c_\mft^\la   \,\mid\,  \mft \in \hat A^\la\big\}$ be the standard basis of the cell module
$\cell A {} \la$  derived from the cellular basis $\mathscr A$ of $A$.  Since $\cell A {} \la$ is cyclic, for each
$\mft \in \hat A^\la$, there exists $v_\mft \in  A$ such that $c_\mft^\la = \generator A \la v_\mft$.  

\begin{lemma} \mbox{}  \label{lemma:  cellular basis for cyclic cellular algebra}
\begin{enumerate}
\item  For  $\la \in \hat A$ and $\mfs, \mft \in \hat A^\la$, we have 
$$
v_\mfs^* c_\la v_\mft \equiv c_{\mfs \mft}^\la \mod{A^{\rhd \la}}.
$$
\item The set $ \{ v_\mfs^* c_\la v_\mft   \,\mid\,   \la \in \hat A, \mfs, \mft \in \hat A^\la   \}$ is a cellular basis of $A$ equivalent to the original cellular basis $\mathscr A$ of $A$.  
\end{enumerate}
\end{lemma}

\begin{proof}  Point (1) holds because both $v_\mfs^* c_\la v_\mft$ and $c_{\mfs \mft}^\la$ are liftings to 
$A^{\unrhd \la}$ of $\alpha_\la\inv((c_\mfs^\la)^* \otimes c_\mft^\la)$.   Point (2) follows from (1).
\end{proof}

We record a version of \hyperref[lemma:  globalizing bases of cell modules]{Lemma~\ref*{lemma:  globalizing bases of cell modules}} that is adapted to the context of cyclic cellular algebras:

\begin{lemma}  \label{lemma: globalizing bases of cell modules -- cyclic case}
Let $A$ be a cyclic cellular algebra with cell datum $(A,*,\hat{A},\unrhd,\mathscr{A})$.
For each $\la \in \hat A$, let $\{b_\mft  \,\mid\, \mft \in \hat A^\la\}$ be an  $R$--basis of $\cell A {} \la$.
For $\mft \in \hat A^\la$,  choose $w_\mft \in A$ such that $b_\mft = \generator A \la w_\mft$.  For $\mfs, \mft \in \hat A^\la$,  let  $$b^\la_{\mfs \mft} =  (w_\mfs)^* c_\la w_\mft.$$  Then
$\mathscr B = \{ b^\la_{\mfs \mft}   \,\mid\,  \la \in \hat A \text{ and }  \mfs, \mft \in \hat A^\la  \}$ is a cellular basis of $A$ equivalent to  the original cellular basis $\mathscr A$.
\end{lemma}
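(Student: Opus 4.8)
The plan is to recognise each $b^\la_{\mfs \mft}$ as a lifting of $\alpha_\la\inv(b_\mfs^* \otimes b_\mft)$ in $A^{\unrhd \la}$, and then to invoke Lemma~\ref{lemma:  globalizing bases of cell modules} with the specific choice $\beta_\la = \alpha_\la$; recall that $\alpha_\la$ does satisfy $* \circ \alpha_\la = \alpha_\la \circ *$, so it is a permissible choice of bimodule isomorphism there. Everything then comes down to one computation inside the bimodule $(\cell A {} \la)^* \otimes_R \cell A {} \la$.

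First I would note that $b^\la_{\mfs \mft} = (w_\mfs)^* c_\la w_\mft$ lies in $A^{\unrhd \la}$, since $c_\la \in A^{\unrhd \la}$ and $A^{\unrhd \la}$ is a two--sided ideal of $A$. Next, the main step: by the conventions fixed after Lemma~\ref{lemma: equivalent conditions for cyclic cellular algebra}, $c_\la$ is a lifting to $A^{\unrhd \la}$ of $\alpha_\la\inv\bigl((\generator A \la)^* \otimes \generator A \la\bigr)$, i.e.\ $\alpha_\la(c_\la + A^{\rhd \la}) = (\generator A \la)^* \otimes \generator A \la$. Since $\alpha_\la$ is an $A$--$A$ bimodule isomorphism,
$$
\alpha_\la\bigl(b^\la_{\mfs \mft} + A^{\rhd \la}\bigr) = (w_\mfs)^* \cdot \bigl((\generator A \la)^* \otimes \generator A \la\bigr) \cdot w_\mft .
$$
Now I would unwind the bimodule structure on $(\cell A {} \la)^* \otimes_R \cell A {} \la$: the right $A$--action acts on the second tensorand, while the left $A$--action acts on the first tensorand via $a \cdot x^* = (x a^*)^*$ (the functor $*$ applied to the right module $\cell A {} \la$). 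Hence the right--hand side equals
$$
\bigl(\generator A \la\, w_\mfs\bigr)^* \otimes \bigl(\generator A \la\, w_\mft\bigr) = b_\mfs^* \otimes b_\mft ,
$$
where I used $(w_\mfs)^{**} = w_\mfs$ together with $b_\mfs = \generator A \la\, w_\mfs$ and $b_\mft = \generator A \la\, w_\mft$. Thus $b^\la_{\mfs \mft}$ is a lifting of $\alpha_\la\inv(b_\mfs^* \otimes b_\mft)$ in $A^{\unrhd \la}$, exactly the input required by Lemma~\ref{lemma:  globalizing bases of cell modules}.

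With this identification, Lemma~\ref{lemma:  globalizing bases of cell modules}, applied with $\beta_\la = \alpha_\la$ and the given bases $\{b_\mft : \mft \in \hat A^\la\}$ of the cell modules, directly yields that $\mathscr B = \{b^\la_{\mfs \mft}\}$ is a cellular basis of $A$ equivalent to $\mathscr A$. I expect the only delicate point to be the bookkeeping in the displayed computation --- in particular getting the twist by $*$ in the left action on $(\cell A {} \la)^*$ the right way round --- but once the conventions from the preliminaries are in place this is purely formal. It is also worth remarking that the output does not depend on the choices of the elements $w_\mft$ (nor on the lifting implicit in $c_\la$), since $\alpha_\la(b^\la_{\mfs \mft} + A^{\rhd \la})$ is determined irrespective of them; indeed the present statement is the common generalisation of Lemma~\ref{lemma:  cellular basis for cyclic cellular algebra} (the case $b_\mft = c^\la_\mft$, $w_\mft = v_\mft$) and of Lemma~\ref{lemma:  globalizing bases of cell modules}, proved by the same argument as the former.
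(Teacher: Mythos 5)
Your proof is correct and takes exactly the same route as the paper: identify $b^\la_{\mfs\mft}$ as a lifting in $A^{\unrhd\la}$ of $\alpha_\la\inv(b_\mfs^*\otimes b_\mft)$ and then invoke Lemma~\ref{lemma: globalizing bases of cell modules} with $\beta_\la=\alpha_\la$. The only difference is that the paper states the lifting identity without verification, while you carry out the (correct) bimodule computation, including the twist in the left action on $(\cell A{}\la)^*$.
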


\begin{proof}   For each $ \la \in \hat A$  and  $\mfs, \mft \in \hat A^\la$, 
$b^\la_{\mfs \mft}$ is a lifting in $A^{\unrhd \la}$ of $\alpha_\la\inv(b_\mfs^* \otimes b_\mft)$,   so this follows immediately from \hyperref[lemma:  globalizing bases of cell modules]{Lemma~\ref*{lemma:  globalizing bases of cell modules}}.
\end{proof}

\begin{remark}  \label{remark: extensions of cyclic cellular algebras} (Extensions of cyclic cellular algebras)  Let $A$ be an algebra with involution over $R$,  let $J$ be a cellular ideal in $A$ and suppose that $H = A/J$ is cellular.  If both $J$ and $H$ are cyclic cellular, then so is $A$.  This is evident from \hyperref[lemma: extensions of cellular algebras]{Lemma~\ref*{lemma: extensions of cellular algebras}}
and \hyperref[remark on extensions of cellular algebras]{Remark~\ref*{remark on extensions of cellular algebras}}.   

Let $\pi: A \to A/J = H$ denote the quotient map.
For each $\la \in \widehat H$, let $\generator H \la$ be a generator of the cell module $\cell H {} \la = \cell A {} \la$.  
Let $c_\la^H \in H^{\unrhd\la}$ satisfy $\alpha_\la^H(c_\la^H + H^{\rhd\la}) = ( \generator H \la)^* \otimes \generator H \la$.

Let $c_\la^A \in \pi\inv(c_\la^H)$.   Then $c_\la^A \in 
\pi\inv(H^{\unrhd\la}) = A^{\unrhd\la}$. Moreover, it
 follows from the description of 
$\alpha_\la^A$ in Equation~\eqref{equation: alpha la A versus alpha lambda H}, that 
$\alpha_\la^A(c_\la^A + A^{\rhd \la}) =  ( \generator H \la)^* \otimes \generator H \la$.
 Thus, by \hyperref[lemma: equivalent conditions for cyclic cellular algebra]{Lemma~\ref*{lemma: equivalent conditions for cyclic cellular algebra}}, 
$A^{\unrhd \la} =  A c_\la^A A + A^{\rhd \la}$, and $c_\la^A a + A^{\rhd \la} \mapsto \generator H \la a$
is an isomorphism of $(c_\la^A A + A^{\rhd \la})/A^{\rhd \la}$ to $\cell H {} \la = \cell A {} \la$.  
\end{remark}

\section{Bases in towers of cellular algebras}  \label{section: bases in coherent towers}

In this section we obtain some elementary results on bases in towers of cellular algebras.  The main results can be summarized as follows.  Consider an increasing sequence of cellular algebras $(H_n)_{n\ge 0}$ over an integral domain $R$ with field of fractions $F$.  Suppose that
 \begin{enumerate}
 \item \label{semicoherent + cyclic 1} $H_0 = R$, and $H_n^F = H_n \otimes_R F$ is split semisimple for all $n$.
  \item \label{semicoherent + cyclic 2}  For each $n \ge 0$ and each cell module $\Delta$ of $ H_{n+1}$,  the $H_n$--module  $\Res^{H_{n+1}}_{H_{n}} (\Delta)$ has an order preserving cell filtration, see 
  \hyperref[definition: cell filtration]{Definition~\ref*{definition: cell filtration}}
 \item \label{semicoherent + cyclic 3}  $H_n$ is cyclic cellular for all $n$.   
 \end{enumerate}
Then one can associate to each edge $\la \to \mu$ in the 
 branching diagram $\widehat H$ for the tower $(H_N^F)_{n \ge 0}$  of split semisimple algebras a ``branching factor" $d_{\la \to \mu}$.  The ordered product of these branching factors along paths in $\widehat H$  determines a basis of each cell module of each algebra $H_n$.   The collection of these bases is a ``family of path bases,"   which means  that the bases behave well with respect to restriction to smaller algebras in the tower, see \hyperref[definition: path bases]{Definition~\ref*{definition: path bases}}. The existence of these special bases of the cell modules depends on the existence of cell filtrations for the restricted modules $\Res^{H_{n+1}}_{H_{n}} (\Delta)$;  conversely, any family of path bases determines cell filtrations of each restricted module  $\Res^{H_{n+1}}_{H_{n}} (\Delta)$.

\subsection{Coherence conditions for towers of cellular algebras} 
\label{subsection:  coherence conditions for towers}

If $A$ is a cellular algebra over $R$, $\lambda\in\hat{A}$, and $N\subseteq M$ is an inclusion of right $A$--modules, write  $N\stackrel{\lambda}{\subseteq}M$ if $ M/N\cong  \cell A {} \la $  as right $A$--modules.

\begin{definition} \label{definition: cell filtration}  Let $A$ be a cellular algebra with cell datum
$(A, *, \unrhd, \hat A, \mathscr A)$.  
If $M$ is a right $A$--module, a  {\em cell filtration}  of  $M$  is a filtration by right $A$--modules
\begin{align*}
\{0\}= M_0\stackrel{\lambda^{(1)}}{\subseteq}  M_1\stackrel{\lambda^{(2)}}{\subseteq}\cdots\stackrel{\lambda^{(r)}}{\subseteq}   M_r=M,
\end{align*}
with subquotients isomorphic to cell modules.  Say that the filtration is {\em order preserving} if  $\lambda^{(s)}\rhd\lambda^{(t)}$ in $\hat{A}$ whenever $s < t $. 
\end{definition}

Observe  that all the modules occurring in a  cell filtration are necessarily free as $R$--modules.
Evidently, a given cell module occurs at most once as a subquotient in an order preserving cell filtration.

\medskip
Here and in the remainder of the paper, we will consider  increasing sequences
$$
H_0 \subseteq H_1 \subseteq H_2 \cdots
$$
of cellular algebras over an integral domain $R$. Whenever we have such a sequence of algebras, we 
 assume that all the inclusions are unital and that the involutions are consistent; that is the involution on $H_{i+1}$, restricted to $H_i$, agrees with the involution on $H_i$.    

\begin{definition}[\cite{MR2794027,MR2774622}]  The tower of cellular algebras $(H_i)_{i \ge 0}$ is {\em coherent}  if the following conditions are satisfied:
\begin{enumerate} 
\item  For each $i\ge 0$ and each cell module $\Delta$ of  $H_i$,  the induced module $\Ind_{H_i}^{H_{i+1}}(\Delta)$   has  cell filtration. 
\item  For each $i \ge 0$ and each  cell module $\Delta$ of $H_{i+1}$ the restricted module $\Res_{H_i}^{H_{i+1}}(\Delta)$  has a cell filtration. 
\end{enumerate}
The tower is called {\em strongly coherent} if the cell filtrations can be chosen to be order preserving.
\end{definition}

In the examples of interest to us, we will also have {\em uniqueness of the multiplicities} of the cell modules appearing as subquotients of the cell filtrations, and {\em Frobenius reciprocity} connecting the multiplicities in the two types of filtrations, see 
\hyperref[corollary:  multiplicities in cell filtrations 3]{Corollary~\ref*{corollary:  multiplicities in cell filtrations 3}}.

Only the filtrations of restricted modules $\Res_{H_i}^{H_{i+1}}(\Delta)$ play a role in this section,  but the filtrations of induced modules $\Ind_{H_i}^{H_{i+1}}(\Delta)$   also play an essential role in the study of towers of algebras with a Jones basic construction in \hyperref[section:  algebras with basic construction]{Sections~\ref*{section:  algebras with basic construction}} and \ref{section: applications}.

\subsection{Inclusions matrices, branching diagrams, and cell filtrations}  We recall the notion of an {\em inclusion matrix} for an inclusion of split semisimple algebras over a field.
Suppose
$A \subseteq B$ are finite dimensional split semisimple algebras over  a field $F$ (with the same identity element).  Let   $\{V_\la  \,\mid\,   \la \in \hat A  \}$, be the set of isomorphism classes of simple $A$--modules  and  $ \{W_\mu  \,\mid\,   \mu \in \widehat B \}$   the set of isomorphism classes of simple $B$--modules.  
We associate a  $\widehat B \times \hat A$   {\em inclusion matrix}
$\omega$ to the inclusion $A \subseteq B$, as follows.  For each $\mu \in \widehat B$, the $A$--module $\Res_A^B(W_\mu)$   has a direct sum decomposition in simple $A$--modules, with uniquely determined multiplicities, and
$\omega(\mu,\la)$ is defined to be the multiplicity of $V_\la$ in the decomposition of $\Res_A^B(W_\mu)$.   Say that the inclusion $A \subseteq B$ is {\em multiplicity--free} if the inclusion matrix has entries in $\{0, 1\}$.  

Now consider an increasing sequence $(B_n)_{n\ge 0}$  of split semisimple algebras over a field $F$.  The {\em branching diagram $\widehat B$} of the sequence $(B_n)_{n \ge 0}$  is a graph with vertex set
$\coprod_{n \ge 0}  \widehat B_n$,  where $\widehat B_n$ indexes the set of isomorphism  classes simple $B_n$--modules.  Fix $n \ge 0$ and let $\omega_n$ denote the inclusion matrix of $B_n \subseteq B_{n+1}$.  For $\la \in \widehat B_n$ and $\mu \in \widehat B_{n+1}$, the branching diagram has $\omega_n(\mu, \la)$ edges connecting $\la $ and $\mu$.  We write $\la \to \mu$ if
$\omega_n(\mu, \la) \ne 0$.  
In our examples, all the inclusions are multiplicity--free, so a single edge connects $\la$  to $\mu$ when $\la \to \mu$.  Also, in our examples we have $B_0 = F$, so that $\widehat B_0$ is a singleton.

\begin{notation}
Let $R$ be an integral domain with field of fractions $F$.  Let $A$ be a cellular algebra over $R$ and $\Delta$ an $A$--module.  Write $A^F$  for $A \otimes_R F$  and
$\Delta^F$ for $\Delta \otimes_R F$.  
\end{notation}

Let $A$ be a cellular algebra over an integral domain $R$ with field of fractions $F$, and suppose
$A^F$ is split semisimple.  Then $ \{(\Delta_A^\la)^F  \,\mid\, \la \in \hat A   \}$  is  a complete family of simple $A^F$--modules.

 \begin{lemma}[{\cite[Lemma 2.22]{MR2794027} and  \cite[Sect.~2.5]{MR2774622}}] \label{lemma: multiplicities in cell filtrations}
Let $R$ be an integral domain with field of fractions $F$.
 Suppose that $A \subseteq B$ are cellular algebras over $R$ and that
 $A^F$ and $B^F$  are split semisimple.    Let $\omega$ denote the inclusion matrix for $A^F \subseteq B^F$.
 \begin{enumerate}
  \item For any $\la \in \hat A$ and $\mu \in \widehat B$, 
  and any cell filtration of $\Res_{A}^{B}(\cell B {} \mu)$,  the number of subquotients of the filtration isomorphic to  $\cell A {}  \la$ is $\omega(\mu, \la)$.
  
  \item  Likewise, for any $\la \in \hat A$ and $\mu \in \widehat B$, 
 and any cell filtration of $\Ind_{A}^{B}(\cell A {} \la)$,  the number of subquotients of the filtration isomorphic to  $\cell B {}\mu$ is $\omega(\mu, \la)$.

 \end{enumerate}
  \end{lemma}
  
  \begin{corollary} \label{corollary:  multiplicities in cell filtrations 3}
Let $R$ be an integral domain with field of fractions $F$.
 Let $(H_n)_{n \ge 0}$  be a strongly coherent tower of cellular algebras over $R$, and suppose that
 $H_n^F$ is split semisimple for all $n$.  Then for all $n$ and for $\la \in \widehat H_n$ and $\mu \in \widehat H_{n+1}$, 
 the 
 multiplicity of $\cell {H} {n} \la$ in any cell filtration of $\Res_{H_n}^{H_{n+1}}(\cell {H} {n+1} \mu)$ equals the multiplicity of 
$\cell {H} {n+1}\mu$ in any cell filtration of $\Ind_{H_n}^{H_{n+1}}(\cell {H} {n} \la)$.  The multiplicities are independent of the choice of the filtrations.
\end{corollary}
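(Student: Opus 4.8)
The plan is to apply Lemma~\ref{lemma: multiplicities in cell filtrations} to the single inclusion $A = H_n \subseteq H_{n+1} = B$ and simply read off the conclusion. First I would check that the hypotheses of that lemma are met for this pair. By assumption, $H_n^F$ and $H_{n+1}^F$ are split semisimple. Since the tower $(H_n)_{n \ge 0}$ is \emph{strongly} coherent, by definition each induced module $\Ind_{H_n}^{H_{n+1}}(\cell {H} {n} \la)$, for $\la \in \widehat H_n$, and each restricted module $\Res_{H_n}^{H_{n+1}}(\cell {H} {n+1} \mu)$, for $\mu \in \widehat H_{n+1}$, admits an \emph{order preserving} cell filtration. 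Together with the split semisimplicity just noted, these are precisely the hypotheses required by Lemma~\ref{lemma: multiplicities in cell filtrations}. (The inclusion $H_n \subseteq H_{n+1}$ is unital, as required in the definition of the tower, so $H_n^F$ and $H_{n+1}^F$ share an identity and the notion of inclusion matrix makes sense.)

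Next, let $\omega$ denote the inclusion matrix for $H_n^F \subseteq H_{n+1}^F$. By part~(1) of Lemma~\ref{lemma: multiplicities in cell filtrations}, the families $\{(\cell {H} {n} \la)^F : \la \in \widehat H_n\}$ and $\{(\cell {H} {n+1} \mu)^F : \mu \in \widehat H_{n+1}\}$ are complete sets of simple modules for $H_n^F$ and $H_{n+1}^F$ respectively, so that $\omega$ is a $\widehat H_{n+1} \times \widehat H_n$ matrix and $\omega(\mu,\la)$ is meaningful. Then part~(3) of the lemma says that for \emph{any} cell filtration of $\Res_{H_n}^{H_{n+1}}(\cell {H} {n+1} \mu)$ the number of subquotients isomorphic to $\cell {H} {n} \la$ is $\omega(\mu,\la)$, and part~(4) says that for \emph{any} cell filtration of $\Ind_{H_n}^{H_{n+1}}(\cell {H} {n} \la)$ the number of subquotients isomorphic to $\cell {H} {n+1} \mu$ is the same number $\omega(\mu,\la)$. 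Since both multiplicities equal $\omega(\mu,\la)$, they are equal to each other and independent of the chosen filtrations, which is exactly the assertion of the corollary.

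There is essentially no real obstacle: the whole content is carried by Lemma~\ref{lemma: multiplicities in cell filtrations} (and, behind it, by the results of \cite{GG:2011} and \cite{GG:2011b} cited there), and the only thing to verify is the routine bookkeeping that ``strongly coherent'' delivers precisely the order preserving cell filtrations the lemma demands. The one point worth flagging for the reader is that the hypothesis of Lemma~\ref{lemma: multiplicities in cell filtrations} requires only the \emph{existence} of some order preserving cell filtration of $\Ind$ and of $\Res$, whereas its conclusion on multiplicities holds for \emph{every} cell filtration; consequently the independence-of-filtration clause in the statement of the corollary is not an additional assertion to be proved, but is already part of the conclusion being quoted.
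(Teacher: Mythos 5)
Your proof is correct and takes the same route the paper intends: the paper states this corollary without any proof because it is an immediate consequence of Lemma~\ref{lemma: multiplicities in cell filtrations} applied to the inclusion $H_n \subseteq H_{n+1}$, and your verification of the hypotheses and reading off of parts (3) and (4) is exactly the intended argument.
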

  
  \subsection{Path bases and cell filtrations}
 We consider an increasing sequence  $(H_n)_{n \ge 0}$ of cellular algebras over an integral domain $R$ with field of fractions $F$.   We assume the following conditions are satisfied:
 \begin{enumerate}
  \item   $H_0 = R$, and $H_n^F $ is split semisimple for all $n$.
 \item  The branching diagram $\widehat H$  for the tower $(H_n^F)_{n \ge 0}$ is multiplicity free.
  \end{enumerate}
   We let $(H_n, *, \unrhd, \widehat H_n, \mathscr H_n)$ denote a cell datum for $H_n$.     Denote the unique element of $\widehat H_0$ by $\emptyset$.

 \begin{definition}
A {\em path} on $\widehat H$   from $\la \in \widehat H_\el$  to
$\mu \in  \widehat H_m$  ($\el < m$)  is a sequence  
$$(\la = \la\spp \el, \la\spp {\el + 1}, \dots,  \la\spp {m} = \mu)$$  with $\la \spp i \in \widehat H_i$ and $\la \spp i \to \la \spp {i+1}$  for all $i$.  A path $\mathfrak s$  from $\la$ to 
$\mu$  and a path $\mathfrak t$  from $\mu$ to $\nu$  can be concatenated in the obvious way;  denote the concatenation by $\mathfrak s \circ \mathfrak t$.   If
$\mathfrak t = (\emptyset = \la\spp 0, \la\spp 1, \dots, \la\spp n = \la)$ is a path from 
$\emptyset$ to $\la \in  \widehat H_n$, and $0 \le k < \el \le n$,  write $\mf t(k) = \la\spp k$, 
$\mathfrak t_{[k, \el]}$ for
the path  $(\la\spp k,  \dots, \la\spp \el)$, and  write $\mathfrak t'$ for $\mathfrak t_{[0,n-1]}$.
\end{definition}

   Since for all $n \ge 0$ and all $\mu \in \widehat H_n$,  the rank of the cell module $\cell H n \mu$  equals the dimension over $R$ of $(\cell H n \mu)^F$, and the latter is the number of paths on the branching diagram $\widehat H$ from $\emptyset$ to $\mu$,   we can take $\widehat H_n^\mu$ to be the set of such paths.  

We define a total order on paths on $\widehat H$ as follows:
 
 \begin{definition}  Let $\mfs = (\la\spp \el, \la\spp{\el +1}, \dots, \la\spp m)$   and 
 $\mft = (\mu\spp \el, \mu\spp{\el +1}, \dots, \mu\spp m)$  be two paths from  $\widehat H_\el$ to   $\widehat H_m$.  
 Say that $\mathfrak s$  precedes $\mathfrak t$ in {\em reverse lexicographic  order}  (denoted $\mathfrak s \preceq \mathfrak t$)  if $\mathfrak s = \mathfrak t$, or if  for the last index $j$ such that $\la\spp j \ne \mu \spp j$, we have $\la\spp j < \mu \spp j$ in
$\widehat H_{j}$. 
\end{definition}

\begin{definition}[\cite{MR2774622}]  \label{definition: path bases} For each $n \ge 0$ and each $\la \in \widehat H_n$,  let
$ \{b^\la_\mft  \,\mid\, \mft \in \widehat H_n^\la  \}$  be a basis of the cell module $\cell H n \la$.    The family of bases is called a {\em family of path bases} if the following condition holds:  Let $\la \in \widehat H_n$ and let $\mft \in \widehat H_n^\la$.    Fix $k < n$ and write
  $\mathfrak t_1 = \mathfrak t_{[0,k]}$, and $\mathfrak t_2 = \mathfrak t_{[k,n]}$, and $\mu = \mft(k)$.   Let $x \in H_k$, and let $ b_{\mathfrak t_1}^\mu x = \sum_{\mathfrak s} r(x; \mathfrak s, \mathfrak t_1)  b_{\mathfrak s}^\mu$.  Then
$$
 b_\mft^\la x \equiv  \sum_{\mathfrak s} r(x; \mathfrak s, \mathfrak t_1)  b^\la_{\mathfrak s \circ \mathfrak t_2},
$$ 
modulo  $\spn  \{ b^\la_\mathfrak v    \,\mid\,  \mathfrak v_{[k,n]} \succ \mathfrak t_{[k,n]}         \}$.
\end{definition}

\begin{lemma}  \label{lemma:  path bases and cell filtrations}  \mbox{}
\begin{enumerate}
\item  Suppose that for all  $n \ge 1$ and for all $\mu \in \widehat H_n$,  $\Res^{H_n}_{H_{n-1}}(\cell H n \mu)$ has an order preserving cell filtration.   Then the cell modules of the tower $(H_n)_{n \ge 0}$ have a family of path bases.
\item   Conversely, suppose we are given a family  of path bases  of the cell modules of the tower $(H_n)_{n \ge 0}$.   Then for all  $n \ge 1$ and for all $\mu \in \widehat H_n$,  $\Res^{H_n}_{H_{n-1}}(\cell H n \mu)$ has a cell filtration.   Moreover,  if  \break 
$ \{\la \in \widehat H_{n-1}  \,\mid\, \la \to \mu  \}$ is totally ordered in $\widehat H_{n-1}$,  then $\Res^{H_n}_{H_{n-1}}(\cell H n \mu)$ has an order preserving cell filtration.
\end{enumerate}
\end{lemma}

\begin{proof}  The first statement is proved in ~\cite[Proposition 2.18]{MR2774622};  we will give more concrete construction of path bases in 
\hyperref[subsection: Bases of cell modules in towers of cyclic cellular algebras]{Section~\ref*{subsection: Bases of cell modules in towers of cyclic cellular algebras}},
in the case that all the algebras $H_n$ are cyclic cellular.

  For the converse,  suppose we are given a family of path bases $ \{b_\mft  \,\mid\, \mft \in \widehat H_n^\mu  \}$  for $n \ge 0$ and for 
$\mu \in \widehat H_n$.    For $n \ge 1$ and $\mu \in \widehat H_n$,  let $\la\power 1, \dots, \la\power s$ be a list of $ \{\la \in \widehat H_{n-1}  \,\mid\, \la \to \mu  \}$, ordered so that $i < j$ if  $\la \power i \rhd \la \power j$.
Let 
$$
N_j = \spn\left\{b_\mft\ \big \vert \   \mft \in \widehat H_n^\mu \text{ and }  \mft(n-1) =   \la \power i \text{ for some } i \le j  \right\}.
$$  
It follows from the definition of a path basis that $N_j$ is an $H_{n-1}$ submodule of 
$\Res^{H_n}_{H_{n-1}}(\cell H n \mu)$, and that $N_j/N_{j-1} \cong  \cell H {n-1} {\la \power j}$.     If 
$ \{\la \in \widehat H_{n-1}  \,\mid\, \la \to \mu \}$ is totally ordered, then clearly this cell filtration is order preserving.
\end{proof}

\subsection{Cyclic cellularity and branching factors}  \label{subsection: Cyclic cellularity and branching factors}

Suppose that $A \subseteq B$ are cyclic cellular algebras over an integral domain $R$.  We have the following observations regarding cell filtrations of restricted and induced modules:

\begin{enumerate}[label=(\arabic{*}), ref=\arabic{*},leftmargin=0pt,itemindent=1.5em]  
\item \label{observation 1 regarding branching factors}
Let
$\mu \in \widehat B$ and suppose that $\Res^B_A(\cell B {} \mu)$ has a cell filtration:
\begin{align} \label{equation: ordered filtration of restricted module 0}
\{0\}=M_0\stackrel{\lambda^{(1)}}{\subseteq} M_1\stackrel{\lambda^{(2)}}{\subseteq}\cdots\stackrel{\lambda^{(r)}}{\subseteq} M_r= \Res^B_A(\cell B {} \mu).
\end{align}
Let $\generator B \mu$ be a generator of the $B$--module $\cell B {} \mu$.   Since $M_j/M_{j-1} \cong \cell A {} {\la \power j}$ is a cyclic $A$ module,  there exists an element $d_{\la \power j \to \mu}^B \in B$   such that 
$\generator B \mu  d_{\la \power j \to \mu}^B + M_{j-1}$  is a generator of $M_j/M_{j-1}$.  

\item \label{observation 2 regarding branching factors}
 Let $\la \in \hat A$ and suppose $\Ind_A^B(\cell A {} \la)$ has a cell filtration:  
\begin{align} \label{equation: ordered filtration of induced module}
\{0\}=N_0\stackrel{\mu^{(1)}}{\subseteq} N_1\stackrel{\mu^{(2)}}{\subseteq}\cdots\stackrel{\mu^{(p)}}{\subseteq} N_p= \Ind_A^B(\cell A {} \la).
\end{align}
Let $\generator A \la$ be a generator of the $A$--module $\cell A {} \la$;  then $\generator A \la \otimes 1$
is a generator of the $B$--module $\Ind_A^B(\cell A {} \la)$.   Since $N_j/N_{j-1} \cong \cell B {} {\mu \power j}$ is a cyclic $B$--module,  there exists an element $u_{\la  \to \mu \power j }^B  \in B$  such that \break
 $\generator A \la \otimes u_{\la \to \mu \power j }^B + N_{j-1}$ is a generator of $N_j/N_{j-1}$.

\end{enumerate}

We call the elements $d_{\la  \to \mu}^B$  and $u_{\la  \to \mu}^B$  {\em branching factors}.   They are not canonical, but in the examples 
in \hyperref[section: Hecke algebra example]{Section~\ref*{section: Hecke algebra example}} and  \hyperref[section: applications]{Section~\ref*{section: applications}},  it will be possible to make natural choices for these elements.  

\subsection{Bases of cell modules in towers of cyclic cellular algebras}
\label{subsection: Bases of cell modules in towers of cyclic cellular algebras}
\label{subsection: bases of cell modules for coherent cyclic towers}
Consider a tower   $(H_n)_{n \ge 0}$ of cellular algebras over an integral domain $R$ with field of fractions $F,$ satisfying conditions ~\eqref{semicoherent + cyclic 1} --~\eqref{semicoherent + cyclic 3} listed at the beginning of \hyperref[section: bases in coherent towers]{Section~\ref*{section: bases in coherent towers}}  
\ignore{
satisfying the following conditions:
 \begin{enumerate}
 \item \label{semicoherent + cyclic 1} $H_0 = R$.
  \item \label{semicoherent + cyclic 2}  For each $n \ge 0$ and each $\mu \in \widehat H_{n+1}$,   $\Res^{H_{n+1}}_{H_{n}} (\cell H {n+1} \mu)$ has an order preserving cell filtration.
 \item \label{semicoherent + cyclic 3}  $H_n^F$ is split semisimple for all $n$,  where $F$ is the field of fractions of $R$.
 \item \label{semicoherent + cyclic 4}  For all $n$,  $H_n$ is cyclic cellular.
 \end{enumerate}
 }
 We let $(H_n, *, \unrhd, \widehat H_n, \mathscr H_n)$ denote a cell datum for $H_n$.   Denote by $\emptyset$ the unique element of $\widehat H_0$.  

 Because of  assumptions~\eqref{semicoherent + cyclic 1} --~\eqref{semicoherent + cyclic 3}, and 
 ~\hyperref[lemma: multiplicities in cell filtrations]{Lemma~\ref*{lemma: multiplicities in cell filtrations}},
 \ignore{
 and \hyperref[corollary:  multiplicities in cell filtrations 3]{Corollary~\ref*{corollary:  multiplicities in cell filtrations 3}}
 } 
 there is a multiplicity--free branching diagram  $\widehat H$  associated with the tower, namely the branching diagram for the tower $(H_n^F)_{n \ge 0}$ of  split semisimple algebras over $F$.   
 The edges in the branching diagram are determined as follows: For $\la \in \widehat H_n$ and $\mu \in \widehat H_{n+1}$,  $\la \to \mu$ if and only if  $ \cell {H} {n} \la$ appears a subquotient in a cell filtration of 
 $  \Res_{H_n}^{H_{n+1}}(\cell {H} {n+1}  \mu)$.  
 
 Fix once and for all an order preserving cell filtration of 
 $\Res^{H_{n+1}}_{H_{n}} (\cell H {n+1} \mu)$  for each $n \ge 0$ and each $\mu \in \widehat H_{n+1}$:
 \begin{align} \label{equation: ordered filtration of restricted module}
\{0\}=M_0\stackrel{\lambda^{(1)}}{\subseteq} M_1\stackrel{\lambda^{(2)}}{\subseteq}\cdots\stackrel{\lambda^{(r)}}{\subseteq} M_r= \Res^{H_{n+1}}_{H_{n}}(\cell H {n+1} \mu).
\end{align}  
 Let $\generator {H_{n+1}} \mu$ be a generator of $\cell H {n+1} \mu$.     Following observation \eqref{observation 1 regarding branching factors} in 
 \hyperref[subsection: Cyclic cellularity and branching factors]{Section~\ref*{subsection: Cyclic cellularity and branching factors}},  for each edge $\la \to \mu$ in $\widehat H$,  fix an element $\dd {\la } {\mu} {n+1} \in H_{n + 1}$   such that $\generator {H_{n+1}} \mu  \dd {\la \power j } {\mu} {n+1} + M_{j-1}$   is a generator of 
 $M_j/M_{j-1}$.  
 Note that the cell modules of $H_1$ have rank 1, and we can choose all the elements $\dd \emptyset \mu 1$ for $\mu \in \widehat H_1$  to be $1$. 
 
Now fix $n \ge 1$ and $\la \in \widehat H_n$.  For each path
$\mft = (\emptyset = \la \power 0, \la \power 1, \dots, \la \power n = \la) \in \widehat H_n^\la$,  define
\begin{equation}  \label{equation: ordered product of d coefficients}
d_\mft =    {\dd {\la \power {n-1}} {\la \power n} {n}} \,
{\dd {\la \power {n-2}} {\la \power {n-1}} {n-1}}
\cdots
{\dd {\emptyset} {\la \power 1} {1}}
.\end{equation}

\begin{proposition}  \label{proposition:  basis for cell modules of coherent cyclic tower}
Let $n \ge 1$ and let $\mu \in \widehat H_n$.  Consider our chosen cell filtration of 
$\Res^{H_n}_{H_{n-1}}(\cell H n \mu)$,
\begin{align} \{0\}=M_0\stackrel{\lambda^{(1)}}{\subseteq} M_1\stackrel{\lambda^{(2)}}{\subseteq}\cdots\stackrel{\lambda^{(r)}}{\subseteq} M_r=\Res_{H_{n-1}}^{H_{n}}(\cell H {n} \mu).
\end{align}
  
\begin{enumerate}
\item For $1 \le j \le r$,   
$$\left\{{\generator {H_n} \mu} d_\mft  \ \big \vert \ \mft \in \widehat H_n^\mu  \text{ and }  \mft(n-1) \in  \{\la \power 1, \la \power 2, \dots, \la \power j  \}   \right\}
$$ 
is a basis of $M_j$. 
\item  In particular, $ \{{\generator {H_n} \mu} d_\mft  \mid   \mft \in \widehat H_n^\mu \}$ is a basis of $\cell H n \mu$.
\end{enumerate}
\end{proposition}

\begin{proof}  Evidently, statement (1) implies statement (2).  We prove both statements by induction on $n$, the case $n = 1$ being evident.  Fix $n > 1$ and suppose the statements hold for cell modules of 
$H_k$  for $1 \le k \le n-1$.  For each $i$ we have an isomorphism of $H_{n-1}$--modules
$$
\varphi_i: {\generator {H_{n}} \mu}  {\dd {\lambda \power i} \mu {n}} h + M_{i-1} \mapsto
\generator {H_{n-1}} {\la \power i} h
$$
from $M_i/M_{i-1}$ to   $\cell H {n-1}  {\la \power i}$.  By the induction hypothesis,
$\{ {\generator {H_{n-1}} {\la \power i}} d_\mfs  \,\mid\, \mfs \in \widehat H_{n-1}^{\lambda \power i}  \}$  is a basis of the cell module $\cell H {n-1} {\la \power i}$.   Pulling back this basis via $\varphi_i$, we get that
$$
\leftbrace {\generator {H_{n}} \mu}  {\dd {\lambda \power i} \mu {n}}  d_\mfs  + M_{i-1}   \ \big \vert\    \mfs \in  \widehat H_{n-1}^{\la \power i} \rightbrace
$$
is a basis of $M_i/M_{i-1}$.  It follows that for each $j$,
$$\left\{{\generator {H_{n}} \mu}  {\dd {\lambda \power i} \mu {n}}  d_\mfs    \ \big \vert\  1 \le i \le j \text{ and }  \mfs \in  \widehat H_{n-1}^{\la \power i} \right\}
$$
is a basis of $M_j$.  But this basis is equal to 
\[
\left\{{\generator {H_n} \mu} d_\mft  \ \big \vert\  \mft \in \widehat H_n^\mu  \text{ and }  \mft(n-1) \in \big\{\la \power 1, \la \power 2, \dots, \la \power j  \big\}   \right\}.
\]
 This proves statement (1),   and statement (2) follows.
\end{proof}

\begin{corollary}\label{label:1} For each $n$ and $\la \in \widehat H_n$,   let $c_\la$ be a lifting in $H_n^{\unrhd \la}$ of
$\alpha_\la\inv((\generator {H_n} \la)^* \otimes \generator {H_n} \la)$. 
Then
$$
\leftbrace d_\mfs^*  c_\la d_\mft    \ \big \vert\   \la \in \widehat H_n \text{ and }  \mfs, \mft \in \widehat H_n^\la \rightbrace
$$
is a cellular basis of $H_n$ which is equivalent to the original cellular basis  $\mathscr H_n$.
\end{corollary}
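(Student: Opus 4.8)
The plan is to deduce the statement directly from Proposition~\ref{proposition:  basis for cell modules of coherent cyclic tower} together with Lemma~\ref{lemma: globalizing bases of cell modules -- cyclic case}, which was recorded precisely for this purpose. First I would fix $n \ge 1$ and $\la \in \widehat H_n$. By part~(2) of Proposition~\ref{proposition:  basis for cell modules of coherent cyclic tower}, the set $\leftbrace \generator {H_n} \la\, d_\mft : \mft \in \widehat H_n^\la \rightbrace$ is an $R$--basis of the cell module $\cell H n \la$. This is exactly the data needed to invoke Lemma~\ref{lemma: globalizing bases of cell modules -- cyclic case}: take $b_\mft := \generator {H_n} \la\, d_\mft$ for the chosen $R$--basis of $\cell H n \la$, and take $w_\mft := d_\mft$, so that the required relation $b_\mft = \generator {H_n} \la\, w_\mft$ holds with no further work. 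Note that the hypothesis of that lemma is only that the $b_\mft$ form an $R$--basis of the cell module; it imposes no constraint on the $w_\mft$ beyond $b_\mft = \generator {H_n} \la\, w_\mft$, so any admissible choice of the elements $\dd \la \mu {i+1}$ (and hence of the paths' elements $d_\mft$) is allowed.

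Lemma~\ref{lemma: globalizing bases of cell modules -- cyclic case} then produces the cellular basis $b^\la_{\mfs \mft} = w_\mfs^*\, c_\la\, w_\mft = d_\mfs^*\, c_\la\, d_\mft$, where $c_\la$ is, in the running conventions for cyclic cellular algebras, a lifting in $H_n^{\unrhd \la}$ of $\alpha_\la\inv((\generator {H_n} \la)^* \otimes \generator {H_n} \la)$ --- that is, exactly the element $c_\la$ named in the statement of the corollary. The lemma moreover asserts that this basis is equivalent to the original cellular basis $\mathscr H_n$. Since $n \ge 1$ and $\la$ were arbitrary, this establishes the corollary for all $n \ge 1$; the case $n = 0$ is trivial, as $H_0 = R$ has a single one--dimensional cell module and $d_\mft = c_\la = 1$.

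I expect there to be essentially no obstacle here: the corollary is a bookkeeping consequence of results already in hand. The one point requiring attention is matching notation --- one must check that the element $c_\la$ appearing in Lemma~\ref{lemma: globalizing bases of cell modules -- cyclic case} is indeed a lifting of $\alpha_\la\inv((\generator {H_n} \la)^* \otimes \generator {H_n} \la)$, and that the elements $d_\mft$, built from the fixed filtrations~\eqref{equation: ordered filtration of restricted module} and the fixed elements $\dd \la \mu {i+1}$, really do express the cell-module basis vectors in the form $\generator {H_n} \la\, d_\mft$. Both of these are immediate: the first from the definition of $c_\la$, and the second from Proposition~\ref{proposition:  basis for cell modules of coherent cyclic tower}.
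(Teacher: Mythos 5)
Your proof is correct and follows the paper's own one-line argument exactly: the paper likewise derives the corollary by combining Proposition~\ref{proposition:  basis for cell modules of coherent cyclic tower} with Lemma~\ref{lemma: globalizing bases of cell modules -- cyclic case}. You have simply spelled out the substitutions $b_\mft = \generator {H_n} \la d_\mft$, $w_\mft = d_\mft$ that the paper leaves implicit.
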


\begin{proof} Follows from \hyperref[proposition:  basis for cell modules of coherent cyclic tower]{Proposition~\ref*{proposition:  basis for cell modules of coherent cyclic tower}} and \hyperref[lemma: globalizing bases of cell modules -- cyclic case]{Lemma~\ref*{lemma: globalizing bases of cell modules -- cyclic case}}.
\end{proof}

\begin{lemma}  \label{lemma: path basis condition for delta-d t basis}
 The family of bases $ \{\generator {H_n} \la d_\mft   \,\mid\,  \mft \in \widehat H_n^\la  \}$ of the cell modules $\cell H n \la$ is a family of path bases.
\end{lemma}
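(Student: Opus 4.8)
The plan is to verify the path-basis condition directly from the recursive definition of the elements $d_\mft$, peeling off one level at a time and using the construction of the $d$'s as lifts of isomorphisms between the subquotients of the restriction filtrations. Fix $\la \in \widehat H_n$, fix $\mft \in \widehat H_n^\la$, fix $k < n$, and write $\mft_1 = \mft_{[0,k]}$, $\mft_2 = \mft_{[k,n]}$, $\mu = \mft(k)$. Unwinding the definition, $d_\mft = d_{\mft_2} \, d_{\mft_1}$, where $d_{\mft_2}$ is the product of the $\dd {\la\power{i-1}} {\la\power i} i$ for $k < i \le n$ (read right to left) and $d_{\mft_1}$ the product for $1 \le i \le k$; note $d_{\mft_2} \in H_n$ and $d_{\mft_1} \in H_k$. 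So $\generator {H_n}\la d_\mft = \generator {H_n}\la d_{\mft_2} \, d_{\mft_1}$, and since $d_{\mft_1} \in H_k$ we need to understand how right multiplication by $x \in H_k$ acts on elements of this form.

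The key structural fact, which I would isolate as the first step, is the following. Consider the chosen cell filtration $\{0\} = M_0 \stackrel{\la\spp 1}{\subseteq} \cdots \stackrel{\la\spp r}{\subseteq} M_r = \Res^{H_n}_{H_{n-1}}(\cell H n \la)$ and more generally the iterated restriction to $H_k$. I claim that for the path $\mft$ with $\mft(k) = \mu$, the element $\generator {H_n}\la d_{\mft_2}$ lies in a term $M$ of the iterated filtration of $\Res^{H_n}_{H_k}(\cell H n \la)$ with $M / M' \cong \cell H k \mu$ as $H_k$-modules (where $M'$ is the preceding term), and under this isomorphism $\generator {H_n}\la d_{\mft_2} + M' \mapsto \generator {H_k}\mu$. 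This is proved by induction on $n - k$, exactly as in the proof of Proposition~\ref{proposition:  basis for cell modules of coherent cyclic tower}, using the defining property \eqref{equation: characterization of elements dd} of the $\dd {} {} {}$ at each step and the fact that restriction of a cell filtration, composed with the further restriction, gives a refinement indexed by paths. The ordering of the filtration terms is governed by the reverse lexicographic order on the relevant path segments, since the filtrations are order preserving and the index $\la\spp j$ compared at the last differing level is exactly the partial-order comparison appearing in the definition of $\preceq$.

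Granting this, the second step is a direct computation. Write $b^\mu_{\mft_1} x = \sum_{\mfs} r(x;\mfs,\mft_1) b^\mu_{\mfs}$ in $\cell H k \mu$, where $b^\mu_{\mft_1} = \generator {H_k}\mu d_{\mft_1}$. Then in $M/M' \cong \cell H k \mu$ we have $\generator {H_n}\la d_{\mft_2} d_{\mft_1} x + M' = (\generator {H_k}\mu d_{\mft_1} x$ transported back$) = \sum_\mfs r(x;\mfs,\mft_1)(\generator {H_n}\la d_{\mft_2} d_{\mfs} + M')$, using that the isomorphism $M/M' \cong \cell H k \mu$ is $H_k$-linear. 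Now $\generator {H_n}\la d_{\mft_2} d_{\mfs} = \generator {H_n}\la d_{\mfs \circ \mft_2}$ by the very definition of $d$ for the concatenated path. Hence
$$
\generator {H_n}\la d_\mft\, x \equiv \sum_\mfs r(x;\mfs,\mft_1)\, \generator {H_n}\la d_{\mfs \circ \mft_2} \pmod{M'}.
$$
It remains to identify the error term $M'$ with $\spn\{\generator {H_n}\la d_\mfv : \mfv_{[k,n]} \succ \mft_{[k,n]}\}$. By Proposition~\ref{proposition:  basis for cell modules of coherent cyclic tower} applied along the iterated filtration, $M$ has basis $\{\generator {H_n}\la d_\mfv : \mfv_{[k,n]} \preceq \mft_{[k,n]}\}$ and $M'$ has basis $\{\generator {H_n}\la d_\mfv : \mfv_{[k,n]} \prec \mft_{[k,n]}\}$ — here I use that the filtration of the restriction from $H_n$ to $H_k$ is order preserving, so its terms are indexed precisely by initial segments in reverse lexicographic order of the path tails, matching the definition of $\preceq$. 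Thus $M' = \spn\{\generator {H_n}\la d_\mfv : \mfv_{[k,n]} \succ \mft_{[k,n]}\}$ as required (the complement of a down-set in a total order).

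The main obstacle I anticipate is bookkeeping in the first step: making precise that the iterated restriction filtration from $H_n$ down to $H_k$, with terms chosen compatibly with the single-step filtrations fixed earlier, is genuinely indexed by path tails $\mfv_{[k,n]}$ in reverse lexicographic order, and that $\generator{H_n}\la d_{\mft_2}$ sits in exactly the term it should. This requires being careful that refining an order-preserving filtration by another order-preserving filtration yields an order-preserving filtration, and that the comparison of labels at the last differing level — which is what both the filtration refinement and the definition of $\preceq$ use — lines up. Once that combinatorial alignment is nailed down, the algebra in step two is routine, as it only uses $H_k$-linearity of the subquotient isomorphisms and the multiplicativity of $d$ under concatenation.
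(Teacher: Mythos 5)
The paper's own ``proof'' is a one-line citation to Proposition 2.18 of \cite{GG:2011b}, so your direct argument is genuinely self-contained in a way the paper is not. Your overall strategy is correct and is exactly the right way to prove this: factor $d_\mft = d_{\mft_2} d_{\mft_1}$ with $d_{\mft_1}\in H_k$; show by the same descending induction as in Proposition \ref{proposition:  basis for cell modules of coherent cyclic tower} that $\generator{H_n}\la d_{\mft_2}$ lands in the correct subquotient $M/M'$ of the iterated restriction filtration, mapping to $\generator{H_k}\mu$; use $H_k$-linearity of that isomorphism to transport $b^\mu_{\mft_1}x = \sum_\mfs r(x;\mfs,\mft_1)\,b^\mu_\mfs$; and use $d_{\mft_2} d_\mfs = d_{\mfs\circ\mft_2}$ to rewrite the result. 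All of this is fine, and the worry you flag about iterating order-preserving filtrations is genuine but manageable: the labels appearing in any single-step order-preserving filtration form a chain, so any two distinct path tails with the same endpoint are comparable under $\preceq$ at their last differing level, and the linear order on filtration terms matches $\succ$.

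There is, however, a direction error in the last paragraph. You assert that $M$ has basis $\{\generator{H_n}\la d_\mfv : \mfv_{[k,n]}\preceq\mft_{[k,n]}\}$ and $M'$ has basis $\{\generator{H_n}\la d_\mfv : \mfv_{[k,n]}\prec\mft_{[k,n]}\}$, and then conclude $M'=\spn\{\generator{H_n}\la d_\mfv : \mfv_{[k,n]}\succ\mft_{[k,n]}\}$ via a ``complement of a down-set'' argument; the penultimate claims contradict the conclusion. In fact, by Proposition \ref{proposition:  basis for cell modules of coherent cyclic tower}, $M_{j-1}$ is spanned by the $d_\mfv$ with $\mfv(n-1)\in\{\la\power 1,\dots,\la\power{j-1}\}$, i.e.\ with $\mfv(n-1)\rhd\mft(n-1)$, and more dominant labels at higher levels come \emph{earlier} in an order-preserving filtration. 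So the correct intermediate statements are that $M$ has basis $\{\generator{H_n}\la d_\mfv : \mfv_{[k,n]}\succeq\mft_{[k,n]}\}$ and $M'$ has basis $\{\generator{H_n}\la d_\mfv : \mfv_{[k,n]}\succ\mft_{[k,n]}\}$, from which the conclusion follows directly, without any complement-taking. With that fix your argument is a complete and correct proof.
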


\begin{proof}  This is a special case of ~\cite[Proposition 2.18]{MR2774622}.
\end{proof}

\begin{remark}  Existence of path bases in coherent towers of cellular algebras (without the cyclic condition)  was already shown in ~\cite[Proposition 2.18]{MR2774622}, but the construction there is not explicit.

\end{remark}

\section{Example:  The Iwahori--Hecke algebra of the symmetric groups} \label{section: Hecke algebra example}

In this section,  we apply the theory of \hyperref[section: bases in coherent towers]{Section~\ref*{section: bases in coherent towers}} to the 
Iwahori--Hecke algebra of the symmetric groups.  In particular, we recall that the sequence of Hecke algebras is a coherent tower of cyclic cellular algebras, and we compute the branching factors for reduced and  induced cell modules.  We show that the path bases obtained via ordered products of branching factors coincide with the Murphy bases.  

\subsection{Combinatorics} \label{subsection: combinatorics}
Let $n$ denote a non--negative integer and $\mathfrak{S}_n$ be the symmetric group acting on $\{1,\dots,n\}$ on the right. For $i$ an integer, $1\le i<n$, let $s_i$ denote the transposition $(i,i+1)$. Then $\mathfrak{S}_n$ is presented as a Coxeter group by generators $s_1,s_2,\dots,s_{n-1}$, with the relations 
\begin{align*}
&s_i^2=1,&&\text{for $i=1,\ldots,n-1,$}\\
&s_is_j=s_js_i,&&\text{for $|i-j| > 1$.}\\
&s_is_{i+1}s_i=s_{i+1}s_is_{i+1},&&\text{for $i=1,\ldots,n-2$.}
\end{align*}
A product $w=s_{i_1}s_{i_2}\cdots s_{i_j}$ in which $j$ is
minimal is called a \emph{reduced expression} for $w$ and $j=\el(w)$ is the \textit{length} of $w$.

If $n\ge0$, a {\em composition} of $n$ is sequence $\lambda=(\lambda_1,\lambda_2,\dots)$ of non-negative integers such that $\sum_{i\ge 1}\lambda_i=n$.  A {\em partition} of $n$ is a composition of $n$ with weakly decreasing entries.  We denote the unique partition of zero by $\emptyset$.  
The notation $\lambda\vdash n$ indicates that $\lambda$ is a partition of $n$.
If $\lambda$ is a composition, its {\em size} $|\la|$ is  $|\lambda|=\sum_{i\ge 1}\lambda_i$. 
 If $\la$ is a partition, its non-zero entries are called its {\em parts}.
 
 The {\em diagram} of a composition  of $\lambda$ is the set
\begin{align*}
[\lambda]=\left\{(i,j)\,|\,\text{$\lambda_i\ge j\ge1$ and $i\ge
1$}\,\right\}\subseteq \mathbb{N}\times\mathbb{N}.
\end{align*}
The elements of $[\lambda]$ are the \emph{nodes} of $\lambda$ and more generally a node is a pair $(i,j)\in\mathbb{N}\times\mathbb{N}$. The diagram $[\lambda]$ is traditionally represented as an array of boxes with $\lambda_i$ boxes on the $i$--th row. For example, if $\lambda=(3,2)$, then $[\lambda]=\text{\tiny\Yvcentermath1$\yng(3,2)$}$\,.   Usually, we will identify the partition $\lambda$ with its diagram and write $\lambda$ in place of $[\lambda]$.   The diagram of a partition is commonly called a {\em Young diagram}.
We denote the set of Young diagrams of size $n$ by  $\mathcal Y_n$.  

An {\em addable node} of a  Young diagram $\mu$  is a node $\alpha$  not contained in $\mu$ such that appending the node gives another Young diagram;   we write $\mu \cup \alpha$ for the Young diagram  obtained by appending $\alpha$.   For example,
$\alpha = (2, 3)$ is an addable node of $\mu = (3,2)$, and $\mu \cup \alpha = (3,3)$.   Similarly, a {\em removable node} of $\lambda$ is a node  $\alpha$ contained  in $\lambda$ such that removing the node gives another Young diagram; we write $\lambda \setminus \alpha$ for the Young diagram obtained by removing the node.   For example, $\alpha = (2,3)$ is a removable node of $\lambda = (3,3)$  and
$\lambda \setminus \alpha = (3, 2)$.    We write $\mu \to \lambda$  if $\lambda$ is obtained from $\mu$ by adding a node.

The dominance partial order $\unrhd$ on compositions of $n$ is defined as follows:   if $\lambda$ and $\mu$ are compositions of $n$, then $\lambda\unrhd\mu$ if
\begin{align*}
\textstyle\sum_{i=1}^j\lambda_i\ge\sum_{i=1}^j\mu_i \quad \text{for all $j\ge 1$.}
\end{align*}
We write $\lambda\rhd\mu$ to mean that $\lambda\unrhd\mu$ and $\lambda\ne\mu$. 

Let  $\la$ be a composition of $n$  . A $\lambda$--tableau $\mathfrak{t}$ is a map from the nodes of the diagram $[\lambda]$ to the integers $\{1,2,\dots,n\}$.
A  $\lambda$--tableau  can be represented by labelling the nodes of the diagram $[\lambda]$ with the integers $1,2,\dots,n$. For example, if $n=6$ and $\lambda=(3,2,1)$,
\begin{align}\label{tabex0.0}
\mathfrak{t}=\text{\tiny\Yvcentermath1$\young(146,23,5)$}
\end{align}
represents a $\lambda$--tableau. If $\mft$ is a $\lambda$--tableau, we say that $\lambda$ is the {\em shape} of $\mft$, and we write $\la = \shape(\mft)$ or $\la = [\mft]$.  
Write $\mathcal T(\lambda)$ for the set of all  $\lambda$--tableaux and $\mathcal T_0(\lambda)$ for the set of all injective $\lambda$--tableaux, i.e. those  in which each number from $1$ to $n$ appears exactly once.  If $\mft \in \mathcal T_0(\la)$ and $1 \le k \le n$,  we write $\node_\mft(k)$ for the node in $\la$ containing the entry $k$,   $\row_\mft(k)$ for the row coordinate of $k$ in $\mft$ and $\col_\mft(k)$ for the column coordinate of $k$ in $\mft$.  

The symmetric group $\mathfrak{S}_n$ acts freely and transitively on $\mathcal T_0(\lambda)$,  on the right,  by acting on 
the integer labels of the nodes of $[\lambda]$. For example,
\begin{align*}
\text{\tiny\Yvcentermath1$\young(123,45,6)$}\,\text{\small$(2,4)(3,6,5)$}\,
=\text{\tiny\Yvcentermath1$\young(146,23,5)$} \,.
\end{align*}

If $\la$ is a composition of $n$,  a  {\em row standard} $\lambda$--tableau  is an injective $\la$--tableau in in which the entries strictly increase from left to right along rows.  If $\la$ is a partition of $n$ a {\em standard} $\la$--tableau is a row standard $\la$--tableau in which the entries also increase strictly from top to bottom along columns. 
  Let $\mathcal{T}^{\Std}(\lambda)$ denote the set of standard $\lambda$--tableaux.
  
  If $\la$ is a composition of $n$ and $\mft$ is a row standard $\la$--tableau, then for $k \le n$, $\mft \downarrow_k$ is the tableau obtained by deleting from $\mft$ the nodes containing $k+1, \dots, n$.  Since $\mft$ is row standard, it follows that the remaining set of nodes, namely the shape of $\mft \downarrow_k$,  is the diagram of a composition of $k$.    If $\mfs$ and $\mft$ are both row standard tableaux of size $n$, we say $\mfs$ dominates $\mft$ and write $\mfs \unrhd \mft$ if 
 $[\mfs\downarrow_k] \unrhd [\mft\downarrow_k]$ for all $k \le n$.

Let $\mathfrak{t}^\lambda$ denote the standard
$\lambda$--tableau in which $1,2,\dots,n$ are
entered in increasing order from left to right along the rows of
$[\lambda]$. Thus in the previous example where $n=6$ and
$\lambda=(3,2,1)$,
\begin{align}\label{tabex1}
\mathfrak{t}^\lambda=\text{\tiny\Yvcentermath1$\young(123,45,6)$}\,.
\end{align}
For each $\mft \in \mathcal T_0(\lambda)$,  let  $w(\mft)$ denote the unique permutation such that
$\mft = \mft^\lambda w(\mft)$.

 The \emph{Young subgroup} $\mathfrak{S}_\lambda$ is defined to be the row stabiliser of $\mathfrak{t}^\lambda$ in $\mathfrak{S}_{n}$.
For instance, when $n=6$ and $\lambda=(3,2,1)$, as
in~\eqref{tabex1}, then $\mathfrak{S}_\lambda=\langle
s_1,s_2,s_4\rangle$.

Let $\lambda \vdash n$ and let $\mft \in \mathcal T_0(\lambda)$.  Let $\alpha$ be an addable node of $\lambda$.  Then we write $\mft \cup \alpha$ for the tableau of shape $\lambda \cup \alpha$ which agrees with $\mft$ on the nodes of  $\lambda$ and which has the entry $n+1$ in node $\alpha$. 
If $\mft$ is a standard $\lambda$--tableau, then the node of $\mft$ containing the entry $n$ is a removable node $\beta$  of $\lambda$.  Write $\mft' = \mft \downarrow_{n-1}$  for the 
standard tableau of shape $\lambda \setminus \beta$ obtained by removing the node $\beta$.

\subsection{Iwahori--Hecke algebras of the symmetric group}\label{e-i-h}
Let $R$ be an integral domain and $q$ be a unit in $R$. Let $\mathcal{H}_n=\mathcal{H}_n(q^2)$ denote the Iwahori--Hecke algebra of the symmetric group,  which is presented by the generators $T_1,\ldots, T_{n-1}$, and the relations
\begin{align*}
&T_iT_j=T_jT_i,&&\text{if $|i-j|>1$,}\\
&T_iT_{i+1}T_{i}=T_{i+1}T_iT_{i+1},&&\text{for $i=1,\ldots,n-2$,}\\
&(T_i-q)(T_i+q^{-1})=0,&&\text{for $i=1,\ldots,n-1$.}
\end{align*}
If we need to refer explicitly to the ground ring $R$,  we write $\mathcal H_n(R; q^2)$.  
If $v\in\mathfrak{S}_n$, and $v=s_{i_1}s_{i_2}\cdots s_{i_l}$ is a reduced expression for $v$ in $\mathfrak{S}_n$, then $T_v=T_{i_1}T_{i_2}\cdots T_{i_l}$ is well defined in $\mathcal{H}_n(q^2)$ and $\{T_v\mid v\in \mathfrak{S}_n\}$ freely generates $\mathcal{H}_n(q^2)$ as an $R$--module. 
It follows from this that $\mathcal H_n$ imbeds in $\mathcal H_{n+1}$ for all $n \ge 0$. 
The $R$--module map $*:T_v\mapsto T_{v^{-1}}$ is an algebra anti--automorphism of $\mathcal{H}_n(q^2)$. If $i,j=1,\ldots,n,$ let
\begin{align*}
T_{i,j}=
\begin{cases}
T_iT_{i+1}\cdots T_{j-1},&\text{if $j\ge i$,}\\
T_{i-1}T_{i-2}\cdots T_{j},&\text{if $i>j$.}
\end{cases}
\end{align*}

If $R$ is a field of characteristic zero and $q$ is not a proper root of unity, then it is known that each of the algebras $\mathcal H_n$ is split semisimple with simple modules labeled by the set $\mathcal Y_n$  of Young diagrams of size $n$;  moreover the branching diagram $\widehat{\mathcal H}$ of the tower $(\mathcal H_n)_{n \ge 0}$ is Young's lattice;  namely for Young diagrams $\la$ and $\mu$  with $|\mu| = |\la| + 1$,  we have $\la \to \mu$ if and only if $\mu$ is obtained from $\la$ by adjoining one node.

If $\mu\in\widehat{\mathcal{H}}_n$, define $\widehat{\mathcal{H}}_n^\mu$ to be the set of paths 
$(\mu\power 0 = \emptyset, \mu\power 1, \dots, \mu\power n = \mu)$ 
on Young's lattice $\widehat{\mathcal H}$ from $\emptyset$ to $\mu$. 
If $\mathfrak{t}$ is such a path, we regard $\mathfrak{t}$ as a map $\mathfrak{t}:\mu\to\{1,\ldots,n\}$ where, for  a node $a\in\mu$ and $1\le i \le n$,
\begin{align*}
\mathfrak{t}(a)=i\quad\text{if $\mu^{(i)}=\mu^{(i-1)}\cup\{a\}$.}
\end{align*}
In this way we obtain an identification of  the set of paths $\widehat{\mathcal{H}}_n^{\mu}$ with the set of standard tableaux $\mathcal{T}^{\Std}(\mu)$.

If $\mu\in\widehat{\mathcal{H}}_n$, let
\begin{align} \label{equation:  element m mu in Hecke algebra}
m_\mu=\sum_{v\in\mathfrak{S}_\mu}q^{\el(v)}T_v.
\end{align}

In the following statement, recall that for $\la \in \widehat{\mathcal H}_i$ and $\mf t \in \mathcal{T}^{\Std}(\la)$,  $w(\mf t)$ denotes the unique permutation  in $\mf S_i$ such that
$\mft^\la w(\mft) = \mft$.  

\begin{theorem}[Murphy~\cite{MR1327362}]\label{m-b}
For $i \ge 1$,
\begin{align*}
\mathscr{H}_i=
\left\{
m_\mathfrak{st}^\lambda=T_{w(\mathfrak{s})}^*m_\lambda T_{w(\mathfrak{t})} \ \big \vert\ 
\text{$\mathfrak{s},\mathfrak{t}\in\mathcal{T}^{\Std}(\lambda)$, $\lambda\in\widehat{\mathcal{H}}_i$}
\right\}
\end{align*}
is an $R$--basis for $\mathcal{H}_i$, and $(\mathcal{H}_i,*,\widehat{\mathcal{H}}_i,\unrhd,\mathscr{H}_i)$ is a cell datum for $\mathcal{H}_i$. 
\end{theorem}

\begin{remark}  The basis elements defined here actually differ by a power of $q$ from those defined by Murphy.  Murphy and other authors use  generators (call them $\tilde T_i$)  for $\mathcal H_n$  satisfying
\break $(\tilde T_i - q^2)(\tilde T_i + 1) = 0$.   These are related to our generators by $\tilde T_i = q  T_i$
Thus Murphy's basis elements would be $q^{\el(\mfs) + \el(\mft)} m^\lambda_{\mfs \mft}$.  
\end{remark}

We let  $ \{m^\lambda_\mft  \,\mid\, \mft \in   \std \lambda \}$  denote the basis of the cell module
$\cell {\mathcal H} n \lambda$ derived from the Murphy basis.  Then we have $m^\lambda_\mft =
m^\lambda_{\mft^\lambda}  T_{w(\mft)}$.   In particular, we see that the Hecke algebra is a cyclic cellular algebra, with $\cell {\mathcal H} n \lambda$ generated by $m^\lambda_{\mft^\lambda}$.  The bimodule isomorphism $\alpha_\la: \mathcal H^{\unrhd \lambda}/\mathcal H^{\rhd \lambda} \to 
(\cell {\mathcal H} n \lambda)^* \otimes \cell {\mathcal H} n \lambda$ is \break
$\alpha_\lambda: m^\lambda_{\mfs \mft} + \mathcal H^{\rhd \lambda}  \mapsto  T_{w(\mfs)}^* (m^\lambda_{\mft^\lambda})^* \otimes m^\lambda_{\mft^\lambda} T_{w(\mft)}$.   In particular  $m_\lambda$ is a lift
in $\mathcal H^{\unrhd \lambda}$ of $\alpha_\lambda\inv((m^\lambda_{\mft^\lambda})^* \otimes m^\lambda_{\mft^\lambda} )$,  so plays the role of the element $c_\lambda$  in \hyperref[subsection:  cellular algebras with cyclic cell modules]{Section~\ref*{subsection:  cellular algebras with cyclic cell modules}}.

We record this as a corollary:

\begin{corollary}  \label{corollary Hecke algebras cyclic cellular}
The Hecke algebras $\mathcal H_n$ are cyclic cellular algebras.
\end{corollary}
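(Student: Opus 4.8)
The plan is to deduce the corollary directly from Murphy's Theorem~\ref{m-b} and from the paragraph immediately preceding the corollary statement. Murphy's theorem already supplies a cellular basis $\mathscr{H}_n$ for $\mathcal{H}_n$, so the only thing left to verify is that every cell module $\cell{\mathcal H}{n}{\lambda}$ is cyclic. First I would recall, from the construction of the cell module attached to a cellular basis (Definition~\ref{definition: cell module}), that the standard basis of $\cell{\mathcal H}{n}{\lambda}$ derived from $\mathscr{H}_n$ is $\{m^\lambda_\mft : \mft \in \std\lambda\}$, where $m^\lambda_\mft$ corresponds, under the injective $A$--module map $\cell{\mathcal H}{n}{\lambda}\hookrightarrow \mathcal H^{\unrhd\lambda}/\mathcal H^{\rhd\lambda}$ of Definition~\ref{definition: cell module} (taking $\mfs=\mft^\lambda$), to $m^\lambda_{\mft^\lambda\,\mft}+\mathcal H^{\rhd\lambda}$.

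Next I would carry out the one small computation needed. Since $w(\mft^\lambda)$ is the identity permutation, $T_{w(\mft^\lambda)}=T_{w(\mft^\lambda)}^*=1$, so from the definition $m^\lambda_{\mfs\mft}=T_{w(\mfs)}^* m_\lambda T_{w(\mft)}$ we get $m^\lambda_{\mft^\lambda\,\mft}=m_\lambda T_{w(\mft)}$, and hence $m^\lambda_\mft = m^\lambda_{\mft^\lambda} T_{w(\mft)}$ as elements of $\cell{\mathcal H}{n}{\lambda}$, for every $\mft\in\std\lambda$. Therefore $\cell{\mathcal H}{n}{\lambda}$ is generated as a right $\mathcal{H}_n$--module by the single element $m^\lambda_{\mft^\lambda}$; that is, every cell module of $\mathcal H_n$ is cyclic, which is precisely the defining property of a cyclic cellular algebra. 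I would also record in passing that $m_\lambda$ is a lift in $\mathcal H^{\unrhd\lambda}$ of $\alpha_\lambda\inv((m^\lambda_{\mft^\lambda})^*\otimes m^\lambda_{\mft^\lambda})$, so that $m_\lambda$ plays the role of the element $c_\lambda$ in Lemma~\ref{lemma: equivalent conditions for cyclic cellular algebra}, with $T_{w(\mft)}$ playing the role of the elements $v_\mft$; this makes the statement compatible with the notation used in the rest of the paper.

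There is essentially no obstacle: all of the substance resides in Murphy's theorem, which is cited. The only point requiring attention is the routine bookkeeping that identifies the derived cell-module basis with $\{m^\lambda_{\mft^\lambda} T_{w(\mft)} : \mft\in\std\lambda\}$ and thereby exhibits the cyclic generator $m^\lambda_{\mft^\lambda}$ explicitly; once this identification is in place, the conclusion is immediate from the definitions of cell module and of cyclic cellularity.
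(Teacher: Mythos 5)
Your argument is correct and is exactly the one the paper uses: the paper records the corollary as a consequence of the preceding paragraph, which observes that $m^\lambda_\mft = m^\lambda_{\mft^\lambda} T_{w(\mft)}$ and hence that each cell module is generated by $m^\lambda_{\mft^\lambda}$, with $m_\lambda$ playing the role of $c_\lambda$. You have merely spelled out the (correct) one-line computation $T_{w(\mft^\lambda)}=1$, which the paper leaves implicit.
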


\subsection{Cell filtrations and branching factors}
Our next task is to recall that the sequence of Hecke algebras $(\mathcal H_n)_{n \ge 0}$ is a strongly coherent tower of cellular algebras,  and to determine the branching factors
$d_{\mu \to \lambda}\power n$   and $u_{\mu \to \lambda}\power n$ when $\mu \to \lambda$.  
First we discuss the cell filtrations of restrictions of cell modules and the branching factors
$d_{\mu \to \lambda}\power n$.  

\begin{theorem}[Jost, Murphy]  \label{theorem cell filtration of restricted cell modules}
Let $n \ge 1$ and $\lambda \in \widehat{\mathcal H}_n$.  Let $\cell {\mathcal H} n \lambda$ be the corresponding cell module of $\mathcal H_n$.  Then 
$\Res^{\mathcal H_n}_{\mathcal H_{n-1}}(\cell {\mathcal H} n \lambda)$    has an order preserving filtration by cell modules of $\mathcal H_{n-1}$.  
\end{theorem}

Jost ~\cite{MR1461487}  has shown, using the Dipper--James description of Specht modules of the Hecke algebras ~\cite{MR812444},
that the restriction of a Specht module has a filtration by Specht modules.  Together with Murphy's result that the cell modules of the Hecke algebras can be identified with the Specht modules~\cite[Theorem 5.3]{MR1327362}, this shows that the restriction of a cell module has a cell filtration.   A  direct proof of 
\hyperref[theorem cell filtration of restricted cell modules]{Theorem 
\ref*{theorem cell filtration of restricted cell modules}} 
 using Murphy's description of the cellular structure   is given in ~\cite{GKT-2014}.

We now give a more precise description of the cell filtration in \hyperref[theorem cell filtration of restricted cell modules]{Theorem~\ref*{theorem cell filtration of restricted cell modules}}.   Let $\alpha_1, \dots, \alpha_p$ be the list of removable nodes of $\lambda$, listed from bottom to top and let $\mu\power j = \lambda \setminus \alpha_j$.   Thus $i \le j$ if and only if $\mu\power i \unrhd \mu \power j$.  Let $N_0 = (0)$ and for $1 \le j \le p$, let 
$N_j$ be the $R$--submodule of $\cell {\mathcal H} n \lambda$ spanned by by the basis elements
$m^\lambda_\mft$ such that the node containing $n$ in $\mft$ is one of $\alpha_1, \dots, \alpha_j$.  
Then we have
$$
(0) = N_0 \subseteq N_1 \cdots \subseteq N_p = \Res^{\mathcal H_n}_{\mathcal H_{n-1}}(\cell {\mathcal H} n \lambda).
$$
The explicit form of the assertion of \hyperref[theorem cell filtration of restricted cell modules]{Theorem~\ref*{theorem cell filtration of restricted cell modules}} is that the $N_j$  are $\mathcal H_{n-1}$--submodules of $ \Res^{\mathcal H_n}_{\mathcal H_{n-1}}(\cell {\mathcal H} n \lambda)$ and $N_j/N_{j-1} \cong \cell {\mathcal H} {n-1} {\mu \power j}$  for
$1 \le j \le p$.   The isomorphism is determined by 
\begin{equation} m^{\mu \power j}_\mfs \mapsto   m^\lambda_{\mfs \cup \alpha_j} + N_{j-1}.
\end{equation}

We can now determine the branching factors $d_{\mu \to \lambda}\power n$:
\begin{corollary} \label{corollary:  d branching factors for the Hecke algebra}
  The branching factors  $d_{\mu \to \lambda}\power n$ can be chosen as follows:
Let $\lambda \in \widehat{\mathcal H}_n$ and $\mu \in \widehat{\mathcal H}_{n-1}$ with $\mu \to \lambda$.  Let $\alpha = \lambda \setminus \mu$.   Then 
\begin{equation} 
d_{\mu \to \lambda}\power n = T_{w(\mft^\mu \cup \alpha)}.
\end{equation}
More explicitly,  let $a(\alpha)$ be the entry of $\mft^\la$ in the node $\alpha$.  Then
${w(\mft^\mu \cup \alpha)} = (n, n-1, \dots, a(\alpha))$, so 
\begin{equation} 
d_{\mu \to \lambda}\power n = T_{(n, n-1, \dots, a(\alpha))} = T_{a(\alpha), n}.
\end{equation}
\end{corollary}

\begin{proof}  Under the isomorphism $\cell {\mathcal H} {n-1} {\mu \power j} \to N_j/N_{j-1}$, the generator $m^{\mu \power j}_{\mft^{\mu \power j}}$ is sent to 
$$m^\lambda_{\mft^{\mu \power j} \cup \alpha_j}  + N_{j-1} = m^\la_{\mft^\lambda}  T_{w(\mft^{\mu \power j} \cup \alpha_j)} + N_{j-1}.$$
This means that  we can chose $d_{\mu \to \lambda}\power n = T_{w(\mft^\mu \cup \alpha)}$.   Now it is straightforward to check that ${w(\mft^\mu \cup \alpha)} = (n, n-1, \dots, a(\alpha))$, so  that
$d_{\mu \to \lambda}\power n = T_{(n, n-1, \dots, a(\alpha))} = T_{a(\alpha), n}$. 
\end{proof}

Let  $\lambda\in\widehat{H}_n$ and let $\mft$ be a standard $\lambda$--tableau. We identify $\mft$ with a path on the branching diagram $\widehat{\mathcal H}$,  $\mathfrak{t}=(\emptyset = \lambda^{(0)},\ldots,\lambda^{(n)} = \lambda)$.   Define
\begin{align}
d_\mathfrak{t}=d_{\lambda^{(n-1)}\to\lambda^{(n)}}^{(n)}
d_{\lambda^{(n-2)}\to\lambda^{(n-1)}}^{(n-1)}\cdots d_{\lambda^{(0)}\to\lambda^{(1)}}^{(1)}.\label{lfdtt}
\end{align}

\begin{lemma}  \label{permutation of  tableau adjoined addable node}
Let $\lambda$ be a partition of $n$,  let $\alpha$ be a removable node of $\lambda$, and let $\mu = \lambda \setminus \alpha$.   Let $a(\alpha)$ be the entry of  $t^\lambda$ in the node $\alpha$.   
Let $\mfs \in \mathcal T_0(\mu)$ be a $\mu$--tableau.
Then
$$
w(\mfs \cup \alpha) = (n, n-1, \dots, a(\alpha))  \,w(\mfs), 
$$
and
$$
T_{w(\mfs \cup \alpha)} = T_{(n, n-1, \dots, a(\alpha))} T_{w(\mfs)} = T_{a(\alpha), n} T_{w(\mfs)}.
$$
\end{lemma}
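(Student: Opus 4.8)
The plan is to reduce the statement to a routine computation with one-line notation of permutations, using the fact that $w(\mft)$ is, for any injective tableau $\mft$, just the row reading word of $\mft$. Precisely, for a composition $\nu$ of $m$ and $\mft \in \mathcal T_0(\nu)$, the sequence $(1\cdot w(\mft),\, 2\cdot w(\mft),\, \dots,\, m\cdot w(\mft))$ is the list of entries of $\mft$ read from left to right along rows, top row first; this is immediate from $\mft^\nu w(\mft) = \mft$, since $\mft^\nu$ carries the integer $k$ in the $k$-th node in row reading order. Set $a = a(\alpha)$. Since $\alpha$ is the rightmost node of its row, $a$ is the position of $\alpha$ in the row reading order of $\lambda$, and the nodes of $\mu = \lambda\setminus\alpha$ occupy, in that order, exactly the positions $1,\dots,a-1,a+1,\dots,n$, in the same relative order as in the row reading order of $\mu$.

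First I would prove the permutation identity. Let $(b_1,\dots,b_{n-1})$ be the row reading word of $\mfs$, that is the one-line form of $w(\mfs)\in\mathfrak{S}_{n-1}$. By the remark above, the row reading word of $\mfs\cup\alpha$ is obtained by inserting $n$ into position $a$, so it equals $(b_1,\dots,b_{a-1},\,n,\,b_a,\dots,b_{n-1})$, which is then the one-line form of $w(\mfs\cup\alpha)$. On the other hand the cycle $(n,n-1,\dots,a)$ has one-line form $(1,\dots,a-1,\,n,\,a,a+1,\dots,n-1)$, and because $w(\mfs)$ fixes $n$ one checks directly that $(n,n-1,\dots,a)\,w(\mfs)$ has one-line form $(b_1,\dots,b_{a-1},\,n,\,b_a,\dots,b_{n-1})$. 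Comparing, $w(\mfs\cup\alpha) = (n,n-1,\dots,a)\,w(\mfs)$. (Equivalently, since $w(\mfs)$ does not disturb the entry $n$ placed at $\alpha$, one has $\mfs\cup\alpha = (\mft^\mu\cup\alpha)\,w(\mfs)$, and combined with $\mft^\mu\cup\alpha = \mft^\lambda\,(n,n-1,\dots,a)$ — the case $\mfs=\mft^\mu$ already recorded in the corollary computing the branching coefficients $d^{(n)}_{\mu\to\lambda}$ — uniqueness of $w(\cdot)$ gives the claim.)

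For the Hecke algebra statement I would establish length additivity, $\el\big((n,n-1,\dots,a)\,w(\mfs)\big) = \el\big((n,n-1,\dots,a)\big) + \el(w(\mfs))$, by counting inversions in the one-line word $(b_1,\dots,b_{a-1},\,n,\,b_a,\dots,b_{n-1})$ of $w(\mfs\cup\alpha)$: the entry $n$ sits in position $a$ and, as every $b_i\le n-1$, it forms an inversion with each of the $n-a$ positions to its right and with none to its left; deleting position $a$ leaves a sequence order-isomorphic to the one-line form of $w(\mfs)$, accounting for $\el(w(\mfs))$ further inversions. Since $\el\big((n,n-1,\dots,a)\big)=n-a$, additivity follows, whence $T_{w(\mfs\cup\alpha)} = T_{(n,n-1,\dots,a)}\,T_{w(\mfs)}$. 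Finally $s_a s_{a+1}\cdots s_{n-1}$ is a reduced expression for $(n,n-1,\dots,a)$ (it has length $n-a$ and represents that cycle), so $T_{(n,n-1,\dots,a)} = T_a T_{a+1}\cdots T_{n-1} = T_{a,n}$, which completes the argument.

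The only real difficulty I anticipate is bookkeeping: keeping the right-action convention consistent when identifying $w(\mft)$ with the row reading word and when forming the product $(n,n-1,\dots,a)\,w(\mfs)$, and making the inversion count airtight, including the degenerate case $a=n$ (where the cycle, and hence $T_{a,n}$, is the identity). There is no structural obstacle beyond this.
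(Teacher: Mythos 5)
Your argument is correct, and the core step (the permutation identity $w(\mfs\cup\alpha)=(n,n-1,\dots,a)\,w(\mfs)$) is the same as the paper's: both rest on $\mfs\cup\alpha = (\mft^\mu\cup\alpha)w(\mfs) = \mft^\lambda(n,n-1,\dots,a)w(\mfs)$, which you even record verbatim in your parenthetical. Your explicit row-reading-word/one-line computation is a fleshed-out version of what the paper leaves implicit. The one place you genuinely diverge is the passage from the permutation identity to the Hecke-algebra identity: the paper asserts that $(n,n-1,\dots,a(\alpha))$ is a distinguished (minimal-length) left coset representative of $\mathfrak S_{n-1}$ in $\mathfrak S_n$ and invokes the standard length-additivity property of such representatives, whereas you prove length additivity from scratch by counting inversions in the one-line word $(b_1,\dots,b_{a-1},n,b_a,\dots,b_{n-1})$. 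The paper's route is shorter if you already have the coset-representative machinery at hand; yours is more elementary and self-contained, and arguably makes the claim ``one can check'' more honest. Both correctly identify $s_a s_{a+1}\cdots s_{n-1}$ as a reduced word for the cycle, giving $T_{(n,n-1,\dots,a)} = T_{a,n}$. The degenerate case $a=n$ that you flag is handled automatically in both arguments (empty cycle, $T_{n,n}=1$), so there is nothing left to fill in.
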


\begin{proof}  We have
$$
\mfs \cup \alpha =  (\mft^\mu \cup \alpha) w(\mfs) = \mft^\lambda  (n, n-1, \dots, a(\alpha)) \, w(\mfs).
$$
Therefore, $$w(\mfs \cup \alpha) = (n, n-1, \dots, a(\alpha))  w(\mfs).$$
Now one can check that $(n, n-1, \dots, a(\alpha))$ is a distinguished left coset representative of 
$\mathfrak S_{n-1}$ in $\mathfrak S_n$.  Therefore, 
$$
T_{w(\mfs \cup \alpha)} =  T_{(n, n-1, \dots, a(\alpha))}  T_{w(\mfs)} = T_{a(\alpha), n} T_{w(\mfs)}.
$$
\end{proof}

\begin{lemma}  Let $\lambda$ be a partition of $n$ and let  $\mft$ be a standard $\lambda$--tableau.  Then 
$T_{w(\mathfrak{t})}=d_\mathfrak{t}$. 
\end{lemma}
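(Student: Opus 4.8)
The plan is to prove this by induction on $n$, peeling off the node containing the entry $n$ and combining Corollary \ref{corollary:  d branching coefficients for the Hecke algebra} (which evaluates a single branching coefficient $\dd \mu \lambda n$) with Lemma \ref{permutation of  tableau adjoined addable node} (which factors the permutation $w(\mfs\cup\alpha)$). The base cases $n\le 1$ are trivial, since both $T_{w(\mft)}$ and $d_\mft$ are then the identity of $\mathcal H_n$.

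For the inductive step, fix $n\ge 2$, assume the statement for all standard tableaux of size $n-1$, and let $\mft$ be a standard $\lambda$--tableau with associated path $(\emptyset=\lambda^{(0)},\lambda^{(1)},\dots,\lambda^{(n)}=\lambda)$. Let $\alpha=\node_\mft(n)$ be the node of $\mft$ holding the entry $n$; it is a removable node of $\lambda$, and $\mu:=\lambda\setminus\alpha=\lambda^{(n-1)}$. Put $\mfs=\mft'=\mft\downarrow_{n-1}$, a standard $\mu$--tableau whose associated path is $(\lambda^{(0)},\dots,\lambda^{(n-1)})$. From the definition \eqref{lfdtt} of $d_\mft$, the leftmost factor is $\dd{\lambda^{(n-1)}}{\lambda^{(n)}}{n}=\dd{\mu}{\lambda}{n}$ and the remaining product is exactly $d_\mfs$, so $d_\mft=\dd{\mu}{\lambda}{n}\,d_\mfs$.

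Now I would rewrite both factors. By the induction hypothesis, $d_\mfs=T_{w(\mfs)}$. By Corollary \ref{corollary:  d branching coefficients for the Hecke algebra}, $\dd{\mu}{\lambda}{n}=T_{a(\alpha),n}$, where $a(\alpha)$ is the entry of $\mft^\lambda$ in the node $\alpha$. Hence $d_\mft=T_{a(\alpha),n}\,T_{w(\mfs)}$. On the other hand, since $\mft$ carries $n$ in node $\alpha$ and its restriction to $\{1,\dots,n-1\}$ is $\mfs$, we have $\mft=\mfs\cup\alpha$; Lemma \ref{permutation of  tableau adjoined addable node} then gives $T_{w(\mft)}=T_{w(\mfs\cup\alpha)}=T_{a(\alpha),n}\,T_{w(\mfs)}$. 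Comparing the two expressions yields $d_\mft=T_{w(\mft)}$, completing the induction.

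I do not expect a genuine obstacle here: the whole argument is bookkeeping on top of the two cited results. The only points that need a moment's care are the index conventions — that the leftmost factor in \eqref{lfdtt} really corresponds to the top step $\lambda^{(n-1)}\to\lambda^{(n)}$ of the path, that the path of $\mft\downarrow_{n-1}$ is the truncation of the path of $\mft$ (immediate from the path–tableau correspondence), and that the factorization $T_{w(\mfs\cup\alpha)}=T_{a(\alpha),n}T_{w(\mfs)}$ in Lemma \ref{permutation of  tableau adjoined addable node} matches the left-to-right order in which the branching coefficients are multiplied in $d_\mft$. Once these are lined up correctly, the identity drops out.
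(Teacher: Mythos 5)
Your proof is correct and uses exactly the same approach as the paper: induction on $n$, peeling off the node containing $n$ via Lemma \ref{permutation of  tableau adjoined addable node}, and identifying the leading factor with the branching coefficient from Corollary \ref{corollary:  d branching coefficients for the Hecke algebra}. The paper's own proof is just a terser version of the same bookkeeping.
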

\begin{proof}  Let $\alpha$ be the node of $\lambda$ containing the entry $n$ in $\mft$ and let $\mu = \lambda \setminus \alpha$.  Let $\mft'$ be the standard $\mu$ tableau obtained from $\mft$  by removing the node $\alpha$.  Let $a(\alpha)$ be the entry of $\mft^\lambda$  in the node $\alpha$.    
Then $\mft = \mft' \cup \alpha$, so by the previous lemma,
 $$
 T_{w(\mft)} = T_{a(\alpha), n}  T_{w(\mft')} = d_{\mu \to \lambda}\power n  T_{w(\mft')}.
 $$
By induction, we obtain the desired formula $T_{w(\mathfrak{t})}=d_\mathfrak{t}$. 
\end{proof}

\begin{corollary}
The bases of the cell modules and the cellular basis of the Hecke algebra
$\mathcal H_n$  given in \hyperref[proposition:  basis for cell modules of coherent cyclic tower]{Proposition~\ref*{proposition:  basis for cell modules of coherent cyclic tower}} and \hyperref[label:1]{Corollary~\ref*{label:1}}  coincide with the Murphy bases:
\begin{equation}  \label{equation:  formula for Murphy basis 1}
m^\lambda_\mft = m^\la_{\mft^\lambda} \, d_{\mft}, \qquad\text{ and}\qquad
m^\lambda_{\mfs \mft} = d_{\mfs}^* \,m_\lambda \,d_{\mft}.
\end{equation}
\end{corollary}

Next we turn to the cell filtration of induced cell modules and the branching factors $u_{\mu \to \nu}\power n$.

\begin{theorem}[Dipper--James, Murphy, Mathas]  \label{theorem:  cell filtration of induced cell modules for Hecke algebra}
Let $\mu \in \widehat{\mathcal H}_n$ and let $\cell {\mathcal H} n \mu$ be the corresponding cell module of $\mathcal H_n$.  Then $\Ind_{\mathcal H_n}^{\mathcal H_{n+1}}(\cell {\mathcal H} n \mu)$  has an order preserving filtration by cell modules of $\mathcal H_{n+1}$.
\end{theorem}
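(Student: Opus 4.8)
The plan is to deduce this from the branching theorem for Specht modules of Iwahori--Hecke algebras of Dipper and James~\cite{DJ:1986}, via Murphy's identification~\cite{Mu:1995} of the cell modules $\cell{\mathcal H}{n}{\mu}$ with Specht modules (with Mathas~\cite{Mat:1999} for the translation into the Murphy-basis language used here). Since $\mathcal H_{n+1}$ is free as a left $\mathcal H_n$--module, the functor $\Ind_{\mathcal H_n}^{\mathcal H_{n+1}}(-)=(-)\otimes_{\mathcal H_n}\mathcal H_{n+1}$ is exact, and $\Ind_{\mathcal H_n}^{\mathcal H_{n+1}}(\cell{\mathcal H}{n}{\mu})$ is free over $R$ of rank $(n+1)\,|\std\mu| = \sum_\alpha |\std{\mu\cup\alpha}|$, the sum being over the addable nodes $\alpha$ of $\mu$; this rank count already predicts that the sections of a cell filtration will be the cell modules $\cell{\mathcal H}{n+1}{\mu\cup\alpha}$, each occurring exactly once.

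To produce the filtration I would exploit the fact that $\Ind_{\mathcal H_n}^{\mathcal H_{n+1}}$ carries permutation modules to permutation modules. Writing $M^\mu=m_\mu\mathcal H_n$ and noting that $\mathfrak S_\mu\le\mathfrak S_n\le\mathfrak S_{n+1}$ so that $m_\mu=m_{(\mu,1)}$ in $\mathcal H_{n+1}$, one has $\Ind_{\mathcal H_n}^{\mathcal H_{n+1}}(M^\mu)\cong m_\mu\mathcal H_{n+1}=M^{(\mu,1)}$, the permutation module for the composition $(\mu_1,\dots,\mu_k,1)$ of $n+1$. By Murphy's standard-basis theory~\cite{Mu:1995}, $M^\mu$ has an order-preserving cell filtration whose top quotient is $\cell{\mathcal H}{n}{\mu}$ and whose submodule $N$ is cell-filtered by the $\cell{\mathcal H}{n}{\rho}$ with $\rho\rhd\mu$; likewise $M^{(\mu,1)}$ is cell-filtered. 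Applying the exact functor $\Ind_{\mathcal H_n}^{\mathcal H_{n+1}}$ to $0\to N\to M^\mu\to\cell{\mathcal H}{n}{\mu}\to 0$ realizes $\Ind_{\mathcal H_n}^{\mathcal H_{n+1}}(\cell{\mathcal H}{n}{\mu})$ as $M^{(\mu,1)}/\Ind_{\mathcal H_n}^{\mathcal H_{n+1}}(N)$. Now run a downward induction on $\mu$ in the dominance order: by the inductive hypothesis each $\Ind_{\mathcal H_n}^{\mathcal H_{n+1}}(\cell{\mathcal H}{n}{\rho})$ with $\rho\rhd\mu$ is cell-filtered, hence so is $\Ind_{\mathcal H_n}^{\mathcal H_{n+1}}(N)$, its sections being among the $\cell{\mathcal H}{n+1}{\rho\cup\alpha}$ with $\rho\rhd\mu$; a short computation with partial sums shows $\rho\cup\alpha\rhd(\mu,1)$ for every such $\rho$ and every addable node $\alpha$, which is what is needed to match up the two cell filtrations and conclude that $M^{(\mu,1)}/\Ind_{\mathcal H_n}^{\mathcal H_{n+1}}(N)$ is cell-filtered with sections exactly the $\cell{\mathcal H}{n+1}{\mu\cup\alpha}$. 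Listing the addable nodes $\alpha_1,\dots,\alpha_q$ of $\mu$ from top to bottom makes the resulting filtration order preserving, since then $\mu\cup\alpha_1\rhd\dots\rhd\mu\cup\alpha_q$.

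The hard part, and the reason this theorem properly belongs to Dipper--James rather than to soft homological algebra, is that the filtration is required over the ground ring $R$ and not merely over its fraction field $F$ (over $F$ the algebras are semisimple and everything is automatic); the passage from $F$ to $R$ forces one to exhibit an explicit $R$--split filtration, and this is the combinatorial core of~\cite{DJ:1986} --- the semistandard basis of $m_\mu\mathcal H_{n+1}$ together with the bookkeeping of semistandard tableaux. I expect that the cleanest self-contained alternative is to transcribe, with the obvious modifications, the argument given for the restriction functor in Appendix~\ref{appendix: cell filtrations}. Finally, for the applications in Section~\ref{section: applications} one wants the filtration in explicit form: the isomorphism of the $j$-th subquotient with $\cell{\mathcal H}{n+1}{\mu\cup\alpha_j}$ should carry the generator $m^{\mu\cup\alpha_j}_{\mathfrak t^{\mu\cup\alpha_j}}$ to the class of $m^{\mu}_{\mathfrak t^{\mu}}\otimes u_{\mu\to\mu\cup\alpha_j}^{(n+1)}$ for an explicit branching coefficient $u_{\mu\to\mu\cup\alpha_j}^{(n+1)}\in\mathcal H_{n+1}$, which one reads off by following the generator $m_\mu$ through the identifications above.
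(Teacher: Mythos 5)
Your route and the paper's are genuinely different, and there is a gap at the crucial step. The paper's proof is a careful transcription of Mathas's proof from~\cite{Mat:2009}: one identifies $\Ind(M^\mu)$ with $M^{\mu\cup\omega}$, defines the submodule $N_{m-1}$ explicitly in terms of the semistandard tableau basis of $M^{\mu\cup\omega}$, proves the inclusion $N_{m-1}\subseteq\Ind(M_{m-1})$ by the concrete Lemma~\ref{mathas lemma 3.5}, upgrades it to equality by the rank count of Lemma~\ref{linear algebra lemma}, and then reads off the filtration of the quotient by semistandard tableaux whose restriction is $\mathsf T^\mu$. You instead propose to apply $\Ind$ to $0\to N\to M^\mu\to\cell{\mathcal H}{n}{\mu}\to 0$, cell-filter $\Ind(N)$ by a downward induction on dominance, and then ``match up'' the filtration of $\Ind(N)$ with the Murphy filtration of $M^{\mu\cup\omega}$ using the observation $\rho\cup\alpha\rhd\mu\cup\omega$ for $\rho\rhd\mu$.

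The matching step is precisely where the argument is incomplete. Knowing that $\Ind(N)\subseteq M^{\mu\cup\omega}$, that both carry cell filtrations, that the labels on $\Ind(N)$ all strictly dominate $\mu\cup\omega$, and that the quotient is $R$-free of the predicted rank, does not by itself yield a cell filtration of the quotient over $R$. There is no a priori reason the embedding $\Ind(N)\hookrightarrow M^{\mu\cup\omega}$ is aligned with the Murphy filtration: one would need a (nontrivial) lemma to the effect that a cell-filtered submodule whose labels lie in an upward-closed set $\Gamma$ coincides with the canonical $\Gamma$-piece of any ordered cell filtration of the ambient module, and that the complementary quotient is then cell-filtered. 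Mathas avoids this entirely by producing the submodule concretely: his Lemma~3.5 shows $N_{m-1}\subseteq\Ind(M_{m-1})$ by exhibiting $m_{\mathsf U\mft^\nu}$ as an explicit right multiple of $m_{\mathsf S\mft^\lambda}$, and then the rank count forces equality. This concreteness is also what the rest of the paper requires: the branching coefficients $u_{\mu\to\nu}^{(n+1)}$ of Corollary~\ref{corollary: u branching coefficients for the Hecke algebra} are extracted directly from the explicit isomorphism $\varphi$, and you would not get them from the soft argument. Your closing suggestion, that the proof of Theorem~\ref{theorem cell filtration of restricted cell modules} in Appendix~\ref{appendix: cell filtrations} transcribes ``with the obvious modifications'' to the induction case, is also not borne out: that appendix argument is built around Garnir relations for restriction and does not straightforwardly dualize; the paper instead imports Mathas's semistandard-tableau argument for induction.
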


\begin{corollary}  \label{corollary:  Hecke algebras strongly coherent}
The sequence of Hecke algebras $(\mathcal H_n)_{n \ge 0}$ is  a strongly coherent tower of cyclic cellular algebras.
\end{corollary}

\begin{proof}  Combine \hyperref[theorem cell filtration of restricted cell modules]{Theorem~\ref*{theorem cell filtration of restricted cell modules}},  \hyperref[theorem:  cell filtration of induced cell modules for Hecke algebra]{Theorem~\ref*{theorem:  cell filtration of induced cell modules for Hecke algebra}} and \hyperref[corollary Hecke algebras cyclic cellular]{Corollary~\ref*{corollary Hecke algebras cyclic cellular}}.
\end{proof}

Let $\alpha = \alpha_1, \alpha_2, \dots, \alpha_p = \omega$ be the list of addable nodes of $\mu$, listed from top to bottom.  Let $\nu\power i = \mu \cup \alpha_i$.    Note that $i \le  j$ if and only if $\nu\power i \unrhd \nu\power j$.    The cell modules of $\mathcal H_{n+1}$  occurring as subquotients in the cell filtration of 
 $\Ind_{\mathcal H_n}^{\mathcal H_{n+1}}(\cell {\mathcal H} n \mu)$ are $\cell {\mathcal H} {n+1} {\nu\power i}$  for $1 \le i \le p$.

One proof of  \hyperref[theorem:  cell filtration of induced cell modules for Hecke algebra]{Theorem~\ref*{theorem:  cell filtration of induced cell modules for Hecke algebra}} is obtained by combining   ~\cite[Sect.~7]{MR812444}  with ~\cite[Theorem 5.3]{MR1327362}.  A different proof was recently given by Mathas ~\cite{MR2531227};  this proof is based on Murphy's \hyperref[theorem: Murphy's theorem cell filtration of permutation module]{Theorem~\ref*{theorem: Murphy's theorem cell filtration of permutation module}} on the existence of a cell filtration of permutation modules of $\mathcal H_n$.   We are going to sketch Mathas' proof in order to point out how the branching factors
$u_{\mu \to \nu}^{(n+1)}$ can be extracted from it.

\begin{definition}
Let $\lambda,\mu\vdash n$ and $\mathsf{T}:\lambda\to\mathbb{N}$ be a $\lambda$--tableau. Then:
\begin{enumerate}
\item $\mathsf{T}$ is a tableau of \emph{type} $\mu$ if for all $i \ge 1$, $\mu_i=\sharp\{a\in\lambda\mid \mathsf{T}(a) = i \}$.
\item $\mathsf{T}$ is \emph{semistandard} if the entries of $\mathsf{T}$ are weakly increasing along each row from left to right and strictly increasing  along each column from top to bottom. 
\end{enumerate}
Let $\mathcal{T}^{\SStd}_\mu(\lambda)$ be the set of semistandard $\lambda$-tableaux of type $\mu$ and $\mathcal{T}^{\SStd}_\mu(\widehat{\mathcal{H}}_n) =\bigcup_{\lambda\in\widehat{\mathcal{H}}_n}\mathcal{T}^{\SStd}_\mu(\lambda)$ be the set of all semistandard tableaux of type $\mu$. 
\end{definition}

Let $\lambda,\mu\vdash n$ and $\mathfrak{t}\in\mathcal{T}^{\Std}(\lambda)$. Define $\mu(\mathfrak{t})$ to be the tableau obtained from $\mathfrak{t}$ by replacing each entry $j$ in $\mathfrak{t}$ with $i$ if $j$ appears in the $i^{\textup{th}}$ row of $\mathfrak{t}^\mu$.   If
$\semistd \mu \lambda \ne \emptyset$,  then $\lambda \unrhd \mu$.  Note that there is a unique element $\mathsf T^\mu \in \semistd \mu \mu$, namely $\mathsf T^\mu = \mu(\mft^\mu)$.

If $\mathsf{S}\in \mathcal{T}^{\SStd}_\mu(\lambda)$ and $\mathfrak{t}\in\mathcal{T}^{\Std}(\lambda)$, let 
\begin{align} \label{equation: definition of c S t}
m_{\mathsf{S}\mathfrak{t}}=\sum_{\substack{\mathfrak{s}\in\mathcal{T}^{\Std}(\lambda)\\ \mu(\mathfrak{s})=\mathsf{S}}} q^{\el(w(\mathfrak{s}))} m^\lambda_{\mfs \mft} 
\end{align}

Let $\mu\in\widehat{\mathcal{H}}_n$. Define the \textit{permutation module} 
\begin{align*}
M^\mu=m_\mu \mathcal{H}_n.
\end{align*}
\begin{theorem}[\mbox{See~\cite[Theorem 7.2]{MR1327362}}] \label{theorem: Murphy's theorem cell filtration of permutation module}
If $\mu\in\widehat{\mathcal{H}}_n$, then:
\begin{enumerate}[label=(\arabic{*}), ref=\arabic{*},leftmargin=0pt,itemindent=1.5em]
\item $M^\mu$ is a free as an $R$-module, with basis
\begin{align*}
\left\{m_{\mathsf{S}\mathfrak{t}}\ \big \vert \ \mathsf{S}\in \mathcal{T}^{\SStd}_\mu(\lambda), \mathfrak{t}\in  \mathcal{T}^{\Std}(\lambda)\text{ for }\lambda\in\widehat{\mathcal{H}}_n
\right\}.
\end{align*}
\item Suppose that $\mathcal{T}^{\SStd}_\mu(\widehat{\mathcal{H}}_n)= \{\mathsf{S}_1,\ldots,\mathsf{S}_k\}$ ordered so that $i\le j$ whenever $\lambda^{(i)}\unrhd\lambda^{(j)}$, where $\lambda^{(i)}=\Shape(\mathsf{S}_i)$. Let $M_i$ be the $R$-submodule of $M^\mu$ spanned by the elements $\{m_{\mathsf{S}_j\mathfrak{t}}\mid j\le i\text{ and }\mathfrak{t}\in\mathcal{T}^{\Std}(\lambda^{(j)})\}$. Then 
\begin{align}\label{zlkfo}
\{0\}=M_0\stackrel{\lambda^{(1)}}{\subseteq}M_{1}\stackrel{\lambda^{(2)}}{\subseteq}\cdots \stackrel{\lambda^{(m)}}{\subseteq} M_m=M^\mu
\end{align}
is a cell module filtration of $M^\mu$.
The isomorphism $M_j/M_{j-1} \cong \Delta_{\mathcal{H}_n}^{\lambda^{(j)}}$ is determined by
\begin{align}\label{zlkfo2}
m_{\mathsf{S}_j\mathfrak{t}}+M_{j-1}\mapsto  m^{\lambda \power j}_\mft, \quad \text{for $\mathfrak{t}\in\mathcal{T}^\Std(\lambda^{(j)})$}.
\end{align}
\end{enumerate}
\end{theorem}

\begin{remark}
In \hyperref[theorem: Murphy's theorem cell filtration of permutation module]{Theorem~\ref*{theorem: Murphy's theorem cell filtration of permutation module}},  we have
$\mathsf S_m = \mathsf T^\mu$ and $\la\power m = \mu$.
\end{remark}

Since $\mathcal H_{n+1}$ is free of rank $n+1$ as a left $\mathcal H_n$--module, it follows that the induction functor \break $\Ind_{\mathcal H_n}^{\mathcal H_{n+1}}(\,\underline{\phantom{xx}}\,) =  \,\underline{\phantom{xx}}\,\otimes_{\mathcal H_n} \mathcal H_{n+1}$ is exact.  We will write $\Ind$ for this functor in the following discussion.   Because of exactness, we have
\begin{equation}
\Ind(M_j)/\Ind(M_{j-1}) \cong \Ind(M_j/M_{j-1}) \cong \Ind(\cell {\mathcal H} n {\lambda\power j}).
\end{equation}
In particular
\begin{equation}
\Ind(M^\mu)/\Ind(M_{m-1}) \cong \Ind(\cell {\mathcal H} n \mu).
\end{equation}
Mathas' proof of \hyperref[theorem:  cell filtration of induced cell modules for Hecke algebra]{Theorem~\ref*{theorem:  cell filtration of induced cell modules for Hecke algebra}} proceeds by exhibiting a cell filtration of $\Ind(M^\mu)/\Ind(M_{m-1}) $.

Another consequence of the freeness of  $\mathcal H_{n+1}$ as a left $\mathcal H_n$--module is the following:  if $M$ is a right ideal in $\mathcal H_n$, then
\begin{equation}
\Ind(M) = M \otimes_{\mathcal H_n} \mathcal H_{n+1} \cong M \mathcal H_{n+1},
\end{equation}
via $x \otimes h \mapsto xh$.  We will simply identify $\Ind(M)$ with $M \mathcal H_{n+1}$.  
  Recall that $\omega$ denotes the lowest addable node of $\mu$, and
note that $m_\mu = m_{\mu \cup \omega}$.  Hence,
\begin{equation}
\Ind(M^\mu) = M^\mu \mathcal H_{n+1} = m_\mu \mathcal H_{n+1} = m_{\mu \cup \omega} \mathcal H_{n+1} = M^{\mu \cup \omega}.
\end{equation}

To proceed, we need to relate semistandard tableaux of size $n$ and type $\mu$ and semistandard tableaux of size $n+1$ and type $\mu \cup \omega$.   Let $\el$ denote the number of non--zero parts of $\mu$,  so that $\omega = (\el + 1, 1)$.    If $\mathsf S$ is a semistandard tableau of shape $\lambda$ and type $\mu$,  and $\beta$ is an addable node of $\la$, then we define the semistandard tableau
$S \cup \beta$ of shape $\lambda \cup \beta$ and type $\mu \cup \omega$  by 
$S\cup \beta (x) = S(x)$ if $ x \in [\lambda]$  and $S(\beta) = \el + 1$.    We write $\semistd {\mu \cup \omega} S$ for the set of semistandard tableaux $S \cup \beta$ as $\beta$ ranges over addable nodes of $\lambda$. 
It is easy to see that
every $\mathsf U \in \semistd {\mu \cup \omega}{\widehat{\mathcal H}_{n+1}}$ is obtained as $\mathsf S \cup \beta$ for some $\mathsf S$ and some $\beta$.   

Recall that $S_1, \dots, S_m = T^\mu$ is the list of all semistandard tableaux of size $n$ and type $\mu$, listed so that $\Shape(S_i) \unrhd \Shape(S_j)$ implies $i \le j$.
Mathas defines the following $R$--submodules of $M^{\mu \cup \omega}$:
\begin{equation}
N_i = \spn \{m_{\mathsf U \mfv} \ \big \vert \   \mathsf U \in \semistd {\mu \cup \omega} {S_j},  \mfv \in \std {\Shape(\mathsf U)} \text{ for } 1 \le j \le i  \}
\end{equation}

\begin{lemma}[{\cite[Lemma~3.5]{MR2531227}}]  \label{mathas lemma 3.5}
Let $\mathsf S \in \semistd \mu \lambda$, 
$\mathsf U \in \semistd {\mu \cup \omega} {\mathsf S}$, and $\nu = \Shape(\mathsf U)$.  Then
$m_{\mathsf U \mft^\nu} \in  m_{\mathsf S \mft^\lambda} \mathcal H_{n+1}$.  
\end{lemma}

\begin{proposition}[{cf.~\cite[Theorem 3.6]{MR2531227}}]  $N_{m-1} = \Ind(M_{m-1})$. 
\end{proposition}

\begin{proof}  We have $N_m = \Ind(M_m) = M^{\mu \cup \omega}$.  
By \hyperref[mathas lemma 3.5]{Lemma~\ref*{mathas lemma 3.5}}, we have $N_{m-1} \subseteq \Ind(M_{m-1})$.  Note that
\break $M^{\mu \cup \omega}/\Ind(M_{m-1}) \cong \Ind(\cell {\mathcal H} {n} \mu)$ is free as an $R$--module of rank $(n+1) f_\mu$,  where $f_\mu$ denotes the number of standard tableaux of shape $\mu$.
On the other hand,  by applying \hyperref[theorem: Murphy's theorem cell filtration of permutation module]{Theorem~\ref*{theorem: Murphy's theorem cell filtration of permutation module}}  to $M^{\mu \cup \omega}$,  we see that $M^{\mu \cup \omega}/N_{m-1}$ is free with basis
\begin{equation} \label{eqn: basis of induced perm module mod N}
\begin{aligned}
& \left\{m_{\mathsf U \mfv} + N_{m-1} \ \big \vert \   \mathsf U \in \semistd {\mu \cup \omega} {\mathsf T^\mu},  \mfv \in \std {\Shape(\mathsf U)}  \right\}  \\
&=  \left\{m_{\mathsf T^\mu \cup \beta,  \mfv} + N_{m-1} \ \big \vert \   \beta \text{ is an addable node of $\mu$}, \mfv \in \std{\mu \cup \beta} \right\}.
 \end{aligned}
 \end{equation}
The cardinality of this basis is $\sum f_{\mu \cup \beta}$,  where the sum is over addable nodes $\beta$ of $\mu$.  Using the representation theory of the symmetric groups over the complex numbers, we have that
$$
\sum f_{\mu \cup \beta} = \dim \Ind(V^\mu) = (n+1) f_\mu,
$$
where $V^\mu$ is the simple $\mathbb C \mathfrak S_n$ module labelled by $\mu$.   Thus
$M^{\mu \cup \omega}/\Ind(M_{m-1})$ and $M^{\mu \cup \omega}/N_{m-1}$ are both free $R$ modules of the same rank.  Since $N_{m-1} \subseteq \Ind(M_{m-1}) \subseteq M^{\mu \cup \omega}$, \hyperref[linear algebra lemma]{Lemma~\ref*{linear algebra lemma}} below shows that $N_{m-1} = \Ind(M_{m-1})$.
\end{proof}

\begin{lemma} \label{linear algebra lemma}  Let $R$ be an integral domain and let
$A \subseteq B \subseteq C$ be $R$--modules with $C/A$ and $C/B$ both free of the same rank.
Then $A = B$.
\end{lemma}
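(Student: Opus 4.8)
The plan is to pass to the field of fractions $F$ of $R$ and exploit the elementary fact that a surjective linear map between finite-dimensional vector spaces of equal dimension is an isomorphism. Since $A \subseteq B$, the canonical quotient map $C/A \to C/B$ is a well-defined $R$-module surjection with kernel $B/A$, so there is a short exact sequence
\[
0 \longrightarrow B/A \longrightarrow C/A \longrightarrow C/B \longrightarrow 0.
\]
Because $C/A$ is free it is torsion-free, and hence so is its submodule $B/A$.

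Next I would tensor this sequence with $F$ over $R$. As $F$ is flat over $R$, the sequence
\[
0 \longrightarrow (B/A)\otimes_R F \longrightarrow (C/A)\otimes_R F \longrightarrow (C/B)\otimes_R F \longrightarrow 0
\]
remains exact. Here $(C/A)\otimes_R F$ and $(C/B)\otimes_R F$ are $F$-vector spaces of the same finite dimension, namely the common rank $n$ of $C/A$ and $C/B$ (finite in the application, where $n = (n+1)f_\mu$), and the right-hand map is surjective; a surjection of $F$-vector spaces of equal finite dimension is bijective, so $(B/A)\otimes_R F = 0$. This means precisely that $B/A$ is a torsion $R$-module. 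A module that is both torsion-free and torsion is zero, so $B/A = 0$, i.e.\ $A = B$.

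There is no genuine obstacle in this argument; it is entirely formal. The only point that deserves a moment's attention is the finiteness of the rank: for free modules of infinite rank a surjection of equal-rank free modules need not be injective (one can split off a basis element), so the hypothesis must be read with $n < \infty$, which is exactly the situation in which the lemma is invoked.
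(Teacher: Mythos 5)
Your proof is correct and follows essentially the same route as the paper: both form the short exact sequence $0 \to B/A \to C/A \to C/B \to 0$ and deduce that the quotient map is an isomorphism, the only difference being that you make explicit (by passing to the field of fractions) the standard fact the paper merely invokes, namely that over an integral domain a spanning set of a free module of rank $n$ with $n$ elements is a basis. Your remark about finiteness of the rank is a fair caveat --- the paper's proof tacitly assumes it by writing a finite basis $\{x_1,\dots,x_s\}$, and it does hold in the application.
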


\ignore{
\begin{proof}  Consider 
$$
0 \to B/A \to C/A \stackrel{\pi}{\to}  C/B \to 0.
$$
If $\{x_1, \dots, x_s\}$ is a basis of $C/A$  then $\{\pi(x_i)\}$ spans $C/B$.  Since $C/B$ has a basis of the same cardinality and $R$ is an integral domain, it follows that $\{\pi(x_i)\}$ is a basis of $C/B$ and
$\pi$ is an isomorphism.  Hence $B/A = 0$.  
\end{proof}
}

We can now exhibit an order preserving cell filtration of $M^{\mu \cup \omega}/N_{m-1} \cong \Ind(\cell {\mathcal H} n \mu)$.   In the following, we write $N = N_{m-1}$.   Recall that
$\alpha = \alpha_1, \alpha_2, \dots, \alpha_p = \omega$ is the list of addable nodes of $\mu$ 
listed from top to bottom and $\nu\power j = \mu \cup \alpha_j$. 
Let $J^0 = (0)$ and for $1 \le i \le p$, define  $J^i \subseteq M^{\mu \cup \omega}/N$ by
$$
J^i  = \spn\left\{m_{{\mathsf T}^\mu \cup \alpha_j,  \mfv}  + N  \ \big \vert \   j \le i \text{ and }  \mfv \in \std {\mu \cup \alpha_j } \right\}.
$$
\begin{theorem}[{\cite[Corollary 3.7]{MR2531227}}] \label{theorem: explicit form of cell filtration for induced cell module of Hecke algebra}
Each $J^i$ is an $\mathcal H_{n+1}$ submodule of $M^{\mu \cup \omega}/N$, 
\begin{equation} \label{eqn: explicit cell filtration of induced perm module mod N}
(0) = J^0 \subseteq J^1 \subseteq \cdots \subseteq J^p = M^{\mu \cup \omega}/N,
\end{equation}
and $J^i/J^{i-1} \cong \cell {\mathcal H} {n+1} {\nu \power i}$.  
\end{theorem}

This completes the sketch of Mathas' proof of \hyperref[theorem:  cell filtration of induced cell modules for Hecke algebra]{Theorem~\ref*{theorem:  cell filtration of induced cell modules for Hecke algebra}}.  It remains to see how the cell filtration
\eqref{eqn: explicit cell filtration of induced perm module mod N} carries over to $\Ind(\cell {\mathcal H} n \mu)$, and to identify the branching factors $u_{\mu \to \nu}^{(n+1)}$. 
The isomorphism  $\varphi: M^{\mu \cup \omega}/N  \to  \Ind(\cell {\mathcal H} n \mu)$ is the composite of the isomorphism $M^{\mu \cup \omega}/N  \cong  \Ind(M^\mu/M_{m-1})$, given by
$$
m_\mu h + N  \mapsto (m_\mu + M_{m-1}) \otimes h,
$$
and the isomorphism  $\Ind(M^\mu/M_{m-1}) \cong \Ind(\cell {\mathcal H} n \mu)$ given by
$$
(m_{\mathsf T^\mu \mft^\mu} + M_{m-1}) \otimes h \mapsto  m^\mu_{\mft^\mu} \otimes h.
$$
Since $m_{\mathsf T^\mu \mft^\mu} = m_\mu$, the composite isomorphism is given by
\begin{equation} \label{equation: description of isomorphism induced permutation module  mod N to induced cell module}
\varphi: m_\mu h + N  \mapsto m^\mu_{\mft^\mu} \otimes h.
\end{equation}
We need to examine how this isomorphism acts on the basis~\eqref{eqn: basis of induced perm module mod N} of $M^{\mu \cup \omega}/N$.  

Let $\beta$ be an addable node of $\mu$ and let $\nu = \mu \cup \beta$.   Suppose that $\beta$ is in row $r$, and let $a = \sum_{j = 1}^r \nu_j = 1+ \sum_{j = 1}^r \mu_j$.  Recall that $T_{i, i} = 1$ and  if $i > j$,  then
 $T_{i, j} = T_{(j, j+1, \dots, i)} = T_{i-1} T_{i-2} \cdots T_j$.  Define 
  \begin{equation}\begin{aligned}
D(\beta) &=  \sum_{k = 0}^{\mu_r}  q^k T_{a, a-k} \\
 &= 1 + q \,T_{a-1} + q^2 \,T_{a-1} T_{a-2} + \cdots + q^{\mu_r} \,T_{a-1} T_{a-2} \cdots T_{a - \mu_r}.
\end{aligned}
 \end{equation}
In particular, $D(\omega) = 1$.  

\ignore{
Let $\beta$ be an addable node of $\mu$ and let $\nu = \mu \cup \beta$.   Suppose that $\beta$ is in row $r$, and let 
 $b = \sum_{j = 1}^{r-1} \mu_j + 1$ and $a = \sum_{j = 1}^r \mu_j$.    (If $\beta \ne \omega$, then $b$ and $a$  are the first and last entries in row $r$ of $t^\mu$;   if $\beta = \omega$, then $b = n+1$ and $a = n$.)    Recall that $T_{i, i} = 1$ and  if $i > j$,  then
 $T_{i, j} = T_{(j, j+1, \dots, i)} = T_{i-1} T_{i-2} \cdots T_j$.  Define 
 \begin{equation}\begin{aligned}
 D(\beta) &= \sum_{j = 0}^{a+1 - b}  q^{j} \,T_{a+1, a+1 - j} \\
 &= 1 + q \,T_a + q^2 \,T_a T_{a-1} + \cdots + q^{a+1 -b} \,T_a T_{a-1} \cdots T_b.
 \end{aligned}
 \end{equation}
In particular, $D(\omega) = 1$.  
}
 
  The following lemma is a special case of ~\cite{MR2531227},  Lemmas 3.4 and 3.5.

 \begin{lemma} \mbox{}  \label{lemma:   explicit isomorphism induced perm module mod N with induced cell module}
 \begin{enumerate}
 \item  $m_\nu =  T_{n+1, a}\inv  m_\mu  T_{n+1, a}  D(\beta)$.  
 \item  $w(\mft^\mu \cup \beta)  =   (n+1, n, \dots, a)$.  Thus $m^\nu_{\mft^\nu, \mft^\mu \cup \beta}  = m_\nu (T_{n+1, a})^*$.  
 \item  $m_{\mathsf T^\mu \cup \beta , \mft^\nu} = q^{n +1 -a}\, m_\mu \, T_{n+1, a}  D(\beta)$. 
 \item  The isomorphism  $\varphi: M^{\mu \cup \omega}/N \to \Ind(\cell {\mathcal H} n \mu)$ 
  satisfies
  $$\varphi( m_{\mathsf T^\mu \cup \beta , \mft^\nu}   + N) =  m^\mu_{\mft^\mu} \otimes  q^{n+1 -a}\, T_{n+1, a} \, D(\beta).$$ 
\end{enumerate}
 \end{lemma}

 \begin{proof} If $\beta = \omega$, then  $T_{n+1, a} = D(\beta) = 1$, and all the statements are evident.  Suppose that $\beta \ne \omega$.
 Let $\nu'$ be the composition $\nu' = (\mu_1, \dots, \mu_r, 1, \mu_{r+1}, \dots, \mu_\el)$.   One has $T_{n+1, a}\inv  T_j  T_{n+1, a}  = T_{j+1}$  if
 $a \le j \le n-1$.  This follows from the identity in the braid group:
 $$
( \sigma_{a}\inv \cdots \sigma_n\inv) \sigma_j (\sigma_n \cdots \sigma_{a}) = \sigma_{j+1},
 $$
 for $a \le j \le n-1$,  where the elements $\sigma_i$ are the Artin generators of the braid group.
 From this, we obtain:
 $$
 m_{\nu'} =  T_{n+1, a}\inv  m_\mu  T_{n+1, a}.
 $$
 Note that $\mathfrak S_{\nu'} \subset \mathfrak S_\nu$  and $D(\beta) = \sum  q^{\el(x)} T_x$, as where the sum is over the distinguished right coset representatives of 
 $\mathfrak S_{\nu'}$ in  $\mathfrak S_\nu$.  Hence $m_\nu = m_{\nu'} D(\beta)$, and part (1) follows.
 The first assertion in part (2) is evident and the second statement follows because
 $T_{(n+1, \dots, a)} = T_{a, n+1} = (T_{n+1, a})^*$.  
 
 For part (3),  $m_{\mathsf T^\mu \cup \beta , \mft^\nu} = \sum_{\mfs} q^{\el(s)} (T_{d(\mfs)} )^*m_\nu$, 
 where the sum is over standard tableaux $\mfs$ of shape $\nu$ such that $(\mu \cup \omega)(\mfs) = \mathsf T^\mu \cup \beta$,
 according to the definition~\eqref{equation: definition of c S t}.   But there is only one such standard tableau, namely $\mfs = \mft^\mu \cup \beta$.  
 Applying parts (1) and  (2),   
 $$
 \begin{aligned}
 m_{\mathsf T^\mu \cup \beta , \mft^\nu}  &= q^{n+1-a} \,T_{n+1, a} \,m_\nu \\
 &= q^{n+1-a} \, m_\mu \, T_{n+1, a} \, D(\beta).
 \end{aligned}
 $$ 
 Part (4) follows from part (3) together with the description of $\varphi$ in Equation~\eqref{equation: description of isomorphism induced permutation module  mod N to induced cell module}.
 \end{proof}
 
\begin{corollary}  \label{corollary: u branching factors for the Hecke algebra}
The branching factors $u_{\mu \to \nu}^{(n+1)}$ can be chosen as
follows:   Let $\mu \in \widehat{\mathcal H}_n$  and $\nu \in  \widehat{\mathcal H}_{n+1}$ with
$\mu \to \nu$.  Let $\beta = \nu \setminus \mu$.   Suppose that $\beta$ is in row $r$ and let
$a = \sum_{j= 1}^r \nu_j$.    
Then:
\begin{equation}
\begin{aligned}
u_{\mu \to \nu}^{(n+1)} &=   T_{n+1, a} D(\beta)  =     T_{n+1, a}  \sum_{k = 0}^{\mu_r}  q^k T_{a, a-k} 
= \sum_{k = 0}^{\mu_r}  q^k T_{n+1, a-k}  \\
\end{aligned}
\end{equation}
\end{corollary}

\begin{proof}  In \hyperref[theorem: explicit form of cell filtration for induced cell module of Hecke algebra]{Theorem~\ref*{theorem: explicit form of cell filtration for induced cell module of Hecke algebra}}, we have for $j \ge 1$, 
$$
J^j =  (m_{\mathsf{T}^\mu \cup \alpha_j, \mft^{\nu\power j} } +N) \mathcal H_{n+1}  +  J^{j-1}.
$$
Set $I^j = \varphi(J^j)$.   Then  $I^j/I^{j-1} \cong  \cell {\mathcal H} {n+1} {\nu \power j}$ and
$$
I^j =  \varphi(m_{\mathsf{T}^\mu \cup \alpha_j, \mft^{\nu\power j} } + N) \mathcal H_{n+1}  +  I^{j-1}.
$$
Hence, the statement follows from \hyperref[lemma:   explicit isomorphism induced perm module mod N with induced cell module]{Lemma~\ref*{lemma:   explicit isomorphism induced perm module mod N with induced cell module}}, part (4).
\end{proof}

 \section{Algebras with  Jones basic construction} \label{framework axioms}
 \label{section:  algebras with basic construction}
 \subsection{Cellularity and the Jones basic construction: a correction}
 \label{subsection:  cellularity and jones setting and correction}
 In ~\cite{MR2794027, MR2774622},  Goodman and Graber developed a theory of cellularity for algebras with a  Jones basic construction.  Examples of such algebras include the Birman--Murakami--Wenzl, Brauer, partition, and Jones--Temperley--Lieb algebras, among others.   There was, however, a mistake in the proof in ~\cite{MR2794027}  that these algebras constitute coherent towers of cellular algebras.  In this section, we will review the setting of ~\cite{MR2794027, MR2774622},  describe the error, and explain what needs to be done to correct it.
 
 The setting in ~\cite{MR2794027},  as modified in ~\cite{MR2774622} is the following.  First recall that 
 an {\em essential idempotent} in an algebra $A$ over a ring $R$ is an element $e$ such that 
$e^2 = \delta e$ for some non--zero $\delta \in R$.   Let $R$ be an integral domain with field of fractions $F$ and consider two towers of algebras with common multiplicative identity,
\begin{align}\label{p-r}
A_0\subseteq A_1\subseteq A_2\subseteq\cdots \qquad \text{and} \qquad
H_0\subseteq H_1\subseteq H_2\subseteq\cdots.
\end{align}
It is assumed that the two towers satisfy the following list of axioms:

\medskip
\begin{enumerate}
\item \label{axiom: involution on An}  There is an algebra involution $*$  on $\cup_n A_n$ such that $(A_n)^* = A_n$, and likewise, there is 
an algebra involution $*$  on $\cup_n H_n$ such that $(H_n)^* = H_n$.
\item  \label{axiom: A0 and A1}  
$A_0 = H_0 = R$   and $A_1 = H_1$  (as algebras with involution).
\item \label{axiom:  idempotent and Hn as quotient of An}
 For $n \ge 2$,  $A_n$ contains an  essential idempotent $e_{n-1}$ such that $e_{n-1}^* = e_{n-1}$ and
\break $A_n/(A_n e_{n-1} A_n)  \cong H_n$ as algebras with involution.

\item \label{axiom: en An en} For $n \ge 1$,   $e_{n}$ commutes with $A_{n-1}$ and $e_{n} A_{n} e_{n} \subseteq  A_{n-1} e_{n}$.
\item  \label{axiom:  An en}
For $n \ge 1$,  $A_{n+1} 	e_{n} = A_{n} e_{n}$,  and the map $x \mapsto x e_{n}$ is injective from
$A_{n}$ to $A_{n} e_{n}$.
\item \label{axiom: e(n-1) in An en An} For $n \ge 2$,   $e_{n-1} \in A_{n+1} e_n A_{n+1}$.
\item  \label{axiom: semisimplicity}
For all $n$,  $A_n^F : = A_n \otimes_R F$   is split semisimple.  
\item \label{axiom Hn coherent}  $(H_n)_{n \ge 0}$ is a strongly coherent tower of cellular algebras.
\setcounter{saveenumi}{\theenumi}
\end{enumerate}

\medskip

Under these hypotheses, it is claimed in ~\cite{MR2794027, MR2774622}  that $(A_n)_{n \ge 0}$  is a strongly coherent tower of cellular algebras.  The strategy of the proof is to show by induction that the following  statements hold for all $n \ge 0$:
\begin{itemize}
\item  $A_n$ is a cellular algebra.
\item   For $2 \le n$,   $J_n = A_n e_{n-1} A_n$ is a cellular ideal in $A_n$.  
\item   For $2 \le n$,  the cell modules of $J_n$  are of the form $\Delta =  \Delta' \otimes_{A_{n-2}} e_{n-1}  A_n$, where $\Delta'$ is a cell module of $A_{n-2}$.
\item  The finite tower $(A_k)_{0 \le k \le n}$  is strongly coherent.  
\end{itemize}
For $n \le 1$  these statements are evident.   Assuming the statements hold for some fixed $n \ge 1$, one first proves that $J_{n+1}$ is a cellular ideal in $A_{n+1}$  with cell modules of the form
$\Delta = \Delta' \otimes_{A_{n-1}}  e_n A_{n+1}$,  where $\Delta'$ is a cell module of $A_{n-1}$.  It follows from \hyperref[lemma: extensions of cellular algebras]{Lemma~\ref*{lemma: extensions of cellular algebras}} that $A_{n+1}$ is cellular.

 It then remains to show that for each cell module $\Delta$ of $A_{n+1}$,  the restriction of $\Delta$ to $A_n$  has an order preserving cell filtration, and that for each cell module $\Delta$ of $A_{n}$,   the induction of $\Delta$ to $A_{n+1}$  has an order preserving cell filtration.  In fact, we will go over the details of the proof of these last two steps below in \hyperref[theorem:  recursive determination of the branching factors]{Theorem~\ref*{theorem:  recursive determination of the branching factors}}.  For now, we note that in the proof of the statement about induced modules, it was falsely claimed in ~\cite{MR2794027},  in the last paragraph on page 335,   that if $\Delta$ is a cell module of $J_n$   then  $\Delta J_n = \Delta$.   In fact, this does not follow from the axioms~\eqref{axiom: involution on An}--\eqref{axiom Hn coherent}   listed above, so it is necessary to add an additional axiom to our framework, as follows:
 
 \medskip
 \begin{enumerate}
\setcounter{enumi}{\thesaveenumi}
\item  \label{axiom:  Delta J}
For $n \ge 2$,   $e_{n-1} A_n  e_{n-1}  A_n  =   e_{n-1}  A_n$.
\setcounter{saveenumi}{\theenumi}
\end{enumerate}
\medskip

From this, it follows that for a cell module $\Delta =  \Delta' \otimes_{A_{n-2}} e_{n-1}  A_n$  of $J_n$, we have $\Delta J_n = \Delta$, and the proof in   ~\cite{MR2794027}  can proceed as before. 

Let us now consider the applicability of the augmented framework axioms~\eqref{axiom: involution on An}--\eqref{axiom:  Delta J}  to the principal examples considered in ~\cite{MR2794027, MR2774622}.   In fact, in each example,  a stronger version of axiom
\eqref{axiom: e(n-1) in An en An} holds, namely
$$
e_{n-1} e_n e_{n-1} = e_{n-1}  \quad\text{and}\quad  e_n e_{n-1} e_n = e_n \quad\text{for}\quad  n \ge 2.
$$
Thus for $n \ge 3$, 
$$
e_{n-1} A_n e_{n-1}  A_n  \supseteq e_{n-1}  e_{n-2}  e_{n_1}  A_n  = e_{n-1} A_n.
$$
Therefore,  Axiom~\eqref{axiom:  Delta J} boils down to the statement
$$
e_1 A_2 e_1 A_2 = e_1 A_2.
$$
When $A_n$ is the $n$--th  BMW, Brauer, partition, or Jones--Temperley--Lieb algebra defined over an integral ground ring $R$,    we have  $A_1 = H_1 = R$.  Let $\delta$ be the non--zero element of $R$ such that $e_1^2 = \delta e_1$.   Then we have

$$
e_1 A_2  e_1 A_2 = e_1 A_1 e_1 A_2 =  e_1^2 A_2 = \delta e_1 A_2,
$$
where we have used $e_1 A_2 = e_1 A_1 = R e_1$.    In each of these examples,  $e_1 A_2$ is  free as an $R$--module,  and hence 
 Axiom~\eqref{axiom:  Delta J}  holds holds if and only if   $\delta$ is  invertible in $R$.     It follows that Axiom~\eqref{axiom:  Delta J} does not hold when $R$ is the generic ground ring,  but it does hold when $R$ is the generic ground ring with $\delta\inv$ adjoined. 
   
In fact,  for these algebras, it is false that $(A_n)_{n \ge 0}$  is a coherent tower of cellular algebras, over the generic ground ring,  but, by ~\cite{MR2794027}, as corrected above,  it is true over the generic ground ring with $\delta\inv$ adjoined.  This is illustrated by the example of the Jones--Temperley--Lieb algebras in the following section.  

\subsection{An example:  the Jones--Temperley--Lieb algebras}   We first state an elementary result about the commutativity  of specialization and induction.

Let $A$ be an algebra over an integral domain $R$ and let $\varphi : R \to k$ be  a ring homomorphism from $R$ to a field $k$.   Write $A^k$  for $A \otimes_R k$,  and for a right $A$--module $M$,  write $M^k$   for the right $A^k$--module $M \otimes_R k$.

\begin{lemma} \label{lemma:  commutativity of induction and specialization}
Let $A \subseteq B$ be algebras over an integral domain $R$, let $\varphi : R \to k$ be a ring homomorphism from $R$ to a field $k$, and let $M$ be  a right $A$--module. Then
$$
\Ind_A^B(M) \otimes_R k \cong \Ind_{A^k}^{B^k}(M^k),
$$   
as right $B^k$--modules.
\end{lemma}

\begin{corollary} \label{corollary:  commutativity of induction and specialization}
 If, in the situation of the lemma,  $\Ind_A^B(M)$ is free as an $R$--module,  then 
$\dim_k( \Ind_{A^k}^{B^k}(M^k))$  is independent of the choice of $k$ and of the homomorphism $\varphi : R \to k$.   
\end{corollary}

Now we consider the Jones--Temperley--Lieb algebras $A_n = A_n(R_0; \deltabold)$  defined over the generic ground ring $R_0 = \Z[\deltabold]$,  where $\deltabold$ is an indeterminant.   For the definition of these algebras and a description of their cellular structure, see \hyperref[subsection: temperley lieb algebras]{Section~\ref*{subsection: temperley lieb algebras}} of this paper, and further references there.

The algebra $A_2$ has two cell modules, each of rank 1.  They are  $\Delta_0 = e_1 A_2 = R e_1$  and
$\Delta_1 = A_2/R e_1 $.    When $k = \Q(\deltabold)$,   $\Ind_{A_2^k}^{A_3^k}(\Delta_0^k)$ is two dimensional and  $\Ind_{A_2^k}^{A_3^k}(\Delta_1^k)$ is three dimensional, as one sees by examining the generic branching diagram for the tower $(A_n^k)_{n \ge 0}$.  However, when $k = \Q$ and $\delta = 0$,   $\Delta_0^k  \cong  \Delta_1^k$, so also $\Ind_{A_2^k}^{A_3^k}(\Delta_0^k) \cong \Ind_{A_2^k}^{A_3^k}(\Delta_1^k)$.  It follows from this and \hyperref[corollary:  commutativity of induction and specialization]{Corollary~\ref*{corollary:  commutativity of induction and specialization}} that at least one of $\Ind(\Delta_0)$ or $\Ind(\Delta_1)$  fails to be free as an $R$--module,  and in particular one of these induced modules does not have a cell filtration.  

\begin{corollary}  The tower of Jones--Temperley--Lieb algebras $(A_n(R_); \deltabold))_{n \ge 0}$  over the generic ground ring $R_0 = \Z[\deltabold]$  is not a coherent tower of cellular algebras. 
\end{corollary}

\subsection{Standing assumptions} \label{subsection:  standing assumptions}
For the remainder of the paper we will work in the setting described by axioms~\eqref{axiom: involution on An}--\eqref{axiom:  Delta J}  of \hyperref[subsection:  cellularity and jones setting and correction]{Section~\ref*{subsection:  cellularity and jones setting and correction}}, and assume in addition that

\medskip
\begin{enumerate}
\setcounter{enumi}{\thesaveenumi}
\item \label{axiom: Hn cyclic cellular}  Each $H_n$ is a cyclic cellular algebra. 
\setcounter{saveenumi}{\theenumi}
\end{enumerate}

\medskip

\subsection{Cellularity of the algebras $A_n$}
\label{subsection results on cellularity from Goodman Graber}

Next we review some of the consequence of our axioms that were obtained in ~\cite{MR2794027, MR2774622}, as corrected above in \hyperref[subsection:  cellularity and jones setting and correction]{Section~\ref{subsection:  cellularity and jones setting and correction}}.     In the following let $({H}_i,*,\widehat{{H}}_i,\unrhd,\mathscr{H}_i)$ denote the cell datum for $H_n$.  

\begin{enumerate}
\item   Each $A_n$ is a cellular algebra.  In fact, this is demonstrated by showing that $J_n = A_n e_{n-1} A_n$ is a cellular ideal of $A_n$.  Since the quotient algebra $H_n = A_n/J_n$ is assumed to be cellular, it follows from \hyperref[lemma: extensions of cellular algebras]{Lemma~\ref*{lemma: extensions of cellular algebras}} that $A_n$ is cellular.   
\item The partially ordered set $\hat A_n$ in the cell datum for $A_n$ can be realized as
$$
\hat A_n = \leftbrace (\la, \el) \ \big \vert \   0 \le \el \le \lfloor n/2 \rfloor \text{ and }  \la \in \widehat H_{n - 2 \el} \rightbrace,
$$
with the partial order $(\la, \el)  \rhd  (\mu, m)$ if $\el > m$ or if $\el = m$ and $\la \rhd \mu$ in $\widehat{H}_{n-2 \el}$.
\item The cell modules $\cell A n {(\la, 0)}$ for $\la \in \widehat H_n$   are those such that $\cell A n {(\la,  0)} J_n = 0$.    Let $\pi_n : A_n \to A_n/J_n = H_n$ denote the quotient map.  
The cell module $\cell A n {(\la, 0)}$ can be identified with $\cell H n \la$ via
$x a = x \pi_n(a)$ for $x \in \cell H n \la$ and $a \in A_n$, as in \hyperref[remark on extensions of cellular algebras]{Remark~\ref*{remark on extensions of cellular algebras}}.  
The cell modules $\cell A n {(\la,  \el)}$ for $\el >0$ are the cell modules of the cellular ideal $J_n$.
For $\el > 0$, we have
$$
\cell A n {(\la, \el)} \cong \cell A {n -2} {(\la, \el -1)} \otimes_{A_{n-2}}  e_{n-1} A_{n}
= \cell A {n -2} {(\la, \el -1)} \otimes_{A_{n-2}}  e_{n-1} A_{n-1}.
$$

\item  The sequence $(A_n)_{n\ge 0}$ is a strongly coherent tower of cellular algebras.  Since
$A_n^F$ and $H_n^F$ are split semisimple for all $n$,  the two towers have branching diagrams, by \hyperref[corollary:  multiplicities in cell filtrations 3]{Corollary~\ref*{corollary:  multiplicities in cell filtrations 3}}.
\item The branching diagram $\hat A$ for the tower $(A_n)_{n\ge 0}$ is that ``obtained by reflections'' from the branching diagram $\widehat H$ of the tower $(H_n)_{n\ge 0}$.   That is,  for 
$(\la, \el) \in \hat A_n$ and $(\mu, m) \in \hat A_{n+1}$, we have $(\la, \el) \to (\mu, m)$ only if $m \in \{\el, \el +1\}$;  moreover,  $(\la, \el) \to (\mu, \el)$ if and only if $\la \to \mu$ in $\widehat H$, and $(\la, \el) \to (\mu, \el + 1)$ if and only if $ \mu \to \la $ in $\widehat H$.
\end{enumerate}

\begin{remark} The parameterization of $\hat A_n$ given here differs from that used in ~\cite{MR2794027, MR2774622}.  
\end{remark}

Taking Axiom~\eqref{axiom: Hn cyclic cellular} into account, we obtain:

\begin{theorem} 
\label{theorem: jones tower coherent tower cyclic cellular}
The tower $(A_n)_{n \ge 0}$ is a strongly coherent tower of cyclic cellular algebras.
\end{theorem}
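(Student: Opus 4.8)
The plan is to establish the three assertions of the theorem separately: that $(A_n)_{n\ge 0}$ is a tower of cyclic cellular algebras, that it is coherent, and that the coherence is strong. The cellularity of each $A_n$ and the strong coherence of the tower are already recorded in items (1)--(5) of Section \ref{subsection results on cellularity from Goodman Graber}, so the only genuinely new point is that each $A_n$ is \emph{cyclic} cellular. I would prove this by induction on $n$, using the extension result for cyclic cellular algebras (Remark \ref{remark: extensions of cyclic cellular algebras}) together with the explicit description of the cell modules of $A_n$ in item (3).

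First I would set up the induction: $A_0 = R$ and $A_1 = H_1$ are cyclic cellular by axiom (\ref{axiom: Hn cyclic cellular}) and axiom (\ref{axiom: A0 and A1}). Fix $n \ge 2$ and assume $A_k$ is cyclic cellular for all $k < n$. By axiom (\ref{axiom:  idempotent and Hn as quotient of An}), $J_n = A_n e_{n-1} A_n$ is a $*$--invariant ideal with $A_n/J_n \cong H_n$, and by item (1) of Section \ref{subsection results on cellularity from Goodman Graber}, $J_n$ is a cellular ideal in $A_n$. Since $H_n$ is cyclic cellular by axiom (\ref{axiom: Hn cyclic cellular}), Remark \ref{remark: extensions of cyclic cellular algebras} reduces the problem to showing that the cellular ideal $J_n$ is cyclic cellular, i.e.\ that every cell module of $J_n$ is a cyclic $A_n$--module (note that cell modules of $J_n$ are automatically $A_n$--modules by condition \eqref{equation: condition for cellular ideal}). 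The cell modules of $J_n$ are exactly the $\cell A n {(\la, \el)}$ with $\el > 0$, and by item (3) we have the identification
$$
\cell A n {(\la, \el)} \cong \cell A {n -2} {(\la, \el -1)} \otimes_{A_{n-2}}  e_{n-1} A_{n-1}.
$$
By the induction hypothesis, $\cell A {n-2} {(\la, \el-1)}$ is a cyclic $A_{n-2}$--module; let $\generator {} {} $ be a generator. Then $\generator{}{} \otimes e_{n-1}$ generates $\cell A n {(\la,\el)}$ as an $A_n$--module, since $\generator{}{}\otimes e_{n-1}A_{n-1} = \generator{}{}\otimes e_{n-1}A_n e_{n-1}\cdot(\text{something})$; more carefully, $e_{n-1}A_{n-1} = e_{n-1}A_n$ by axiom (\ref{axiom:  An en}) applied with index $n-1$, so $(\generator{}{}\otimes e_{n-1})A_n$ contains $\generator{}{}\otimes e_{n-1}A_n = \generator{}{}\otimes e_{n-1}A_{n-1}$, and since $\generator{}{}$ generates the left factor over $A_{n-2}$ which acts on the left of the tensor, one obtains all of $\cell A {n-2}{(\la,\el-1)}\otimes_{A_{n-2}} e_{n-1}A_{n-1}$.

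Having shown each $A_n$ is cyclic cellular, the theorem follows by combining this with item (4) of Section \ref{subsection results on cellularity from Goodman Graber}, which already gives that $(A_n)_{n\ge 0}$ is a strongly coherent tower of cellular algebras. I expect the main obstacle to be the careful bookkeeping in the cyclicity argument for $J_n$: one must check that the tensor-product identification of $\cell A n {(\la,\el)}$ is compatible with the $A_n$--action in the way Remark \ref{remark: extensions of cyclic cellular algebras} requires, and that the generator $\generator{}{}\otimes e_{n-1}$ really does generate over $A_n$ and not merely over some subalgebra — this is where axioms (\ref{axiom: en An en}), (\ref{axiom:  An en}), and (\ref{axiom: e(n-1) in An en An}) must be invoked to control how $e_{n-1}$ interacts with the tower. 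Everything else is either a direct appeal to the earlier results or a routine unwinding of definitions.
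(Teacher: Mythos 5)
Your argument is correct and follows essentially the same route as the paper: cite \cite{GG:2011b} for strong coherence as a cellular tower, then prove cyclic cellularity by induction on $n$, realizing $\cell A n {(\la,\el)}$ for $\el>0$ as $\cell A {n-2}{(\la,\el-1)}\otimes_{A_{n-2}} e_{n-1}A_n$ and taking $\generator{A_{n-2}}{(\la,\el-1)}\otimes e_{n-1}$ as the cyclic generator. Your invocation of Remark~\ref{remark: extensions of cyclic cellular algebras} simply makes explicit the case split (cell modules of $H_n$ versus cell modules of $J_n$) that the paper handles in the same way without naming the remark, and the generation step rests precisely on axioms (\ref{axiom: en An en}) and (\ref{axiom:  An en}) as you anticipate (axiom (\ref{axiom: e(n-1) in An en An}) is not in fact needed for that step).
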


\begin{proof}   From ~\cite{MR2794027, MR2774622},  with the correction noted in \hyperref[subsection:  cellularity and jones setting and correction]{Section~\ref*{subsection:  cellularity and jones setting and correction}}, we have that 
  the tower is a strongly coherent tower of cellular algebras.  It remains to show that each $A_n$ is cyclic cellular.
We prove this by induction on $n$.  The statement is known for $n = 0$ and $n = 1$, since
$A_0 = R$  and $A_1 = H_1$.   Fix $n \ge 0$ and assume the algebras $A_k$ for $k \le n$ are cyclic cellular.   The cell modules $\cell A {n+1} {(\la, 0)}$ are cell modules of $H_{n+1}$, so cyclic by axiom (9).
For $\el >  0$, we can take
$$
\cell A {n+1} {(\la, \el)} =  \cell A {n-1} {(\la, \el-1)} \otimes_{A_{n-1}} e_n A_{n+1},
$$
By the induction hypothesis, $\cell A {n-1} {(\la, \el-1)}$ is cyclic, say with generator 
$\generator {A_{n-1}} {(\la, \el-1)}$.  
It follows that $\cell A {n+1} {(\la, \el)}$
 is cyclic with generator
$\generator {A_{n+1}}  {(\la, \el)} = \generator {A_{n-1}}  {(\la, \el -1)} \otimes_{A_{n-1}} e_n$.
\end{proof}

\subsection{Data associated with the cell modules $\cell A n {(\la, \el)}$}
We suppose that generators $\generator {H_n} \la$ of $\cell H n \la$ have been chosen for all $n \ge 0$ and for all $\la \in \widehat H_n$.  We suppose also that $H_n$--$H_n$ bimodule isomorphisms 
$\alpha_\la : H_n^{\unrhd \la}/H_n^{\rhd \la} \to (\cell H n \la)^* \otimes_R \cell H n \la$ have been chosen, satisfying $* \circ \alpha_\la = \alpha_\la \circ *$.  Finally, we suppose that elements
$c_\la \in H_n^{\unrhd \la}$ have been chosen with $\alpha_\la(c_\la + H_n^{\rhd \la}) = (\generator {H_n} \la)^* \otimes \generator {H_n} \la$.  

Now we want to do the following:
\begin{enumerate}
\item establish models of  cell modules $\cell A n {(\la, \el)}$  of $A_n$ for all $n$ and all 
$(\la, \el)  \in \hat A_n$;  
\item select generators $\generator {A_n} {(\la, \el)}$ for each cell module;  
\item choose $A_n$--$A_n$  bimodule isomorphisms 
$$
\alpha_{(\la, \el)} : A^{\unrhd (\la, \el)}/A^{\rhd (\la, \el)}
\to (\cell A n {(\la, \el)})^* \otimes_R \cell A n {(\la, \el)}
$$  
satisfying $*\circ \alpha_{(\la, \el)} = \alpha_{(\la, \el)}\circ *$;  
\item and finally choose elements 
$c_{(\la, \el)} \in A^{\unrhd (\la, \el)}$ such that
$$\alpha_{(\la, \el)}(c_{(\la, \el)} +  A^{\rhd (\la, \el)} ) = (\generator {A_n} {(\la, \el)})^* \otimes  \generator {A_n} {(\la, \el)}.$$
\end{enumerate}

When $\el = 0$,  we identify $\cell A n {(\la, 0)}$ with $\cell H n \la$, and we proceed according to the prescription of \hyperref[remark on extensions of cellular algebras]{Remark~\ref*{remark on extensions of cellular algebras}} and \hyperref[remark: extensions of cyclic cellular algebras]{Remark~\ref*{remark: extensions of cyclic cellular algebras}}. Namely, 
$\generator {A_n} {(\la, 0)} =  \generator {H_n} \la$;   
$\alpha_{(\la, 0)} :  a +  A_n^{\rhd (\la, 0)} \mapsto \alpha_\la(\pi_n(a) + H_n^{\rhd \la})$;
and $c_{(\la, 0)}$ is any element of $\pi_n\inv(c_\la)$.  

We continue by induction on $n$.  For $n \le 1$ there is nothing to do, since $A_0 = R$ and
$A_1 = H_1$.   Fix $n \ge 2$ and suppose that all the desired data has been chosen for all 
$k \le n$ and all $(\mu, m) \in \hat A_k$.   We have to consider $(\la, \el) \in \hat A_{n+1}$ with
$\el > 0$.   As a model of the cell module $\cell A {n+1} {(\la, \el)}$ we can take
$\cell A {n-1} {(\la, \el -1)} \otimes_{A_{n-1}} e_n A_{n+1}$, and for the generator of the cell module we can take $\generator {A_{n+1}}  {(\la, \el)} = \generator {A_{n-1}}  {(\la, \el -1)} \otimes_{A_{n-1}} e_n$.

Next we  define $\alpha_{(\la, \el)}$.  According to ~\cite[Sect.~4]{MR2794027},
\begin{equation*}
\begin{aligned}
A_{n+1}^{\unrhd (\la, \el)} &= A_{n+1} A_{n-1}^{\unrhd (\la, \el-1)} e_n A_{n+1} \\
&\cong A_{n+1} e_n \otimes_{A_{n-1}} A_{n-1}^{\unrhd (\la, \el-1)} \otimes_{A_{n-1}} e_n A_{n+1},
\end{aligned}
\end{equation*}
as $A_{n+1}$--$A_{n+1}$ bimodules,   with the isomorphism determined by $a_1 x e_n a_2 \mapsto a_1 e_n \otimes x \otimes e_n a_2$.  
Similarly 
\begin{equation*}
\begin{aligned}
A_{n+1}^{\rhd (\la, \el)} &= A_{n+1} A_{n-1}^{\rhd (\la, \el-1)} e_n A_{n+1} \\
&\cong A_{n+1} e_n \otimes_{A_{n-1}} A_{n-1}^{\rhd (\la, \el-1)} \otimes_{A_{n-1}} e_n A_{n+1}.
\end{aligned}
\end{equation*}
Moreover, we have an isomorphism 
\begin{equation*}
\begin{aligned}
\varphi: A_{n+1}^{\unrhd (\la, \el)}/A_{n+1}^{\rhd (\la, \el)} 
\to  A_{n+1} e_n \otimes_{A_{n-1}} (A_{n-1}^{\unrhd (\la, \el-1)}/A_{n-1}^{\rhd (\la, \el-1)}  )\otimes_{A_{n-1}} e_n A_{n+1},
\end{aligned}
\end{equation*}
determined  by 
\begin{equation*}
\varphi(a_1 x e_n a_2 + A_{n+1}^{\rhd (\la, \el)}) = a_1 e_n \otimes (x +  A_{n- 1}^{\rhd (\la, \el-1)} )\otimes e_n a_2.
\end{equation*}
We identify $(e_n A_{n+1})^*$ with $A_{n+1} e_n$   (as $A_{n+1}$--$A_{n-1}$ bimodules).  
Thus
\begin{equation*}
(\cell A {n+1} {(\la, \el)})^* = (\cell A {n-1} {(\la, \el -1)} \otimes_{A_{n-1}} e_n A_{n+1})^*
= A_{n+1} e_n \otimes_{A_{n-1}}  (\cell A {n-1} {(\la, \el -1)})^*.
\end{equation*}
We define
\begin{equation*}
\alpha_{(\la, \el)} = (\id_{A_{n+1} e_n} \otimes \,\alpha_{(\la, \el-1)} \otimes \,\id_{e_n A_{n+1} }) \circ \varphi.
\end{equation*}
Thus
\begin{equation*}
\begin{aligned}
\alpha_{(\la, \el)} : A_{n+1}^{\unrhd (\la, \el)}/A_{n+1}^{\rhd (\la, \el)}  &\to
A_{n+1} e_n \otimes_{A_{n-1}}  (\cell A {n-1} {(\la, \el -1)})^* \otimes_R
\cell A {n-1} {(\la, \el -1)} \otimes_{A_{n-1}} e_n A_{n+1} \\
&= (\cell A {n+1} {(\la, \el)})^*  \otimes_R  \cell A {n+1} {(\la, \el)}.
\end{aligned}
\end{equation*}
Now one can check that $* \circ \alpha_{(\la, \el)} = \alpha_{(\la, \el)} \circ *$.  

Note that $c_{(\la, \el-1)} e_n \in  A_{n-1}^{\unrhd (\la, \el-1)} e_n  \subseteq 
A_{n+1}^{\unrhd (\la, \el)}$ and 
$$
\alpha_{(\la, \el)}(c_{(\la, \el-1)} e_n + A_{n+1}^{\rhd (\la, \el)}) =   (e_n \otimes (\generator {A_{n-1}} {(\la, \el-1)})^* )\otimes (\generator {A_{n-1}} {(\la, \el-1)} \otimes e_n) = (\generator {A_{n+1}}  {(\la, \el)})^* \otimes \generator {A_{n+1}}  {(\la, \el)}, 
$$
so we can take $c_{(\la, \el)} = c_{(\la, \el-1)} e_n$.  

Let us restate this last observation, replacing $n+1$ by $n$.  We have shown that if
$(\la, \el) \in \hat A_n$ and $\el > 0$,  then (we can take) 
\begin{equation}
c_{(\la, \el)} = c_{(\la, \el-1)} e_{n-1}
\end{equation}
 By induction, we have
\begin{equation}
c_{(\la, \el)} = c_{(\la, 0)} e_{n-2\el+1}e_{n-2\el+3}\cdots e_{n-1}.
\end{equation}
Expressions of this form will appear again, so we establish the notation

\begin{equation} \label{notation e sub i super ell}
e_{n-1}^{(\el)} =  
\begin{cases}
1  & \text{ if }   \el = 0 \\
\underbrace{e_{n-2\el+1}e_{n-2\el+3}\cdots e_{n-1}}_{\text{$\el$ factors}}
&\text{ if }  \el=1,\ldots,\lfloor n/2\rfloor,  \text{ and }\\
0 &\text{ if }  \el>\lfloor n/2\rfloor.
\end{cases}
\end{equation}

With this notation, we have
\begin{equation}  \label{equation:  formula for c lambda ell}
c_{(\la, \el)} = c_{(\la, 0)} e_{n-1} \power \el.
\end{equation}

\subsection{Branching factors}  We continue to work with a pair of towers of algebras~\eqref{p-r}  satisfying the standing assumptions of \hyperref[subsection:  standing assumptions]{Section~\ref*{subsection:  standing assumptions}}.

We know already that both of the towers $(H_n)_{n \ge 0}$  and $(A_n)_{n \ge 0}$  are strongly coherent towers of cyclic cellular algebras, with $H_n^F$ and $A_n^F$ split semisimple for all $n$.  Therefore, the analysis of \hyperref[subsection: bases of cell modules for coherent cyclic towers]{Section~\ref{subsection: bases of cell modules for coherent cyclic towers}}, concerning branching factors and path bases,  is applicable to both towers.  We will show that the branching factors and path bases for the tower $(A_n)_{n \ge 0}$  can be computed by explicit formulas from those for the tower 
$(H_n)_{n \ge 0}$. 

Suppose that we have chosen  an order preserving cell filtration  of 
 $\Res_{H_n}^{H_{n+1}}(\cell H {n+1} \mu)$ and of  \break
 $\Ind_{H_n}^{H_{n+1}}(\cell H n \la)$  for all $n$ and for all
$\la \in \widehat H_n$ and $\mu \in \widehat H_{n+1}$,  
\begin{align} \label{equation: ordered filtration of restricted module 00}
\{0\}=M_0\stackrel{\lambda^{(1)}}{\subseteq} M_1\stackrel{\lambda^{(2)}}{\subseteq}\cdots\stackrel{\lambda^{(r)}}{\subseteq} M_r= \Res_{H_n}^{H_{n+1}}(\cell H {n+1} \mu),
\end{align} 
and
\begin{align} \label{equation: ordered filtration of induced module}
\{0\}=N_0\stackrel{\mu^{(1)}}{\subseteq} N_1\stackrel{\mu^{(2)}}{\subseteq}\cdots\stackrel{\mu^{(p)}}{\subseteq} N_p= \Ind_{H_n}^{H_{n+1}}(\cell H n \la).
\end{align}
 Fix  a generator $\generator  {H_k} \nu$  of $\cell H k \nu$  for each $k$ and each $\nu \in \widehat H_k$.
 Moreover, suppose that we have chosen, once and for all,  branching factors    and $\dd \la \mu {n+1}$ 
and $\uu \la \mu {n+1}$   in $H_{n+1}$, so that 
$\generator {H_{n+1}} \mu  \dd {\la \power j} \mu {n+1} + M_{j-1}$ is a generator of  $M_j/M_{j-1} \cong 
\cell H n {\la \power j}$, and 
$\generator {H_{n}} \la  \otimes  \uu \la {\mu\power i} {n+1} +  N_{i-1}$ is a generator of 
$N_i/N_{i-1} \cong \cell H {n+1} {\mu\power i}$, following observations
\eqref{observation 1 regarding branching factors}   and \eqref{observation 2 regarding branching factors}, in 
\hyperref[subsection: Cyclic cellularity and branching factors]{Section~\ref*{subsection: Cyclic cellularity and branching factors}}.
For each $n$, $\la$ and $\mu$,  choose $\uubar \la \mu {n+1}  \in  \pi_{n+1}\inv(\uu \la \mu {n+1})$   and 
$\ddbar \la \mu {n+1}  \in  \pi_{n+1}\inv(\dd \la \mu {n+1})$ arbitrarily.  

We know that for each $n$ and for each $(\la, \el) \in \hat A_n$,  there exists an order preserving cell filtration of $\Ind_{A_n}^{A_{n+1}}(\cell A n  {(\la, \el)})$
\begin{align} \label{equation: ordered filtration of induced module of A n}
\{0\}=N_0\stackrel{(\mu^{(1)}, \,m_1)}{\subseteq} N_1\stackrel{(\mu^{(2)}, \,m_2)}{\subseteq}\cdots\stackrel{(\mu^{(s)}, \,m_s)}{\subseteq} N_s= \Ind_{A_n}^{A_{n+1}}(\cell A n  {(\la, \el)}),
\end{align}
and there exist branching factors $\uu {(\la, \el)} {(\mu, m)} {n+1} \in A_{n+1}$ such that
\begin{equation} \label{equation: characterization of elements uu for A n}
\generator {A_n} {(\la, \el)} \otimes_{A_n} \uu {(\la, \el)} {(\mu \power j, m_j)} {n+1}  + N_{j-1} \mapsto \generator {A_{n+1}} {(\mu \power j, m_j)}
\end{equation}
under the isomorphism $N_j/N_{j-1} \cong \cell A {n+1} {(\mu \power j, m_j)}$.

Likewise, we  know that for each $n$ and for each $(\mu, m) \in \hat A_{n+1}$,  there exists an order preserving cell filtration of $\Res_{A_n}^{A_{n+1}}(\cell A {n+1}  {(\mu, m)})$
\begin{align} \label{equation: ordered filtration of restricted module of A n plus 1}
\{0\}=M_0\stackrel{(\la^{(1)}, \,\el_1)}{\subseteq} M_1\stackrel{(\la^{(2)}, \,\el_2)}{\subseteq}\cdots\stackrel{(\la^{(t)}, \,\el_t)}{\subseteq} M_t= \Res_{A_n}^{A_{n+1}}(\cell A {n+1}  {(\mu, m)}),
\end{align}
and there exist branching factors $\dd {(\la, \el)} {(\mu, m)} {n+1} \in A_{n+1}$ such that
\begin{equation} \label{equation: characterization of elements dd for A n}
\generator {A_{n+1}} {(\mu, m)} \dd {(\la \power j, \el_j)} {(\mu, m)} {n+1} + M_{j-1} \mapsto
\generator {A_n} {(\la \power j, \el_j)} 
\end{equation}
under the isomorphism $M_j/M_{j-1} \cong \cell A {n} {(\la \power j, \el_j)}$.

Neither the cell filtrations~\eqref{equation: ordered filtration of induced module of A n}   and~\eqref{equation: ordered filtration of restricted module of A n plus 1} nor the branching factors in~\eqref{equation: characterization of elements uu for A n} and
\eqref{equation: characterization of elements dd for A n} are canonical.  However, it was shown in 
~\cite{MR2794027} that  cell filtrations of the induced and restricted modules for the tower $(A_n)$ can be obtained recursively, based on the cell filtrations of induced and restricted modules for the tower $(H_n)$.  We will show here that the  branching factors for the tower $(A_n)$  can also be chosen to satisfy recursive relations, so that they are determined completely by the liftings
$\uubar \la \mu {n+1}$ and $\ddbar \la \mu {n+1}$ of the branching factors for the tower $(H_n)$. 

Each of the statements in the following theorem should be interpreted as applying whenever they make sense.  For example, in statement (2),  the branching factor $\dd {(\la, \el)} {(\mu, m+1)} {n+1}$ makes sense when $n \ge 1$,  $(\la, \el) \in \hat A_n$,  $(\mu, m+1) \in \hat A_{n+1}$, and
$(\la, \el) \to (\mu, m+1)$ in the branching diagram $\hat A$.  This implies that $(\mu, m) \in \hat A_{n-1}$ and $(\mu, m) \to (\la, \el)$ in $\hat A$, so that the branching factor   $\uu {(\mu, m)} {(\la, \el)} {n}$ also makes sense.  

\begin{theorem} \label{theorem:  recursive determination of the branching factors}
The branching factors for the tower $(A_n)_{n\ge 0}$ can be chosen to satisfy:
\begin{enumerate}
\item  $\dd {(\la, 0)}{(\mu,0)} {n+1} = \ddbar {\la} {\mu} {n+1}$. 
\item   $\dd {(\la, \el)} {(\mu, m+1)} {n+1} =   \uu {(\mu, m)} {(\la, \el)} {n}$.
\item   $\uu {(\la, 0)}{(\mu,0)} {n+1} = \uubar {\la} {\mu} {n+1}$. 
\item     $\uu {(\la, \el)} {(\mu, m+1)} {n+1} =   \dd {(\mu, m)} {(\la, \el)} {n} \, e_n$.
\end{enumerate}
\end{theorem}

\begin{proof}
To prove this result, we have to look into, and add some detail to, the proof in ~\cite{MR2794027, MR2774622}  that the tower $(A_n)$ is strongly coherent.

First we consider branching factors for reduced modules.   The argument is an elaboration of the proof of~\cite[Proposition~4.10]{MR2794027}.  
Let $n \ge 0$.  Consider a cell module $\cell A {n+1}  {(\mu, 0)}$  of $A_{n+1}$.
 We identify $\cell A {n+1}  {(\mu, 0)}$ with the cell module $\cell H {n+1} \mu$ of  $H_{n+1}$,  and we identify the chosen generators of these modules,   $\generator {A_{n+1}}   {(\mu, 0)}$ with
  $\generator {H_{n+1}}   {\mu}$.
 It follows from Axiom~\eqref{axiom: e(n-1) in An en An} that $J_n \subseteq J_{n+1}$ and hence $\Res_{A_n}^{A_{n+1}}(\cell A {n+1}  {(\mu, 0)}) J_n = 0$.    Therefore,
$\Res_{A_n}^{A_{n+1}}(\cell A {n+1}  {(\mu, 0)})$ is an $H_n$ module and can be identified with 
$\Res_{H_n}^{H_{n+1}}(\cell H {n+1} \mu)$.   Consider the chosen cell filtration of  $\Res_{H_n}^{H_{n+1}}(\cell H {n+1} \mu)$
$$
\{0\} \subseteq M_1 \subseteq M_1 \subseteq \cdots \subseteq M_r =  \Res_{H_n}^{H_{n+1}}(\cell H {n+1} \mu),
$$
with $M_j/M_{j-1} \cong \cell H n {\la \power j}$ for each $j$.   The isomorphism
$M_j/M_{j-1}  \to \cell H n {\la \power j}$ maps $\generator {H_{n+1}} \mu \dd {\la \power j} \mu {n+1} + M_{j-1}$ to 
$\generator {H_n}  {\la \power j}$.  But we identify $\cell H n {\la \power j}$ with
$\cell A n {({\la \power j}, 0)}$ and  $\generator {H_n}  {\la \power j}$ with $\generator {A_n}  {(\la \power j, 0)}$, 
so the isomorphism  sends  $\generator {A_{n+1}}   {(\mu, 0)}  \ddbar {\la \power j} {\mu} {n+1} + M_{j-1} = \generator {H_{n+1}}   {\mu} \dd {\la \power j} {\mu} {n+1} + M_{j-1} $ to 
$\generator {A_n}  {(\la \power j, 0)}$.  Thus we can choose
$\dd {(\la\power j, 0)}{(\mu,0)} {n+1} $ to be $\ddbar {\la\power j} {\mu} {n+1}$.  This proves point (1).  

Next, let $n \ge 1$ and consider a cell module 
$$
\Delta = \cell A {n+1} {(\mu, m+1)}  = 
\cell A {n-1} {(\mu, m)} \otimes_{A_{n-1}} e_n A_{n}
$$ 
of
$A_{n+1}$.   The restricted module $\Res_{A_n}^{A_{n+1}}(\Delta)$ is $ \cell A {n-1} {(\mu, m)} \otimes_{A_{n-1}} e_n A_{n}$ regarded as a right $A_n$ module.  But  $e_n A_n \cong A_n$ as an $A_{n-1}$--$A_n$  module,
so we have an isomorphism
$$
\varphi:  \cell A {n-1} {(\mu, m)} \otimes_{A_{n-1}} e_n A_{n} \to 
 \cell A {n-1} {(\mu, m)} \otimes_{A_{n-1}}  A_{n}   = \Ind_{A_{n-1}}^{A_{n}}( \cell A {n-1} {(\mu, m)} ),
$$
defined by $\varphi(x \otimes e_n a) = x \otimes a$.   We suppose we already have a chosen cell filtration of $\Ind_{A_{n-1}}^{A_{n}}( \cell A {n-1} {(\mu, m)} )$,
$$
\{0\} \subseteq N_1 \subseteq N_2 \subseteq \cdots \subseteq N_s =   \Ind_{A_{n-1}}^{A_{n}}( \cell A {n-1} {(\mu, m)} ),
$$
with isomorphisms $N_j/N_{j-1} \to \cell  A n {(\la \power j, \el_j)}$,  as well as  branching
factors  $\uu {(\mu, m)} {(\la \power j, \el_j)} n$ such that the isomorphism
$N_j/N_{j-1} \to \cell  A n {(\la \power j, \el_j)}$ takes
$\generator {A_{n-1}}  {(\mu, m)} \otimes_{A_{n-1}}  \uu {(\mu, m)} {(\la \power j, \el_j)} n + N_{j-1}$   to
$\generator {A_n} {(\la \power j, \el_j)}$.    Pulling all this data back via $\varphi$, we have a cell filtration of $\Res_{A_n}^{A_{n+1}}(\Delta)$, 
 $$
\{0\} \subseteq N'_1 \subseteq N'_2 \subseteq \cdots \subseteq N'_s =   \Res(\Delta),
$$
with isomorphisms 
$\varphi_j: N'_j/N'_{j-1} \to \cell  A n {(\la \power j, \el_j)}$  taking
$\generator {A_{n-1}}  {(\mu, m)} \otimes_{A_{n-1}}  e_n \uu {(\mu, m)} {(\la \power j, \el_j)} n + N'_{j-1}$  to
$\generator {A_n} {(\la \power j, \el_j)}$.  But
$\generator {A_{n-1}}  {(\mu, m)} \otimes_{A_{n-1}}  e_n$ is the generator
$\generator {A_{n+1}}   {(\mu, m+1)}$ of  $\Delta$.   Thus   
$$
\varphi_j : \generator {A_{n+1}}   {(\mu, m+1)}\uu {(\mu, m)} {(\la \power j, \el_j)} n + N'_{j-1}
\mapsto \generator {A_n} {(\la \power j, \el_j)}.
$$
 This means that we can choose $\uu {(\mu, m)} {(\la \power j, \el_j)} n$ for
 $\dd {(\la \power j, \el_j)}  {(\mu, m+1)} {n+1}$.  This proves point (2).

Next we turn to the branching factors for induced modules.  Statement (3) is evident when $n = 0$ since $A_0 = H_0 = R$ and $A_1 = H_1$.  Statement (4) only makes sense when $n \ge 1$,  so it remains to verify both statements (3) and (4) for $n \ge 1$.  
The argument is an elaboration of the proof of  Proposition 4.14 in ~\cite{MR2794027}.

Let $n \ge 1$ and let $\Delta$ be a cell module of $A_n$.
According to ~\cite[Proposition~4.14]{MR2794027}, $\Delta \otimes_{A_n}  J_{n+1}$ imbeds in $\Ind_{A_n}^{A_{n+1}}(\Delta)$  and the quotient $\Ind_{A_n}^{A_{n+1}}(\Delta)/(\Delta \otimes_{A_n}  J_{n+1})$  is isomorphic to  $\Delta \otimes_{A_n}  H_{n+1}$.   Moreover,  both of the $A_{n+1}$--modules
$\Delta \otimes_{A_n}  J_{n+1}$ and $\Delta \otimes_{A_n}  H_{n+1}$ have cell filtrations;  one obtains a cell filtration of $\Ind_{A_n}^{A_{n+1}}(\Delta)$ by gluing the cell filtrations of the submodule and the quotient module.  The cell modules of $A_{n+1}$  appearing as subquotients of the cell filtration of
$\Delta \otimes_{A_n}  J_{n+1}$  are of the form $\cell A {n+1}  {(\mu, m)}$ with $m > 0$;  that is, they are cell modules of the cellular ideal $J_{n+1}$.  The cell modules appearing as subquotients of the cell filtration of $\Delta \otimes_{A_n}  H_{n+1}$  are of the form $\cell A {n+1}  {(\mu, 0)}$; that is, they are cell modules of the quotient algebra $H_{n+1}$.   

Now consider in particular a cell module $\cell A n {(\la, 0)}$ of $A_n$ for $n \ge 1$.   According to the previous paragraph, to find the branching factors $\uu {(\la, 0)} {(\mu, 0)} {n+1}$ with $\mu \in \widehat H_{n+1}$, we have only to construct a particular cell filtration of $\cell A n {(\la, 0)} \otimes_{A_n} H_{n+1}$.  
We identify $\cell A n {(\la, 0)}$ with the cell module  $\cell H n \la$ of $H_n$, and 
$\cell A n {(\la, 0)} \otimes_{A_n} H_{n+1}$  with $\cell H n \la \otimes_{H_n} H_{n+1} = \Ind_{H_n}^{H_{n+1}}(\cell H n \la )$.   The remainder of the proof of statement (3) proceeds by considering the chosen
cell filtration of $ \Ind_{H_n}^{H_{n+1}}(\cell H n \la )$ and the associated branching factors
$\uu \la \mu {n+1}$;  the proof is similar to the proof of statement (1).  

Finally, let $n \ge 1$ and consider a cell module $\Delta = \cell A n {(\la, \el)}$ of $A_n$.  
Write $\Res(\Delta)$  for $\Res_{A_{n-1}}^{A_n}(\Delta)$.  
To find the branching factors  $\uu {(\la, \el)} {(\mu, m+1)} {n+1}$,  we have to construct a particular cell filtration of
$\Delta \otimes_{A_n}  J_{n+1}$.    By axiom (7) and \cite[Corollary~4.6]{MR2794027}, we have
$$
J_{n+1} = A_{n} e_{n} A_{n} \cong A_n e_n \otimes_{A_{n-1}}  e_n A_n,
$$
as   $A_n$--$A_{n+1}$  bimodules,  the isomorphism being given by $a_1 e_n a_2 \mapsto a_1 e_n \otimes_{A_{n-1}} e_n a_2$.   We have $A_n e_n \cong A_n$ as an $A_n$--$A_{n-1}$  bimodule, so
$$
\begin{aligned}
\Delta  \otimes_{A_n}  J_{n+1} &\cong \Delta  \otimes_{A_n}  A_n e_n \otimes_{A_{n-1}}  e_n A_n \\
&\cong \Delta  \otimes_{A_n}  A_n  \otimes_{A_{n-1}}  e_n A_n  \\
&\cong \Res(\Delta)  \otimes_{A_{n-1}}  e_n A_n.
\end{aligned}
$$
The composite isomorphism  $\varphi: \Delta  \otimes_{A_n}  J_{n+1}  \to  \Res(\Delta)  \otimes_{A_{n-1}}  e_n A_n $  is given by $\varphi(x \otimes_{A_n} a_1 e_n a_2) = x a_1 \otimes_{A_{n-1}} e_n a_2$.  
In particular,  $\varphi(x \otimes_{A_{n}} e_n )= x \otimes_{A_{n-1}} e_n$.   
We assume that we have a chosen cell filtration of $\Res(\Delta)$, 
$$
\{0\} \subseteq M_1 \subseteq M_2 \subseteq \cdots  \subseteq M_t = \Res(\Delta),
$$  
with isomorphisms $ M_j/M_{j-1} \to \cell A {n-1}  {(\mu \power j, m_j)}$ and we have chosen branching factors $\dd {(\mu \power j, m_j)} {(\la, \el)} {n}$ such that  the isomorphism 
$ M_j/M_{j-1} \to \cell A {n-1}  {(\mu \power j, m_j)}$ takes 
$\generator {A_n} {(\la, \el)}  \dd {(\mu \power j, m_j)} {(\la, \el)} {n} + M_{j-1}$ to
$\generator {A_{n-1}}  {(\mu \power j, m_j)}$.  
By~\cite[Lemma~4.12]{MR2794027},  $M_{j-1} \otimes_{A_{n-1}} e_n A_n$ imbeds in $M_{j} \otimes_{A_{n-1}} e_n A_n$ for each $j$, and the quotient is isomorphic to 
$$
M_j/M_{j-1} \otimes_{A_{n-1}} e_n A_n \cong \cell A {n-1}  {(\mu \power j, m_j)} \otimes_{A_{n-1}} e_n A_n   = \cell A {n+1}  {(\mu \power j, m_j+1)} .
$$
Writing $M_j' = M_j \otimes_{A_{n-1}} e_n A_n$,  we obtain a cell filtration of $\Res(\Delta) \otimes_{A_{n-1}} e_n A_n$,
$$
\{0\} \subseteq M'_1 \subseteq M'_2 \subseteq \cdots  \subseteq M'_t = \Res(\Delta) \otimes_{A_{n-1}} e_n A_n,
$$  
with isomorphisms $M_j'/M_{j-1}' \to \cell A {n+1}  {(\mu \power j, m_j+1)} $ taking
$\generator {A_n} {(\la, \el)}  \dd {(\mu \power j, m_j)} {(\la, \el)} {n} \otimes_{A_{n-1}} e_n + M_{j-1}'$ to
$\generator {A_{n-1}}  {(\mu \power j, m_j)} \otimes_{A_{n-1}} e_n = \generator {A_{n+1}}  {(\mu \power j, m_j+1)}$.   Pulling back this data via the isomorphism  $\varphi: \Delta  \otimes_{A_n}  J_{n+1}  \to  \Res(\Delta)  \otimes_{A_{n-1}}  e_n A_n $, we get a cell filtration of $ \Delta  \otimes_{A_n}  J_{n+1}$,
$$
\{0\} \subseteq M''_1 \subseteq M''_2 \subseteq \cdots  \subseteq M''_t =  \Delta  \otimes_{A_n}  J_{n+1},
$$  
with isomorphisms  $M_j''/M_{j-1}'' \to \cell A {n+1}  {(\mu \power j, m_j+1)} $ taking
$$\generator {A_n} {(\la, \el)}  \dd {(\mu \power j, m_j)} {(\la, \el)} {n} \otimes_{A_{n}} e_n + M_{j-1}''
=\generator {A_n} {(\la, \el)}   \otimes_{A_{n}} \dd {(\mu \power j, m_j)} {(\la, \el)} {n} e_n + M_{j-1}''
$$ 
to $\generator {A_{n+1}}  {(\mu \power j, m_j+1)}$.  We conclude that we can take
$$
\uu {(\la, \el)}  {(\mu\power j, m_j + 1)}  {n+1} =  \dd {(\mu \power j, m_j)} {(\la, \el)} {n} \, e_n,
$$
which proves point (4), and completes the proof of the theorem.
\end{proof}

 Next we apply the recursion of \hyperref[theorem:  recursive determination of the branching factors]{Theorem~\ref*{theorem:  recursive determination of the branching factors}} to obtained closed formulas for the branching factors for the tower $(A_n)_{n \ge 0}$.
Since the branching diagram $\hat A$ is obtained by reflections from the branching diagram $\widehat H$, it follows that $(\la, \el) \to (\mu, m)$  only if $m \in \{\el, \el + 1\}$;  in particular,  $(\la, \el) \to (\mu, 0)$  only if $\el = 0$.   Moreover, $(\la, \el) \to (\mu, \el)$ in $\hat A$ if and only if $\la \to \mu$ in $\widehat H$,  and 
$(\la, \el) \to (\mu, \el + 1)$ in $\hat A$ if and only if $\mu \to \el$ in $\widehat H$.

\begin{theorem} \label{theorem:  closed form determination of the branching factors}
The branching factors  for the tower $(A_n)_{n\ge 0}$ can be chosen to satisfy:
\begin{enumerate}
\item $\dd {(\la, \el)} {(\mu, \el)} {n+1} = \ddbar \la \mu {n + 1 - 2\el}  e_{n-1} \power {\el}$.
\item $\uu {(\la, \el)} {(\mu, \el)} {n+1} = \uubar \la \mu {n + 1 - 2\el}  e_{n} \power {\el}$.
\item $\dd {(\la, \el)} {(\mu, \el+1)} {n+1}  = \uubar \mu \la {n-2\el} e_{n-1} \power \el$.  
\item   $\uu {(\la, \el)} {(\mu, \el+1)} {n+1}  = \ddbar \mu \la {n-2\el} e_{n} \power {\el+1}$. 
\end{enumerate}
\end{theorem}

\begin{proof}  We suppose that the branching factors are determined by the recursive formulas of \hyperref[theorem:  recursive determination of the branching factors]{Theorem~\ref*{theorem:  recursive determination of the branching factors}}.  

For part (1), the formula is given by \hyperref[theorem:  recursive determination of the branching factors]{Theorem~\ref*{theorem:  recursive determination of the branching factors}}, part (1)  if $\el = 0$.  Assume $\el >0$ and observe
$$
\dd {(\la, \el)} {(\mu, \el)} {n+1} = \uu {(\mu, \el-1)} {(\la, \el)} n = \dd {(\la, \el -1 )} {(\mu, \el -1)} {n-1} e_{n-1};
$$
Repeating this a total of $\el$ times, we get
$$
\dd {(\la, \el)} {(\mu, \el)} {n+1}  = \dd {(\la, 0)} {(\mu, 0)} {n+1 - 2\el} e_{n+1 - 2\el} \cdots e_{n-3} e_{n-1}
=  \ddbar \la \mu {n + 1 - 2 \el} e_{n-1} \power {\el}.
$$
The proof of part (2) is similar.  
For part (3), we have
$$
\dd {(\la, \el)} {(\mu, \el+1)} {n+1} = \uu {(\mu, \el)} {(\la, \el)} n,
$$
and we apply part (2) to get the desired formula.  For part (4),
$$
\uu {(\la, \el)} {(\mu, \el+1)} {n+1}  = \dd {(\mu, \el)} {(\la, \el)} n  e_n.
$$
Apply part (1)  to get  
$$
\uu {(\la, \el)} {(\mu, \el+1)} {n+1}  = \dd {(\mu, \el)} {(\la, \el)} n  e_n = \ddbar \mu \la {n-2\el}  e_{n-2} \power \el e_n = \ddbar \mu \la {n-2\el}  e_n \power {\el+1}.
$$
\end{proof}

\section{Applications}   \label{section: applications}

We will apply our results to the following examples:  the BMW algebras, the Brauer algebras, the partition algebras, and the Jones--Temperley--Lieb algebras.  For each example, let $R_0$  denote the generic ground ring and let $R = R_0[\deltabold\inv]$,  where $e_1^2 = \deltabold e_1$.   We show that our results apply to the algebras defined over $R$, and we give explicit Murphy bases for the algebras.    

We are then able to check, by a computation specific to each algebra,  that the Murphy bases  are, in fact, 
 bases for the algebras defined over the generic ground ring $R_0$.  

\subsection{Preliminaries on tangle diagrams}  \label{subsection:  preliminaries on tangle diagrams}
Several of our examples involve {\em tangle diagrams} in the rectangle $\mathcal R = [0, 1] \times [0, 1]$.
Fix points $a_i \in [0, 1]$,  $i \ge 1$,  with $0 < a_1 < a_2 < \cdots$. Write
$\p i = (a_i, 1)$ and $\overline{ \p i} = (a_i, 0)$.

Recall that a {\em knot diagram} means a collection of piecewise smooth closed curves in the plane
which may have intersections and self-intersections, but only simple
transverse intersections.  At each intersection or crossing, one of the
two strands (curves) which intersect is indicated as crossing
over the other.  

An {\em $(n,n)$--tangle diagram}  is a piece of a
knot diagram in $\mathcal R$  consisting of exactly $n$ topological intervals and possibly some number of closed curves, such that:  (1)   the endpoints of the intervals are the points $\p 1, \dots, \p n, \pbar 1, \dots, \pbar n$, and these are the only points of intersection of the family of curves with the boundary of the rectangle, and (2)  each interval intersects the boundary of the rectangle transversally.

An  {\em $(n,n)$--Brauer diagram} is a ``tangle'' diagram  containing no closed curves, 
in which information about over and under crossings is ignored.  Two Brauer diagrams are identified if the pairs of boundary points joined by curves is the same in the two diagrams.
By convention, there is a unique $(0, 0)$--Brauer diagram,  the empty diagram with no curves.
For $n \ge 1$,  the number of $(n,n)$--Brauer diagrams is  $(2n-1)!! = (2n-1)(2n-3)\cdots (3)(1)$.

\ignore{
A {\em Temperley--Lieb} diagram is a Brauer diagram without crossings.  For $n \ge 0$,  number of $(n, n)$--Temperley--Lieb diagrams is the Catalan number $\frac{1}{n+1} {2n \choose n}$.
}

For any of these types of diagrams, we call $P = \{\p 1, \dots, \p n, \pbar 1,\dots,  \pbar n\}$ the set of {\em vertices} of the diagram,  $P^+ =    \{\p 1, \dots, \p n\}$ the set of {\em top vertices},  and
$P^- =  \{\pbar 1,\dots,  \pbar n\}$ the set of {\em bottom vertices}.  A curve or {\em strand} in the diagram is called a {\em vertical} or {\em through} strand if it connects a top vertex and a bottom vertex,  and a {\em horizontal} strand if it connects two top vertices or two bottom vertices.

\subsection{Birman--Murakami--Wenzl algebras}  \label{subsection: BMW algebras}
\def\hods{\unskip\kern.55em\ignorespaces}
\def\mods{\vskip-\lastskip\vskip4pt}
\def\ods{\vskip-\lastskip\vskip4pt plus2pt}
\def\bods{\vskip-\lastskip\vskip12pt plus2pt minus2pt}

\begin{definition}  \label{definition: BMW algebra}
Let $S$ be an integral domain with invertible elements $z$ and $q$ and an element $\delta$ satisfying $z\inv - z = (q\inv -q)(\delta -1)$.  The {\em Birman--Murakami--Wenzl algebra}
$W_n=W_n(S; z, q, \delta)$ is the unital $S$--algebra  with generators $g_i^{\pm 1}$  and $e_i$ ($1 \le i \le n-1$) and relations:
\begin{enumerate}
\item (Inverses) \hods $g_i g_i\inv = g_i\inv g_i = 1$.
\item (Essential idempotent relation)\hods $e_i^2 = \delta e_i$.
\item (Braid relations) \hods $g_i g_{i+1} g_i = g_{i+1} g_i g_{i+1}$ 
and $g_i g_j = g_j g_i$ if $|i-j|  \ge 2$.
\item (Commutation relations)  \hods $g_i e_j = e_j g_i$  and
$e_i e_j = e_j e_i$  if $|i-j|\ge 2$. 
\item (Tangle relations)\hods $e_i e_{i\pm 1} e_i = e_i$, $g_i
g_{i\pm 1} e_i = e_{i\pm 1} e_i$, and $ e_i  g_{i\pm 1} g_i=   e_ie_{i\pm 1}$.
\item (Kauffman skein relation)\hods  $g_i - g_i\inv = (q - q\inv)(1- e_i )$.
\item (Untwisting relations)\hods $g_i e_i = e_i g_i = z\inv e_i$,
and $e_i g_{i \pm 1} e_i = z e_i$.
\end{enumerate}
\end{definition}
Morton and Wassermann~\cite{Morton-Wassermann} give a realization of the BMW algebra as an algebra of  $(n, n)$--tangle diagrams modulo regular isotopy and the following  {\em Kauffman skein relations:}  
\def\negcrossing{ 
\begin{array}{c}
\begin{tikzpicture}
\draw (-6.5,0) -- (-5.7,-0.8); 
\draw (-6.5,-0.8) -- (-6.2,-0.5);
\draw (-6.0,-0.3) -- (-5.7,0);
\end{tikzpicture}
\end{array}
}

\def\poscrossing
{ 
\begin{array}{c}
\begin{tikzpicture}
\draw (-4.2,0) --  (-5.0,-0.8); 
\draw (-5,0) -- (-4.7,-0.3);
\draw (-4.2,-0.8) -- (-4.5,-0.5);
\end{tikzpicture}
\end{array}
}

\def\horizontalSmoothing
{
\begin{array}{c}
\begin{tikzpicture}
\draw (-2.0,0) .. controls (-1.6,-0.3) .. (-1.2,0);
\draw (-2.0,-0.8) .. controls (-1.6,-0.5) .. (-1.2,-0.8);
\end{tikzpicture}
\end{array}
}

\def\verticalSmoothing
{
\begin{array}{c}
\begin{tikzpicture}
\draw (1.0,0) .. controls (1.1,-0.4) .. (1.0,-0.8);
\draw (1.6,0) .. controls (1.5,-0.4) .. (1.6,-0.8);
\end{tikzpicture}
\end{array}
}

\def\negTwist{
\begin{array}{c}
\begin{tikzpicture}
\draw (0,0) -- (0.3,-0.3); 
\draw (0.4,-0.4) .. controls (1.0,-1.0) and  (-0.4,-1.0)   .. (0.4,-0.3) -- (0.8,0);
\end{tikzpicture}
\end{array}
}

\def\posTwist{
\begin{array}{c}
\begin{tikzpicture}
\draw (0.5,-0.3) -- (0.8,0); 
\draw (0,0) -- (0.4,-0.3) .. controls (1.2,-1.0) and  (-0.2,-1.0) ..  (0.4,-0.4);
\end{tikzpicture}
\end{array}
}

\def\HorizontalLine{
\begin{array}{c}
\begin{tikzpicture}
\draw (1.7,0) .. controls (2.1,-0.4) .. (2.5,0);
\end{tikzpicture}
\end{array}
}

\begin{enumerate}
\item Crossing relation:\quad
$$\poscrossing \ - \  \negcrossing \ = \  (q-q\inv)\  \Bigg( \ \verticalSmoothing \ - \  \horizontalSmoothing \ \Bigg).$$

\item Untwisting relation:\quad
$$\negTwist = z\inv \ \HorizontalLine  \quad \text{and} \quad  \posTwist  = z \ \HorizontalLine.$$
\item  Free loop relation:  $T\, \cup \, \bigcirc = \delta \, T, $  where $T\, \cup \, \bigcirc$ means the union of a tangle diagram $T$ and a closed loop having no crossings with $T$.
\end{enumerate}
In the tangle picture, the generators $g_i$ and $e_i$ are represented by the diagrams
\begin{align*}
\begin{matrix}
\begin{tikzpicture}
\node at (-7.4,-0.5) {$\displaystyle g_i= \color{black}$};
\node at (-6.3,-0.4) {$\displaystyle \cdots \color{black}$};
\draw (-6.8,0) -- (-6.8,-0.8);
\draw (-5.8,0) -- (-5.8,-0.8);
\draw (-5,0) -- (-4.7,-0.3);
\draw (-4.2,-0.8) -- (-4.5,-0.5);
\draw (-4.2,0) -- (-5.0,-0.8);
\draw (-3.4,0) -- (-3.4,-0.8);
\draw (-2.4,0) -- (-2.4,-0.8);
\filldraw [black] (-6.8,0) circle (1.2pt);
\filldraw [black] (-6.8,-0.8) circle (1.2pt);
\filldraw [black] (-5.8,0) circle (1.2pt);
\filldraw [black] (-5.8,-0.8) circle (1.2pt);
\filldraw [black] (-5.0,0) circle (1.2pt);
\filldraw [black] (-5.0,-0.8) circle (1.2pt);
\filldraw [black] (-4.2,0) circle (1.2pt);
\filldraw [black] (-4.2,-0.8) circle (1.2pt);
\filldraw [black] (-3.4,0) circle (1.2pt);
\filldraw [black] (-3.4,-0.8) circle (1.2pt);
\filldraw [black] (-2.4,0) circle (1.2pt);
\filldraw [black] (-2.4,-0.8) circle (1.2pt);
\node at (-2.9,-0.4) {$\displaystyle \cdots \color{black}$};
\node at (-5.0,-1.1) {$\displaystyle i \color{black}$};
\node at (-4.1,-1.1) {$\displaystyle i+1 \color{black}$};
\node at (-1.4,-0.4) {$\displaystyle \text{and} \color{black}$};
\node at (0.0,-0.5) {$\displaystyle e_i= \color{black}$};
\node at (1.1,-0.4) {$\displaystyle \cdots \color{black}$};
\draw (0.6,0) -- (0.6,-0.8);
\draw (1.6,0) -- (1.6,-0.8);
\draw (2.4,0)  .. controls (2.6,-0.2) and (3.0,-0.2) .. (3.2,0);
\draw (2.4,-0.8)  .. controls (2.6,-0.6) and (3.0,-0.6) .. (3.2,-0.8);
\draw (4.0,0) -- (4.0,-0.8);
\draw (5.0,0) -- (5.0,-0.8);
\filldraw [black] (0.6,0) circle (1.2pt);
\filldraw [black] (0.6,-0.8) circle (1.2pt);
\filldraw [black] (1.6,0) circle (1.2pt);
\filldraw [black] (1.6,-0.8) circle (1.2pt);
\filldraw [black] (2.4,0) circle (1.2pt);
\filldraw [black] (2.4,-0.8) circle (1.2pt);
\filldraw [black] (3.2,0) circle (1.2pt);
\filldraw [black] (3.2,-0.8) circle (1.2pt);
\filldraw [black] (4,0) circle (1.2pt);
\filldraw [black] (4,-0.8) circle (1.2pt);
\filldraw [black] (5,0) circle (1.2pt);
\filldraw [black] (5,-0.8) circle (1.2pt);
\node at (4.5,-0.4) {$\displaystyle \cdots \color{black}$};
\node at (2.4,-1.1) {$\displaystyle i \color{black}$};
\node at (3.3,-1.1) {$\displaystyle i+1 \color{black}$};
\end{tikzpicture}
\end{matrix}
\end{align*}
There is evidently a unital algebra homomorphism from $W_n$ to $W_{n+1}$ taking generators to generators; from the tangle realisation, one can see that this homomorphism is injective, so $W_n$ is a subalgebra of $W_{n+1}$. 
The symmetry of the defining relations for $W_n$ ensures that the  assignments 
\begin{align*}
g_i^*=g_i, \qquad e_i^*=e_i. 
\end{align*}
determine an involutory algebra anti-automorphism of $W_n$.   In the tangle picture, the involution $*$ acts on tangles by flipping them over
a horizontal line.

If $v\in\mathfrak{S}_n$ and $v=s_{i_1}s_{i_2}\cdots s_{i_j}$ is a reduced expression then the element
$g_v=g_{i_1}g_{i_2}\cdots g_{i_j}$
depends only on $v$. For $i,j=1,2,\ldots,$ let
\begin{align*}
g_{i,j}&=
\begin{cases}
g_ig_{i+1}\cdots g_{j-1},&\text{if $j\ge i$,}\\
g_{i-1}g_{i-2}\cdots g_j,&\text{if $i>j$.}
\end{cases}
\end{align*}
Let $J_n$ denote the ideal $W_ne_{n-1}W_n$;  in the tangle picture, this is the ideal spanned by tangle diagrams with at least one horizontal strand.
The map $W_n/J_n \rightarrow \mathcal{H}_n=\mathcal{H}_n(S, q^2)$ determined by
$g_v+ J_n  \mapsto T_v$, for $v\in\mathfrak{S}_n$,  is an algebra isomorphism. 
\ignore{
\begin{align*}
g_v+ J_n&\mapsto T_v, \quad \text{for $v\in\mathfrak{S}_n,}
\end{align*}
}

\subsubsection{The Murphy basis} 
The generic  ground ring for the BMW algebras is
 $$
 R_0 = \Z[\zbold^{\pm1}, \qbold^{\pm1}, \deltabold]/\langle \zbold\inv - \zbold = (\qbold\inv - \qbold)(\deltabold -1) \rangle,
 $$
 where $\zbold$, $\qbold$, and $\deltabold$ are indeterminants over $\Z$.
$R_0$ is an integral domain whose field of fractions is $F \cong \Q(\zbold, \qbold)$, with
\begin{equation}  \label{equation:  solution for delta in field of fractions}
\deltabold   
 = \frac{\zbold - \zbold\inv}{\qbold - \qbold\inv} + 1  = \frac{(\zbold + \qbold)(\qbold \zbold -1)}{\zbold(\qbold^2  - 1)}.
\end{equation}
Let $R = R_0[\deltabold\inv]$, and     
Write $\bmw n(R)$ for 
 $\bmw n(R; \zbold, \qbold, \deltabold)$  and $\mathcal H_n(R)$ for $\mathcal H_n(R; \qbold^2)$.
 It is observed  in ~~\cite{MR2794027}, Section 5.4,  that the pair of towers  $(\bmw n(R))_{n \ge 0}$  and $(\mathcal H_n(R))_{n \ge 0}$ satisfy the framework axioms~\eqref{axiom: involution on An}--\eqref{axiom: semisimplicity}  of \hyperref[subsection:  cellularity and jones setting and correction]{Section~\ref*{subsection:  cellularity and jones setting and correction}}.  Axiom~\eqref{axiom Hn coherent}  holds by \hyperref[corollary:  Hecke algebras strongly coherent]{Corollary~\ref*{corollary:  Hecke algebras strongly coherent}}.   Axiom~\eqref{axiom:  Delta J} hold for $\bmw n(R)$,  by the remarks at the end of \hyperref[subsection:  cellularity and jones setting and correction]{Section~\ref{subsection:  cellularity and jones setting and correction}}.  Finally, Axiom~\eqref{axiom: Hn cyclic cellular}  holds by \hyperref[corollary Hecke algebras cyclic cellular]{Corollary~\ref*{corollary Hecke algebras cyclic cellular}}.  Therefore, by \hyperref[theorem: jones tower coherent tower cyclic cellular]{Theorem~\ref*{theorem: jones tower coherent tower cyclic cellular}}, the tower of algebras $(\bmw n(R))_{n \ge 0}$ is a strongly coherent tower of cyclic cellular algebras.

By the discussion in \hyperref[subsection results on cellularity from Goodman Graber]{Section~\ref*{subsection results on cellularity from Goodman Graber}}, the partially ordered set $\widehat{W}_n$ in the cell datum for $\bmw n(R)$  can be realized as
\begin{align*}
\widehat{W}_n=\left\{ 
(\lambda,l)\,\big|\,  0 \le \el \le \lfloor n/2\rfloor \text{ and } 
\lambda\in\widehat{\mathcal{H}}_{n-2l}
\right\}
\end{align*}
with $(\lambda,l)\unrhd(\mu,m)$  if 
$\el > m$ or if $\el = m$ and $\la \rhd \mu$ in $\widehat{\mathcal H}_{n-2 \el}$.  The branching diagram 
$\widehat{W}$ of the tower $(\bmw n)_{n \ge 0}$ is that obtained by reflections from $\widehat{\mathcal H}$ ($=$ Young's lattice).   Thus, the branching relation is $(\la, \el) \to (\mu, m)$ only if $m \in \{\el, \el + 1\}$; $(\la, \el) \to (\mu, \el)$ if and only if $\la \to \mu$ in Young's lattice, and $(\la, \el) \to (\mu, \el + 1)$ if and only if $\mu \to \la$ in Young's lattice.

For each $n \ge 0$ for for each $\mu \in \widehat{\mathcal H}_n$, define
\begin{equation*}
{c}_{(\mu,0)}=\sum_{v\in\mathfrak{S}_\mu}q^{\el(v)}g_v;
\end{equation*}
thus ${c}_{(\mu,0)}$ is a preimage in $\bmw n$ of $m_\mu \in \mathcal H_n$ (defined in Equation~\eqref{equation:  element m mu in Hecke algebra}).
For $n \ge 2$ and $(\mu,m)\in\widehat{W}_n$, let
\begin{align*}
c_{(\mu,m)}={c}_{(\mu,0)}e_{n-1}^{(m)},
\end{align*}
where $e_{n-1}^{(m)}$ is defined in Equation~\eqref{notation e sub i super ell}.

Let $i \ge 1$ and $\lambda\in\widehat{\mathcal{H}}_{i-1}$ and $\mu\in\widehat{\mathcal{H}}_{i}$, with $\la \to \mu$  in $\widehat{\mathcal H}$.  
If $\mu=\lambda\cup\{(r,\mu_r)\}$, let $a=\sum_{j=1}^{r}\mu_j$, and define
\begin{align}
\bar{u}_{\lambda\to\mu}^{\,(i)}=g_{i,a}
\sum_{k=0}^{\lambda_r}q^{k}g_{a,a-k}\quad \text{and} \quad
\bar{d}_{\lambda\to\mu}^{\,(i)}=g_{a,i}.
\end{align}
These are liftings in $\bmw i$ of the branching factors in the Hecke algebra $\mathcal H_i$, as determined in \hyperref[corollary:  d branching factors for the Hecke algebra]{Corollary~\ref*{corollary:  d branching factors for the Hecke algebra}} and \hyperref[corollary: u branching factors for the Hecke algebra]{Corollary~\ref*{corollary: u branching factors for the Hecke algebra}}.

\ignore{
By \hyperref[theorem:  closed form determination of the branching factors]{Theorem~\ref*{theorem:  closed form determination of the branching factors}}, the branching factors for the tower $(W_n)_{n \ge 0}$ can be chosen as follows:
If $(\lambda,l)\in\widehat{W}_i$, $(\mu,l)\in\widehat{W}_{i+1}$, and $(\lambda,l)\to(\mu,l)$ in $\widehat{W}$, define 
\begin{align*}
d_{(\lambda,l)\to(\mu,l)}^{(i+1)}= \bar{d}_{\lambda\to\mu}^{\,(i-2l+1)}e_{i-1}^{(l)}
\end{align*}
and for $(\lambda,l)\in\widehat{W}_i$, $(\mu,l+1)\in\widehat{W}_{i+1}$, such that $(\lambda,l)\to(\mu,l+1)$ in $\widehat{W}$, define 
\begin{align*} d_{(\lambda,l)\to(\mu,l+1)}^{(i+1)}=\bar{u}_{\mu\to\lambda}^{(i-2l)}e_{i-1}^{(l)}.
\end{align*}
For   $(\la, \el) \in \widehat{W}_i$  and a path
$$\mathfrak{t}=((\emptyset,0),(\lambda^{(1)},l_1),\ldots,(\lambda^{(i)},l_{i}) = (\la, \el) )\in\widehat{W}_{i}^{(\lambda,l)},$$
 let 
\begin{align*}
d_\mathfrak{t}=
d_{(\lambda^{(i-1)},l_{i-1})\to (\lambda^{(i)},l_{i})}^{(i)}
d_{(\lambda^{(i-2)},l_{i-2})\to (\lambda^{(i-1)},l_{i-1})}^{(i-1)}
\cdots d_{(\emptyset,0)\to(\lambda^{(1)},l_{1})}^{(1)}.
\end{align*}
}  

For $(\la, \el) \in \widehat W_{i}$ and $(\mu, m) \in \widehat W_{i+1}$ with $(\la, \el) \to (\mu, m)$, determine the branching factors $\dd {(\la, \el)} {(\mu, m)} {i+1}$  
according to the formulas of \hyperref[theorem:  closed form determination of the branching factors]{Theorem~\ref*{theorem:  closed form determination of the branching factors}};
for a path $\mft \in \widehat W_n \power {\la, \el}$,  define $d_\mft$ to be the ordered product of these branching factors along the path $\mft$, as in Equation\eqref{equation: ordered product of d coefficients}. 
From  \hyperref[label:1]{Corollary~\ref*{label:1}} \ignore{and \hyperref[theorem:  closed form determination of the branching factors]{Theorem~\ref*{theorem:  closed form determination of the branching factors}} }
we obtain:

\begin{proposition} \label{proposition:  Murphy basis of BMW over generic with delta inverse adjoined}
 Let $R_0$ denote the generic ground ring for the BMW algebras and let $R = R_0[\deltabold\inv]$.  
 Let $W_n(R) = W_n(R; \zbold, \qbold, \deltabold)$ denote the BMW algebra over $R$.  
For $n \ge 0$,   the set 
\begin{align}\label{b-m-b}
\mathscr{W}_n=
\left\{
d_\mathfrak{s}^*c_{(\lambda,l)}d_\mathfrak{t}\,\big|\,
\text{$\mathfrak{s},\mathfrak{t}\in\widehat{W}_n^{(\lambda,l)}$, $(\lambda,l)\in\widehat{W}_n$}
\right\}
\end{align}
is an $R$--basis for $\bmw n(R)$, and $(\bmw n(R),*,\widehat{W}_n,\unrhd,\mathscr{W}_n)$ is a cell datum for $\bmw n(R)$. 
\end{proposition}

In the remainder of this section,  we will show that the Murphy bases $\mathscr W_n$  are bases of the BMW algebras defined over the generic ground ring $R_0$.     First note that the elements $d_\mathfrak{s}^*c_{(\lambda,l)}d_\mathfrak{t}$  are actually defined over $R_0$ and are linearly independent.  The issue is to show that $\mathscr W_n$  spans the BMW algebra over $R_0$.    To do this, we examine the transition matrix between a Morton--Wassermann basis of the BMW algebra and $\mathscr W_n$.

\subsubsection{Morton-Wasserman tangle bases}  
We begin by describing the Morton--Wassermann tangle bases of the BMW algebras.  We identify the BMW algebras with their tangle realizations, following ~\cite{Morton-Wassermann}.

To each $(n, n)$--tangle diagram $T$, associate a Brauer diagram $\conn(T)$ by deleting the closed strands in $T$  and forgetting information about over and under crossings.  Thus   $\conn(T)$ has a strand connecting two vertices if and only if $T$ has a strand connecting the same two vertices.

Order the vertices of a tangle or Brauer diagram by $\p 1 < \p2 \cdots < \p n < \pbar n <\cdots < \pbar 1$, that is,  in clockwise order around the boundary of $\mathcal R$.    The {\em length} $\ell(D)$   of a Brauer diagram $D$ is the minimal number of crossings of strands in a physical drawing of the diagram, that is, the number of $4$--tuples of vertices $(a, b , c ,d)$  such that $a < b < c < d$  and $(a, c)$ and $(b, d)$ are strands of $D$.  

\begin{definition}  Say that an $(n,n)$--tangle diagram $T$  is {\em layered}  with respect to some total ordering $(t_1, t_2, \dots,  t_k)$ of  its strands,  if (1)  whenever $i <j$,   every crossing of $t_i$ with $t_j$ is an over crossing,  and (2)   each individual strand of $T$ is unknotted, i.e. ambient isotopic to a strand with no self--crossings.       Say that $T$ is {\em layered} if it is layered with respect to some total ordering of its strands.   Say that a layered tangle diagram is {\em simple} if it has no closed strands and no strand has self--crossings.
\end{definition}

Note that any simple layered tangle diagram $T$  is ambient isotopic to a simple layered tangle  diagram in which any two distinct strands have at most one crossing;    the number of crossings  in such a representative of $T$ is the length of $\conn(T)$.  

\begin{theorem}[\cite{Morton-Wassermann},  Theorems 2.10 and 4.2]   For each $(n, n)$--Brauer diagram $D$,  chose a simple  layered   $(n, n)$--tangle diagram $T$ with $\conn(T) = D$.  Then the resulting collection of tangle diagrams is  a basis of the BMW algebra $\bmw n(R_0)$.   
\end{theorem}

Call any such basis a {\em Morton-Wassermann tangle basis}.

\begin{lemma}  \label{lemma:  transition between MW bases 0}
 Let $T$ and $S$  be simple layered $(n, n)$--tangle diagrams with the same underlying Brauer diagram,  $\conn(T) = \conn(S) = D$.      Then  $T - S$  is in the $\Z[\qbold - \qbold\inv]$--span of simple layered tangle diagrams with fewer than $\ell(D)$ crossings.
\end{lemma}

\begin{proof}   Assume without loss of generality that the number of crossings of $T$ and of $S$ is the length of $D$.   Suppose that $S$ is layered with respect to an ordering $(t_1, t_2, \dots, t_n)$ of its strands and $T$ is layered with respect to an ordering $(t_{\pi(1)},  t_{\pi(2)}, \dots, t_{\pi(n)})$  for some permutation $\pi$ of $\{1, 2, \dots, n\}$.   For brevity, say that $T$ is layered with respect to $\pi$.   The permutation $\pi$ may not be unique, so assume that $\pi$ has been chosen with minimal length for the given tangle diagram $T$.   

If $\pi$ is the identity permutation,  then $T$ and $S$ are ambient isotopic,  so represent the same element of $\bmw n$.   
Assume that $\pi$ is not the identity and assume inductively that the assertion holds when $T$ is replaced by a simple layered tangle diagram $T'$ with $\conn(T') = D$,  whenever $T'$  is layered with respect to a permutation $\pi'$ with $\ell(\pi') < \ell(\pi)$.  

Since $\pi$ is not the identity permutation, there exists $i$ such that $\pi(i) > \pi(i+1)$.  If  the strands $t_{\pi(i)}$ and $t_{\pi(i + 1)}$   do not cross, then $T$ is also layered with respect to the shorter permutation $\pi'= (i, i+1) \circ \pi$,  contradicting the choice of $\pi$ as having minimal length.   Therefore 
$t_{\pi(i)}$ and $t_{\pi(i +1)}$ have a (unique) crossing, with $t_{\pi(i)}$ crossing over $t_{\pi(i + 1)}$.  
Because $T$ is layered with respect to $\pi$  there is no third strand $t = t_{\pi(k)}$ such that $t_{\pi(i)}$ has an over crossing with  $t$ and $t$ has an over crossing with  $t_{\pi(i+1)}$.    Let $U$ be the tangle diagram obtained by changing the crossing of $t_{\pi(i)}$ and $t_{\pi(i +1)}$, and let $T_0$ and $T_\infty$ be the two tangle diagrams obtained by smoothing this crossing.    It follows that all three of these tangle diagrams are simple and layered,    $T_0$ and $T_\infty$  have fewer than $\ell(D)$ crossings, 
and by the Kauffman skein relation,
$$
T = U + (\qbold - \qbold\inv)(T_0 - T_\infty).
$$
Since $U$ is layered with respect to $\pi' = (i, i+1) \circ \pi$, with $\ell(\pi') = \ell(\pi) -1$,   the conclusion follows from the induction hypothesis.
  \end{proof}

\begin{proposition}  \label{proposition:  transition between MW bases 1}
 Let $\mathcal B$ be a Morton--Wassermann tangle basis of $\bmw n(R_0)$ and let $T$ be a simple layered $(n,n)$--tangle diagram.  The the coefficients of $T$ with respect to the basis $\mathcal B$ are in $\Z[\qbold - \qbold\inv]$.   In fact, $T$ is in the  $\Z[\qbold - \qbold\inv]$--span of basis elements with no more than
 $\ell(D)$ crossings, where $D = \conn(T)$.  
\end{proposition}

\begin{proof}   
We can assume that the number of crossings of $T$ is $\ell(D)$, where $D = \conn(T)$.     We proceed by induction on the number of crossings.  If $T$ has no crossings, then $T$ is an element of $\mathcal B$,  because up to ambient isotopy,  there is a  unique simple layered tangle diagram with underlying Brauer diagram $D$.     Assume that   $\ell(D)$ is positive and that the statement holds for all simple layered tangle diagrams with fewer than $\ell(D)$  crossings.   There is a simple layered tangle diagram $S$ in $\mathcal B$ with $\conn(S) = D$.   By the previous lemma,  $T - S$ is a 
$\Z[\qbold-\qbold\inv]$--linear combination of simple layered tangle diagrams with fewer  than $\ell(D)$ crossings, and thus the result follows from the induction hypothesis.
\end{proof}

\begin{corollary} \label{corollary:  transition between MW bases 1}
The transition matrix between any two Morton--Wassermann  tangle bases of $\bmw n$  has entries in $\Z[\qbold - \qbold\inv]$.  
\end{corollary}

\begin{lemma}  \label{lemma:  transition between MW bases 2}
Let $\mathcal B$ be a Morton--Wassermann tangle basis of $\bmw n(R_0)$.  The matrix with respect to $\mathcal B$ of left or right multiplication by $g_i$ or $g_i\inv$  has entries in $\Z[\zbold^{\pm 1},  (\qbold - \qbold\inv)]$.
\end{lemma}

\begin{proof}     Let $T$ be an element of $\mathcal B$;  assume without loss of generality that the number of crossings of $T$ is $\ell(D)$ where $D$ denotes $\conn(T)$.    We have to show that $T g_i$  is in the 
$\Z[\zbold^{\pm 1},  (\qbold - \qbold\inv)]$--span of $\mathcal B$.    We proceed by induction on the number of crossings of $T$.    If $T$ has no crossings, then $T g_i$ is simple and layered, so the assertion follows from \hyperref[proposition:  transition between MW bases 1]{Proposition~\ref{proposition:  transition between MW bases 1}}.  

Assume that $\ell(D) > 0$ and that the assertion holds when $T$ is replaced by an element of $\mathcal B$ with fewer crossings.   If the vertices $\pbar i$ and $\pbar {i+1}$ of $T$ are connected by a strand, then
$T g_i = z\inv T$, so we are done.    Otherwise, let $s$ and $t$ denote the distinct strands of $T$  incident on the vertices $\pbar i$ and $\pbar {i+1}$.    Let $S$ be a simple layered tangle diagram such that
$\conn(S) = D$,  $S$ has $\ell(D)$ crossings, and     $S$ is layered with respect to an ordering $(t, s, \dots)$ of the strands.  
Then  $S g_i$ is simple and layered,  so is in the $\Z[\qbold - \qbold\inv]$--span  of $\mathcal B$, by \hyperref[proposition:  transition between MW bases 1]{Proposition~\ref*{proposition:  transition between MW bases 1}}.    Moreover $(T - S) g_i$  is in 
the $\Z[\zbold^{\pm 1},  (\qbold - \qbold\inv)]$--span of $\mathcal B$,  by combining  \hyperref[lemma:  transition between MW bases 0]{Lemma~\ref*{lemma:  transition between MW bases 0}}, \hyperref[proposition:  transition between MW bases 1]{Proposition~\ref* {proposition:  transition between MW bases 1}}, and the induction hypothesis.

The proof for right multiplication by $g_i\inv$ or by left multiplication by $g_i^{\pm 1}$ is similar.
\end{proof} 

\begin{remark}   \label{remark:  transition between MW bases 3}
   Let $T$ be a simple layered tangle diagram.  From the proof of \hyperref[lemma:  transition between MW bases 0]{Lemma~\ref*{lemma:  transition between MW bases 0}}  and \hyperref[proposition:  transition between MW bases 1]{Proposition~\ref*{proposition:  transition between MW bases 1}}, one sees that all the elements of the Morton--Wassermann basis $\mathcal B$ that figure in  the expansion of $T$ with respect to $\mathcal B$ are obtained by changing or smoothing various crossings of $T$.   Hence, if $T$ has a strand $s$ connecting two vertices $v_1, v_2$, such that $s$ has no crossings with any other strand, then all elements of $\mathcal B$ appearing in the expansion of $T$ also have a strand connecting $v_1$ and $v_2$.  Likewise, from the proof of \hyperref[lemma:  transition between MW bases 2]{Lemma~\ref*{lemma:  transition between MW bases 2}}, if $\{\pbar i, \pbar{i+1}\} \cap \{v_1, v_2\} = \emptyset$,   then all elements of $\mathcal B$ appearing in the expansion of $T g_i$ have a strand connecting connecting $v_1$ and $v_2$. 
\end{remark}

\subsubsection{The transition matrix from a tangle basis to the Murphy basis}   

 We examine the coefficients of the expansion of an element 
 \begin{equation}  \label{equation:  typical Murphy basis element for BMW}
d_\mathfrak{s}^*  c_{(\la, 0)}  e_{n-1}  \power \el d_\mathfrak{t}, 
\end{equation}
 of $\mathscr W_n$    with respect to a Morton--Wassermann tangle basis $\mathcal B$ of $\bmw n(R_0)$.

\begin{definition}  Let $k \le n$ and $m \le \lfloor k/2 \rfloor$.    A tangle diagram $T$ is {\em of type} $(k, m)$ if $T$ has strands connecting the adjacent pairs of bottom vertices
\begin{equation}  
(\pbar{k-2m + 1}, \pbar{k-2m +2}),  \dots,  (\pbar{k-1}, \pbar k)  \qquad  \text{($m$ strands)}. 
\end{equation}
\end{definition}

\begin{lemma} \label{lemma:  expansion of GE basis in MW basis 1} 
If $T$ is an element of $\mathcal B$ of type $(k, m)$,  and $(\la, \el) \to (\mu, m)$ is an edge in 
$\widehat{\mathcal H}$ from level $k-1$ to level $k$, then  $T \dd {(\la, \el)} {(\mu, m)} k$ is a 
 $\Z[\qbold^{\pm 1}, \zbold^{\pm 1}]$--linear combination of elements of $\mathcal B$ of type $(k-1, \el)$.  
\end{lemma}

\begin{proof}  There are two cases to consider.    

\textsc{Case 1},   $\el = m$  and $\la \subset \mu$.     Then for some $a \le k - 2 m$,
 $$\dd {(\la, \el)} {(\mu, m)} k =  g_{a, k-2m}  e_{k-2}\power{m}.$$  
 By \hyperref[lemma:  transition between MW bases 2]{Lemma~\ref*{lemma:  transition between MW bases 2}} and \hyperref[remark:  transition between MW bases 3]{Remark~\ref*{remark:  transition between MW bases 3}},    $T  g_{a, k-2m} $  is a $\Z[\qbold^{\pm 1}, \zbold^{\pm 1}]$--linear combination of elements of $\mathcal B$  of type $(k, m)$.   But for any element  $S$  of  $\mathcal B$  of type $(k, m)$,    $S e_{k-2} \power{m}$ is a simple layered tangle diagram of type 
  $(k-1, m)$,   see \hyperref[figure: Multiplication by e power fk]{Figure~\ref*{figure: Multiplication by e power fk}}.   Therefore by \hyperref[roposition:  transition between MW bases 1]{Proposition~\ref*{proposition:  transition between MW bases 1}}  and \hyperref[remark:  transition between MW bases 3]{Remark~\ref*{remark:  transition between MW bases 3}},   $S e_{k-2} \power{m}$  is a 
 $\Z[\qbold - \qbold\inv]$--linear combination of elements of $\mathcal B$ of type   $(k-1, m)$
  Taking into account that $\el = m$,  this gives the result. 
  
\begin{figure}
\begin{tikzpicture}[cross line/.style={preaction={draw=white,-,line width=6pt}}]
\foreach \x in {5.6,7.2,8.8,10.4} 
	\draw (\x,1.8) .. controls (\x+0.2,1.2) and (\x+0.6,1.2) .. (\x+0.8,1.8);
\foreach \x in {4.8,6.4,8.0} 
	\draw (\x,0.5) .. controls (\x+0.2,1.1) and (\x+0.6,1.1) .. (\x+0.8,0.5);
\foreach \x in {4.0,5.6,7.2} 
\draw (\x,0) .. controls (\x+0.2,-0.6) and (\x+0.6,-0.6) .. (\x+0.8,0);
\foreach \x in {4.0,5.6,7.2} 
\draw (\x,-1.3) .. controls (\x+0.2,-0.7) and (\x+0.6,-0.7) .. (\x+0.8,-1.3);
\foreach \x in {8.8} 
	\draw (\x,0) -- (\x,-1.3);
\foreach \x in {4.0,4.8,...,9.6} 
	\filldraw [black] (\x,0) circle (1.1pt);
\foreach \x in {5.6,6.4,...,11.2} 
	\filldraw [black] (\x,1.8) circle (1.1pt);\foreach \x in {4.0,4.8,...,9.6} 
	\filldraw [black] (\x,-1.3) circle (1.1pt);\foreach \x in {4.0,4.8,5.6,...,9.6} 
	\filldraw [black] (\x,0.5) circle (1.1pt);
\foreach \x in {4.0,4.8,5.6,...,9.6} 
 	\draw[dotted] (\x,0) -- (\x,0.5);
\draw (4.0,0.5) -- (3.8,1); \draw[dashed] (3.8,1) -- (3.7,1.25);
\end{tikzpicture}
\caption{}
\label{figure: Multiplication by e power fk}
\end{figure}

 \textsc{ Case 2},  $\el = m -1$   and $\mu \subset \la$.    Then  $\dd {(\la, \el)} {(\mu, m)} k$ is a sum of terms of the form
  $q^s g_{k+1 - 2 m, a}  e_{k-2}\power{m -1}$.    But  $T g_{k+1 - 2 m, a}  e_{k-2}\power{m -1}$ is a simple layered tangle diagram of type $(k-1, m -1)$, see \hyperref[figure: Multiplication by e power f k-1]{Figure~\ref*{figure: Multiplication by e power f k-1}}.   Therefore, again by 
  \hyperref[proposition:  transition between MW bases 1]{Proposition~\ref*{proposition:  transition between MW bases 1}}  and \hyperref[remark:  transition between MW bases 3]{Remark~\ref*{remark:  transition between MW bases 3}},   $T g_{k+1 - 2 m, a}  e_{k-2}\power{m -1}$  is a 
  $\Z[\qbold - \qbold\inv]$--linear combination of elements of $\mathcal B$ of type    $(k-1, m-1)$. Since $\el = m -1$,  this proves the result.
\end{proof}

\begin{figure}[h!]
\begin{tikzpicture}[cross line/.style={preaction={draw=white,-,line width=6pt}}]
\foreach \x in {5.6,7.2,8.8,10.4} 
	\draw (\x,1.8) .. controls (\x+0.2,1.2) and (\x+0.6,1.2) .. (\x+0.8,1.8);
\foreach \x in {4.8,6.4,8.0} 
	\draw (\x,0.5) .. controls (\x+0.2,1.1) and (\x+0.6,1.1) .. (\x+0.8,0.5);
\foreach \x in {5.6,7.2} 
\draw (\x,0) .. controls (\x+0.2,-0.6) and (\x+0.6,-0.6) .. (\x+0.8,0);
\foreach \x in {5.6,7.2} 
\draw (\x,-1.3) .. controls (\x+0.2,-0.7) and (\x+0.6,-0.7) .. (\x+0.8,-1.3);
\foreach \x in {0,0.8,1.6,8.8} 
	\draw (\x,0) -- (\x,-1.3);
\foreach \x in {2.4,3.2,4.0} 
	\draw (\x,0) -- (\x+0.8,-1.3);
\draw[cross line,-] (4.8,0) -- (2.4,-1.3);
\foreach \x in {0,0.8,1.6,...,9.6} 
	\filldraw [black] (\x,0) circle (1.1pt);
\foreach \x in {5.6,6.4,...,11.2} 
	\filldraw [black] (\x,1.8) circle (1.1pt);\foreach \x in {0,0.8,1.6,...,9.6} 
	\filldraw [black] (\x,-1.3) circle (1.1pt);\foreach \x in {4.8,5.6,...,9.6} 
	\filldraw [black] (\x,0.5) circle (1.1pt);
\foreach \x in {4.8,5.6,...,9.6} 
 	\draw[dotted] (\x,0) -- (\x,0.5);
\end{tikzpicture}
\caption{}
\label{figure: Multiplication by e power f k-1}
  \end{figure}

\begin{proposition} \label{proposition:  expansion of GE basis in MW basis}
$d_\mathfrak{s}^*  c_{(\la, 0)}  e_{n-1}  \power \el d_\mathfrak{t}$ is in the 
$\Z[\qbold^{\pm 1}, \zbold^{\pm 1}]$--span of $\mathcal B$.
\end{proposition}

\begin{proof}
Taking into account \hyperref[corollary:  transition between MW bases 1]{Corollary~\ref*{corollary:  transition between MW bases 1}},  we can assume without loss of generality  that the elements  $g_v   e_{n-1}  \power \el $  for $v \in \mathfrak{S}_{n-2f}$   are elements of $\mathcal B$,  as these are simple layered tangle diagrams (with distinct underlying Brauer diagrams).    Moreover,  $g_v  e_{n-1}  \power \el $ is of type $(n, \el)$.  
Thus $ c_{(\la, 0)}  e_{n-1}\power \el  = \sum_{v\in\mathfrak{S}_\la}q^{\el(v)}g_v e_{n-1}\power \el $ is in the 
$\Z[q]$--span of elements of elements of $\mathcal B$ of type $(n, \el)$.   

Let 
$$
\mfs = ((\la\power 0, \el_0),  (\la\power 1, \el_1), \dots,  (\la\power n, \el_n)),
$$
where  $(\la\power 0, \el_0) = (\emptyset, 0)$  and $(\la\power n, \el_n) =  (\la, \el)$.     Then 
$$
d_\mfs =  d\power n_{(\la\power {n-1}, \el_{n-1}) \to (\la\power n, \el_n)}
d\power {n-1}_{(\la\power {n-2}, \el_{n-2}) \to (\la\power {n-1}, \el_{n-1})}  \cdots
$$
By repeated use of \hyperref[lemma:  expansion of GE basis in MW basis 1]{Lemma~\ref*{lemma:  expansion of GE basis in MW basis 1}},   
$ c_{(\la, 0)}  e_{n-1}\power \el  d_\mfs$    is in the $\Z[\qbold^{\pm 1}, \zbold^{\pm 1}]$--span of elements of $\mathcal B$.  But   the expansion of $ (c_{(\la, 0)}  e_{n-1}\power \el  d_\mfs)^* = d_\mfs^*  c_{(\la, 0)}  e_{n-1}\power \el$  involves only elements of $\mathcal B$ of type $(n, \el)$.   By repeated application of \hyperref[lemma:  expansion of GE basis in MW basis 1]{Lemma~\ref*{lemma:  expansion of GE basis in MW basis 1}} once more,   $d_\mathfrak{s}^*  c_{(\la, 0)}  e_{n-1}  \power \el d_\mathfrak{t}$  is in the $\Z[\qbold^{\pm 1}, \zbold^{\pm 1}]$--span of $\mathcal B$.
\end{proof}

\subsubsection{The Murphy basis and the generic ground ring}
\label{subsubsection:  invertibility of transition matrix for BMW}
 Let $B$ denote the matrix of expansion coefficients of the elements of $\mathscr W_n$   with respect to some Morton--Wasserman tangle basis  $\mathcal B$  of  $\bmw n(R_0)$  (and some choice of ordering of  $\mathscr W_n$ and of $\mathcal B$.)      By \hyperref[proposition:  expansion of GE basis in MW basis]{Proposition~\ref*{proposition:  expansion of GE basis in MW basis}}, we know that the matrix $B$ has entries in $ \Z[\zbold^{\pm1}, \qbold^{\pm1}] \subset R_0$.
 On the other hand,  since $\mathscr W_n$ is a basis of the BMW algebra over $R = R_0[\deltabold\inv]$,  it follows that  $B$ is invertible over $R$.     We are going to show that $B$ is invertible over $\Z[\zbold^{\pm1}, \qbold^{\pm1}] $  and therefore
 $\mathscr W_n$ is a basis of $\bmw n$ over $R_0$.

The Brauer algebra $B_n$ over $\Z[\delta]$   is the specialization of $W_n(R_0)$  at $q = 1$ and $z = 1$. (See the following \hyperref[b-r-e]{Section~\ref*{b-r-e}} for details.)   Under the specialization,   the Morton--Wassermann basis  of $\bmw n(R_0)$  specializes to the usual diagram basis of the Brauer algebra, and $\mathscr W_n$ specializes to the corresponding collection of elements of the Brauer algebra, denoted $\mathscr B_n$. 
Moreover, the evaluation of $B$ at $q = 1$ and $z = 1$,  which we denote by $B_{\Z}$,   is the matrix of expansion coefficients of the elements of $\mathscr B_n$ with respect to the diagram basis of   the Brauer algebra.
Let $d$ denote the determinant of $B$ and $\bar d$ the determinant of $B_{\Z}$, which is the evaluation of $d$ at $q =1$ and $z=1$.  Since $B$ is a matrix over $\Z[\zbold^{\pm1}, \qbold^{\pm1}]$,    it follows that $B_{\Z}$ is a matrix over $\Z$, and hence $\bar d$ is an integer.

\begin{lemma}  \label{lemma:  transition matrix from diagram basis to M basis for Brauer}
$B_{\Z}$ is invertible over $\Z$.
\end{lemma}

\begin{proof}  Since $B$ is invertible over $R$,   it follows that $B_{\Z}$ is invertible over $\Z[\deltabold^{\pm 1}]$.   Equivalently,  $\bar d = \det(B_{\Z})$  is a unit in $\Z[\deltabold^{\pm 1}]$.  But $\bar d$ is an integer, so it follows that $\bar d = \pm 1$ and thus $B_{\Z}$ is invertible over $\Z$.  
\end{proof}

\begin{lemma}  $B$ is invertible over $R_0$.  
\end{lemma}

\begin{proof}  Since $B$ is invertible over $R$,  $d = \det(B)$ is a unit in $R$.
We can regard $R$ as a subring of 
$$\overline R = \Z[\zbold^{\pm1}, \qbold^{\pm1}, (\qbold- 1)\inv, (\qbold +1)\inv, (\zbold + \qbold)\inv, (\qbold \zbold -1)\inv], $$
see Equation~\eqref{equation:  solution for delta in field of fractions}.
    Since $d$ is an element of $\Z[\zbold^{\pm1}, \qbold^{\pm1}] \subseteq R_0$ which is a unit in
 in $\overline R$,  it has the form
$$
d = \pm \qbold^a \zbold^b  (\qbold - 1)^c  (\qbold + 1)^e (\zbold + \qbold)^f (\qbold \zbold -1)^g
$$
for some integers $a, b$ and some natural numbers $c, e, f, g$.    But the specialization of $d$ at $q = 1$ and $z = 1$ is equal to $\pm 1$ and therefore we must have $c = e = f = g = 0$.     Thus $d = \pm \qbold^a \zbold^b$ is a unit in $R_0$,  so $B$ is invertible over $R_0$. 
\end{proof}

The invertibility of $B$ over $R_0$ together with \hyperref[proposition:  Murphy basis of BMW over generic with delta inverse adjoined]{Proposition~\ref*{proposition:  Murphy basis of BMW over generic with delta inverse adjoined}} implies the following theorem:

\begin{theorem}  \label{theorem:  Murphy basis of BMW over generic ground ring}
Let $W_n$ denote the BMW algebra over the generic ground ring $R_0$.  The set
\begin{align}\label{b-m-b over generic ground ring}
\mathscr{W}_n=
\left\{
d_\mathfrak{s}^*c_{(\lambda,l)}d_\mathfrak{t}\,\big|\,
\text{$\mathfrak{s},\mathfrak{t}\in\widehat{W}_n^{(\lambda,l)}$, $(\lambda,l)\in\widehat{W}_n$}
\right\}
\end{align} 
is an $R_0$--basis of $W_n$, and $(W_n,*,\widehat{W}_n,\unrhd,\mathscr{W}_n)$ is a cell datum for $W_n$.
 \end{theorem}

\begin{remark}
The basis~\eqref{b-m-b} differs from the Murphy--type basis for the BMW algebras given in~\cite{MR2348099} by a triangular transformation.
\end{remark}

\begin{corollary}  \label{corollary:   cell filtrations for BMW over generic ground ring}
For  $n \ge 0$ and for  $\Delta$ a cell module of  $W_{n+1}$,    the restricted module $\Res^{W_{n+1}}_{W_n}(\Delta)$ has an order preserving cell filtration.
\end{corollary}

\begin{proof}  For  $k \ge 0$,  $(\la, \el) \in \widehat W_k$,  and  $\mft \in  \widehat{W}_{k}\power{\la, \el}$, let
$
m_\mft  =   (c_{(\la, \el)} + W_k^{\rhd (\la, \el)}) d_\mft.
$
Then   $\{m_\mft  : \mft \in \widehat{W}_k\power{\la, \el}\}$   is the basis of the cell module
$\cell W {k} {(\la, \el)}$  derived from the cellular bases $\mathscr W$.   The collection of these bases, as 
$k$ and  $(\la, \el)$ vary,  is a family of path bases, because the path basis condition holds over 
$R = R_0[\deltabold\inv]$, according to \hyperref[lemma: path basis condition for delta-d t basis]{Lemma~\ref*{lemma: path basis condition for delta-d t basis}}, 
and therefore it holds over $R_0$ as well.  It follows from 
\hyperref[lemma:  path bases and cell filtrations]{Lemma~\ref*{lemma:  path bases and cell filtrations}} that
restrictions of cell modules have an order preserving cell filtration.
\end{proof}

\subsection{Brauer algebras}\label{b-r-e}
The Brauer algebras  were defined by Brauer~\cite{MR1503378}. Wenzl~\cite{MR951511} showed that the Brauer algebras are obtained from the group algebra of the symmetric group by the Jones basic construction, and that the Brauer algebras over a field of characteristic zero are generically semisimple. Cellularity of the Brauer algebras was established by Graham and Lehrer~\cite{MR1376244}.

\ignore{
If $\rho_1,\rho_2$ are $(n,n)$--Brauer diagrams, then the composition of diagrams $\rho_1\circ\rho_2$ is the diagram obtained by placing $\rho_1$ above $\rho_2$, identifying each vertex in the bottom row of $\rho_1$ with the corresponding vertex in the top row of $\rho_2$ and deleting any connected components of the resulting diagram which contains only vertices from the middle row. 
}

Let $S$ be an integral domain with a distinguished element $\delta$.  The Brauer algebra
$B_n  = B_n(S; \delta)$ is the free $S$--module with basis the set of $(n, n)$--Brauer diagrams.
The product of two Brauer diagrams is obtained by stacking  them and then replacing each closed loop by a factor of $\delta$;  see ~\cite{MR1503378} or ~\cite{MR951511} for details.

\ignore{
We define a product of two Brauer diagrams, which is a multiple of another Brauer diagram,  as follows.  For 
$(n,n)$--Brauer diagrams $\rho_1,\rho_2$, define
\begin{align*}
\rho_1\rho_2=\delta^c \rho_1\circ \rho_2,
\end{align*}
where $c$ is the number of components removed from the middle row in constructing the composition $\rho_1\circ\rho_2$.  This product extends uniquely to a bilinear product on $B_n$, and it is easy to see that the product is associative.
}

\begin{definition}  \label{definition: Brauer algebra}
Let $S$ be an integral domain and $\delta\in S$.  The {Brauer algebra} $B_n=B_n(S;\delta)$ is the 
free $S$--module with basis the set of $(n, n)$--Brauer diagrams, with bilinear product determined by the multiplication of Brauer diagrams.  By convention, $B_0(S; \delta) = S$.  
\end{definition}

The involution $*$ on $(n, n)$--Brauer diagrams which reflects a diagram in the axis $y = 1/2$
extends linearly to an algebra involution of $B_n(S; \delta)$.
Note that the Brauer diagrams with only vertical strands are in
bijection with permutations of $\{1, \dots, n\}$, and that the
multiplication of two such diagrams coincides with the multiplication of
permutations.  Thus the  Brauer algebra contains the group algebra $S\mf S_n$ of
the permutation group $\mathfrak S_n$ as a unital subalgebra.   The identity element of the Brauer algebra is the diagram corresponding to the trivial permutation.  We will note below that $S \mf S_n$ is also a quotient of $B_n(S; \delta)$.  

Let $s_i$ and $e_i$ denote the following $(n, n)$--Brauer diagrams:
\begin{align*}
\begin{matrix}
\begin{tikzpicture}
\node at (-7.4,-0.5) {$\displaystyle {s_i}= \color{black}$};
\node at (-6.3,-0.4) {$\displaystyle \cdots \color{black}$};
\draw (-6.8,0) -- (-6.8,-0.8);
\draw (-5.8,0) -- (-5.8,-0.8);
\draw (-5,0) -- (-4.2,-0.8);
\draw (-4.2,0) -- (-5,-0.8);
\draw (-3.4,0) -- (-3.4,-0.8);
\draw (-2.4,0) -- (-2.4,-0.8);
\filldraw [black] (-6.8,0) circle (1.2pt);
\filldraw [black] (-6.8,-0.8) circle (1.2pt);
\filldraw [black] (-5.8,0) circle (1.2pt);
\filldraw [black] (-5.8,-0.8) circle (1.2pt);
\filldraw [black] (-5.0,0) circle (1.2pt);
\filldraw [black] (-5.0,-0.8) circle (1.2pt);
\filldraw [black] (-4.2,0) circle (1.2pt);
\filldraw [black] (-4.2,-0.8) circle (1.2pt);
\filldraw [black] (-3.4,0) circle (1.2pt);
\filldraw [black] (-3.4,-0.8) circle (1.2pt);
\filldraw [black] (-2.4,0) circle (1.2pt);
\filldraw [black] (-2.4,-0.8) circle (1.2pt);
\node at (-2.9,-0.4) {$\displaystyle \cdots \color{black}$};
\node at (-5.0,-1.1) {$\displaystyle i \color{black}$};
\node at (-4.1,-1.1) {$\displaystyle i+1 \color{black}$};
\node at (-1.4,-0.4) {$\displaystyle \text{and} \color{black}$};
\node at (0.0,-0.5) {$\displaystyle e_i= \color{black}$};
\node at (1.1,-0.4) {$\displaystyle \cdots \color{black}$};
\draw (0.6,0) -- (0.6,-0.8);
\draw (1.6,0) -- (1.6,-0.8);
\draw (2.4,0)  .. controls (2.6,-0.2) and (3.0,-0.2) .. (3.2,0);
\draw (2.4,-0.8)  .. controls (2.6,-0.6) and (3.0,-0.6) .. (3.2,-0.8);
\draw (4.0,0) -- (4.0,-0.8);
\draw (5.0,0) -- (5.0,-0.8);
\filldraw [black] (0.6,0) circle (1.2pt);
\filldraw [black] (0.6,-0.8) circle (1.2pt);
\filldraw [black] (1.6,0) circle (1.2pt);
\filldraw [black] (1.6,-0.8) circle (1.2pt);
\filldraw [black] (2.4,0) circle (1.2pt);
\filldraw [black] (2.4,-0.8) circle (1.2pt);
\filldraw [black] (3.2,0) circle (1.2pt);
\filldraw [black] (3.2,-0.8) circle (1.2pt);
\filldraw [black] (4,0) circle (1.2pt);
\filldraw [black] (4,-0.8) circle (1.2pt);
\filldraw [black] (5,0) circle (1.2pt);
\filldraw [black] (5,-0.8) circle (1.2pt);
\node at (4.5,-0.4) {$\displaystyle \cdots \color{black}$};
\node at (2.4,-1.1) {$\displaystyle i \color{black}$};
\node at (3.3,-1.1) {$\displaystyle i+1 \color{black}$};
\end{tikzpicture}
\end{matrix}
\end{align*}
It is easy to see that $e_1, \dots, e_{n-1}$ and $s_1, \dots, s_{n-1}$ generate $B_n(S; \delta)$ as an algebra.   We have $e_i^2 = \delta e_i$, so that $e_i$ is an essential idempotent if $\delta \ne 0$ and nilpotent otherwise.  Note that $e_i^* = e_i$ and $s_i^* = s_i$.  

The products $a b$ and $b a$  of two Brauer diagrams have  at most as many through strands as $a$.  Consequently, the span of diagrams with fewer than $n$ through strands is an ideal 
$J_n$ in $B_n(S; \delta)$.  
 The
ideal  $J_n$  is generated by $e_{n-1}$.  We have   $B_n(S; \delta)/J_n \cong S\mf S_n$, as algebras with involutions; in fact, the isomorphism is determined  by $v + J_n \mapsto v$, for $v \in \mf S_n$.

Morton and Wassermann show ~\cite{Morton-Wassermann} that 
$B_n(S; \delta)$ is a specialisation of the BMW algebra \break $W_n(S;q,z,\delta)$ at $q=1$ and $z=1$.  Consequently,  
$B_n(S; \delta)$  has a presentation by generators ${s_i}$  and $e_i$ ($1 \le i \le n-1$) and relations specializing those of the BMW algebra.
\ignore{
\begin{enumerate}
\item (Inverses) \hods ${s_i}^2 = 1$.
\item (Essential idempotent relation)\hods $e_i^2 = \delta e_i$.
\item (Braid relations) \hods ${s_i} {s_{i+1}}{s_i} = {s_{i+1}} {s_i} {s_{i+1}}$ 
and ${s_i} {s_j} ={s_j} {s_i}$ if $|i-j|  \ge 2$.
\item (Commutation relations)  \hods ${s_i} e_j = e_j {s_i}$  and
$e_i e_j = e_j e_i$  if $|i-j|\ge 2$. 
\item (Tangle relations)\hods $e_i e_{i\pm 1} e_i = e_i$, $s_i
s_{i\pm 1} e_i = e_{i\pm 1} e_i$, and $ e_i  {s_{i\pm 1}} s_i=   e_ie_{i\pm 1}$.
\item (Untwisting relations)\hods $s_i e_i = e_i {s_i} = e_i$,
and $e_i s_{i \pm 1} e_i = e_i$.
\end{enumerate}
}

\subsubsection{The Murphy basis}
The generic ground ring for the Brauer algebras is  $R_0 = \Z[\deltabold]$,  where $\deltabold$ is an indeterminant.   Write $R = \Z[\deltabold^{\pm1}]$,  and       write $B_n(R) = B_n(R; \deltabold)$. 

For $n \ge 0$ write   $H_n=R\mathfrak{S}_n$. Specialising the cellular basis for $\mathcal{H}_n(q^2)$ given in \hyperref[m-b]{Theorem~\ref*{m-b}} at $q=1$ gives a cellular basis for $H_n$.   As for the Hecke algebras,
$\widehat H_n$ is the set  $\mathcal Y_n$ of Young diagrams of size $n$, and the branching diagram for the tower
$(H_n)_{n \ge 0}$ of symmetric group algebras is Young's lattice.

 It is shown in~\cite[Sect.~5.2]{MR2794027} that the pair of towers  $(B_n(R))_{n \ge 0}$  and $(H_n)_{n \ge 0}$ satisfy the framework axioms~\eqref{axiom: involution on An}--\eqref{axiom: semisimplicity}  of \hyperref[subsection:  cellularity and jones setting and correction]{Section~\ref*{subsection:  cellularity and jones setting and correction}}.  Axiom~\eqref{axiom Hn coherent}  holds by \hyperref[corollary:  Hecke algebras strongly coherent]{Corollary~\ref*{corollary:  Hecke algebras strongly coherent}}, and specialization from the Hecke algebras to the symmetric group algebras.  Axiom~\eqref{axiom:  Delta J} hold for $B_n(R)$,  by the remarks at the end of \hyperref[subsection:  cellularity and jones setting and correction]{Section~\ref*{subsection:  cellularity and jones setting and correction}}.
 Moreover, by 
 \hyperref[corollary Hecke algebras cyclic cellular]{Corollary~\ref*{corollary Hecke algebras cyclic cellular}},  the symmetric group algebras are cyclic cellular, so Axiom~\eqref{axiom: Hn cyclic cellular}  is satisfied as well.  Therefore, by \hyperref[theorem: jones tower coherent tower cyclic cellular]{Theorem~\ref*{theorem: jones tower coherent tower cyclic cellular}}, the tower of algebras $(B_n(R))_{n \ge 0}$ is a strongly coherent tower of cyclic cellular algebras.

By the discussion in \hyperref[subsection results on cellularity from Goodman Graber]{Section~\ref*{subsection results on cellularity from Goodman Graber}}, the partially ordered set $\widehat{B}_n$ in the cell datum for $B_n$ can be realized as
\begin{align*}
\widehat{B}_n=\left\{ 
(\lambda,l)\,\big|\,  0 \le \el \le \lfloor n/2\rfloor \text{ and } 
\lambda\in\widehat{{H}}_{n-2l}
\right\}.
\end{align*}
The order relation on $\widehat B_n$, and the branching rule for the branching diagram $\widehat B$ for the tower $(B_n)_{n \ge 0}$ is exactly the same as for the BMW algebras discussed in the previous section. 

For each $n \ge 0$ for for each $\mu \in \widehat{ H}_n$, define
$
{c}_{(\mu,0)}=\sum_{v\in\mathfrak{S}_\mu} v$;
thus ${c}_{(\mu,0)}$ is a preimage in $B_n$ of $m_\mu \in  H_n$ (defined in Equation~\eqref{equation:  element m mu in Hecke algebra}).
For $n \ge 2$ and $(\mu,m)\in\widehat{B}_n$, let
$
c_{(\mu,m)}={c}_{(\mu,0)}e_{n-1}^{(m)}$,
where $e_{i-1}^{(m)}$ is defined in Equation~\eqref{notation e sub i super ell}.

 For $1 \le i \le j$ let
 \begin{equation} \label{definition of s i j}
 s_{i, j} = s_is_{i+1}\cdots s_{j-1} = (j, j-1, \dots, i),
 \end{equation}
 and let $s_{j, i} = s_{i, j}\inv$. 
 
 Let $i \ge 1$ and $\lambda\in\widehat{\mathcal{H}}_{i-1}$ and $\mu\in\widehat{\mathcal{H}}_{i}$, with $\la \to \mu$  in $\widehat{\mathcal H}$.  
If $\mu=\lambda\cup\{(r,\mu_r)\}$, let $a=\sum_{j=1}^{r}\mu_j$, and define
\begin{align}
\bar{u}_{\lambda\to\mu}^{\,(i)}=s_{i,a}
\sum_{k=0}^{\lambda_r}  s_{a,a-k}\quad \text{and} \quad
\bar{d}_{\lambda\to\mu}^{\,(i)}=s_{a,i}.
\end{align}
These are liftings in $B_i$ of the branching factors in the symmetric group algebra $ H_i$, as determined in \hyperref[corollary:  d branching factors for the Hecke algebra]{Corollary~\ref*{corollary:  d branching factors for the Hecke algebra}} and \hyperref[corollary: u branching factors for the Hecke algebra]{Corollary~\ref*{corollary: u branching factors for the Hecke algebra}}. 

\ignore{
By \hyperref[theorem:  closed form determination of the branching factors]{Theorem~\ref*{theorem:  closed form determination of the branching factors}}, the branching factors for the tower $(B_n)_{n \ge 0}$ can be chosen as follows:
If $(\lambda,l)\in\widehat{B}_i$, $(\mu,l)\in\widehat{B}_{i+1}$, and $(\lambda,l)\to(\mu,l)$ in $\widehat{B}$, define 
\begin{align*}
d_{(\lambda,l)\to(\mu,l)}^{(i+1)}= \bar{d}_{\lambda\to\mu}^{\,(i-2l+1)}e_{i-1}^{(l)}
\end{align*}
and for $(\lambda,l)\in\widehat{B}_i$, $(\mu,l+1)\in\widehat{B}_{i+1}$, such that $(\lambda,l)\to(\mu,l+1)$ in $\widehat{B}$, define 
\begin{align*} d_{(\lambda,l)\to(\mu,l+1)}^{(i+1)}=\bar{u}_{\mu\to\lambda}^{(i-2l)}e_{i-1}^{(l)}.
\end{align*}
For   $(\la, \el) \in \widehat{B}_i$  and a path
$$\mathfrak{t}=((\emptyset,0),(\lambda^{(1)},l_1),\ldots,(\lambda^{(i)},l_{i}) = (\la, \el) )\in\widehat{B}_{i}^{(\lambda,l)},$$
 let 
\begin{align*}
d_\mathfrak{t}=
d_{(\lambda^{(i-1)},l_{i-1})\to (\lambda^{(i)},l_{i})}^{(i)}
d_{(\lambda^{(i-2)},l_{i-2})\to (\lambda^{(i-1)},l_{i-1})}^{(i-1)}
\cdots d_{(\emptyset,0)\to(\lambda^{(1)},l_{1})}^{(1)}.
\end{align*}
} 

For $(\la, \el) \in \widehat B_{i}$ and $(\mu, m) \in \widehat B_{i+1}$ with $(\la, \el) \to (\mu, m)$, determine the branching factors $\dd {(\la, \el)} {(\mu, m)} {i+1}$  and  $\uu {(\la, \el)} {(\mu, m)} {i+1}$  according to the formulas of \hyperref[theorem:  closed form determination of the branching factors]{Theorem~\ref*{theorem:  closed form determination of the branching factors}};
for a path $\mft \in \widehat W_n \power {\la, \el}$,  define $d_\mft$ to be the ordered product of these branching factors along the path $\mft$, as in Equation\eqref{equation: ordered product of d coefficients}. 
From  \hyperref[label:1]{Corollary~\ref*{label:1}} \ignore{and \hyperref[theorem:  closed form determination of the branching factors]{Theorem~\ref*{theorem:  closed form determination of the branching factors}} }
we obtain:

\begin{theorem}  Let $B_n$ denote the Brauer algebra over the generic ground ring $R_0 = \Z[\deltabold]$.
 For $n \ge 0$,  
 the set
\begin{align}\label{b-m-a}
\mathscr{B}_n=
\left\{
d_\mathfrak{s}^*c_{(\lambda,l)}d_\mathfrak{t}\,\big|\,
\text{$\mathfrak{s},\mathfrak{t}\in\widehat{B}_n^{(\lambda,l)}$, $(\lambda,l)\in\widehat{B}_n$}
\right\},
\end{align}
is an $R_0$--basis for $B_n$, and $(B_n, *,\widehat{B}_n,\unrhd,\mathscr{B}_n)$ is a cell datum for $B_n$. 
\end{theorem}

\begin{proof}   Let $R = \Z[\deltabold^{\pm 1}]$.   From the preceding discussion and  \hyperref[label:1]{Corollary~\ref*{label:1}}, we have that  $\mathscr B_n$  is a cellular basis of $B_n(R; \deltabold)$.    In \hyperref[subsubsection:  invertibility of transition matrix for BMW]{Section~\ref*{subsubsection:  invertibility of transition matrix for BMW}},  we have shown that the transition matrix $B_{\Z}$  from the diagram basis of the Brauer algebra to  $\mathscr B_n$  is integer valued and invertible over $\Z$.  It follows that $\mathscr B_n$  is a basis of the Brauer algebra $B_n$ over the generic ground ring  $R_0$.  
\end{proof}

\begin{corollary}  \label{corollary:   cell filtrations for Brauer over generic ground ring}
For  $n \ge 0$ and for  $\Delta$ a cell module of  $B_{n+1}$,    the restricted module $\Res^{B_{n+1}}_{B_n}(\Delta)$ has an order preserving cell filtration.
\end{corollary}

\begin{proof}  The proof is the same as that of  \hyperref[corollary:   cell filtrations for BMW over generic ground ring]{Corollary~\ref*{corollary:   cell filtrations for BMW over generic ground ring}}.
\end{proof}

\begin{remark}
The basis~\eqref{b-m-a} coincides with the Murphy--type basis for $B_n(\delta)$ given in~\cite{MR2348099}.
\end{remark}

\subsection{Jones--Temperley--Lieb algebras} \label{subsection: temperley lieb algebras}

The Jones--Temperley--Lieb algebras were defined by Jones~\cite{MR696688}, and were used to define the Jones link invariant in~\cite{MR766964}. The cellularity of Jones--Temperley--Lieb algebras was established by Graham and Lehrer~\cite{MR1376244}. H\"arterich~\cite{MR1680384} has given Murphy bases for generalised Temperley--Lieb algebras.

Let $S$ be an integral domain and $\delta \in S$.   The Jones--Temperley--Lieb algebra $A_n={A}_n(S; \delta)$ is the unital $S$--algebra presented by the generators $e_1,\ldots,e_{n-1}$ and the relations
$e_i e_{i\pm 1}  e_i = e_i$,   $e_i e_j = e_j e_i$  if $|i-j |\ge 2$,  and $e_i^2 = \delta e_i$.
\ignore{
\begin{align*}
&e_ie_{i-1}e_i=e_i,&&i=2,\ldots,n-1,\\
&e_{i-1}e_{i}e_{i-1}=e_{i-1},&&i=2,\ldots,n-1,\\
&e_i^2=\delta e_i,&&i=1,\ldots,n-1,\\
&e_ie_j=e_je_i,&&|i-j|\ge2.
\end{align*}
}
The Jones--Temperley--Lieb algebra can also be realised as the subalgebra of the Brauer algebra, with parameter $\delta$, spanned by Brauer diagrams \emph{without crossings}. 
Because of the symmetry of the relations the assignment $e_i \mapsto e_i$  determines an involution $*$ of $A_n$.    The span of diagrams with at least one horizontal strand (that is, all diagrams other than the identity diagram)  is an ideal $J_n$;  it is the ideal generated by $e_{n-1}$.  
The map
$A_n/J_n  \rightarrow S$  determined  by $1_{A_n}+ J_n \mapsto 1_{S}$
\ignore{
\begin{align*}
{A}_n/({A}_ne_{n-1}{A}_n)&\rightarrow S\\
1_{A_n}+(e_{n-1})&\mapsto 1_{S},
\end{align*}
}
is an isomorphism of algebras with involution.

The generic ground ring for the Jones--Temperley--Lieb algebras is $R_0 = \Z[\deltabold]$,  where 
$\deltabold$ is an indeterminant over $\Z$.  Set $R = \Z[\deltabold^{\pm 1}]$.   Write $A_n(R) = A_n(R; \deltabold)$, and $H_n = R$ for $n \ge 0$.

\subsubsection{The Murphy basis}
 It is shown in~\cite[Sect.~5.3]{MR2794027} that the pair of towers  $(A_n(R))_{n \ge 0}$  and $(H_n)_{n \ge 0}$  satisfies the framework axioms~\eqref{axiom: involution on An}--\eqref{axiom: semisimplicity}  of \hyperref[subsection:  cellularity and jones setting and correction]{Section~\ref*{subsection:  cellularity and jones setting and correction}}.     Axioms~\eqref{axiom Hn coherent}  and~\eqref{axiom: Hn cyclic cellular}  are evident since $H_n = R$ for all $n$.  Axiom~\eqref{axiom:  Delta J} hold for $A_n(R)$,  by the remarks at the end of \hyperref[subsection:  cellularity and jones setting and correction]{Section~\ref*{subsection:  cellularity and jones setting and correction}}.   Therefore, by \hyperref[theorem: jones tower coherent tower cyclic cellular]{Theorem~\ref*{theorem: jones tower coherent tower cyclic cellular}}, the tower of algebras $(A_n(R))_{n \ge 0}$ is a strongly coherent tower of cyclic cellular algebras.

For each $n \ge 0$,  the partially ordered set $\widehat H_n$ in the cell datum for $H_n$ is a singleton
which we label as  $\{n\}$, and the branching diagram for the tower $(H_n)_{n \ge 0}$ is 
$\emptyset = 0 \to 1 \to 2 \to \cdots.$  
 The branching diagram $\hat A$  for the tower $(A_n)_{n\ge 0}$ is that obtained by reflections from $\widehat H$.  It can be realized as follows:   For $n \ge 0$, let
\begin{align*}
\hat{{A}}_n=
\big\{
j \ \big \vert \  0\le j\le n\text{ and } n-j\text{ is even}
\big\}
\end{align*}
and order $\hat{{A}}_n$ by writing $m\unrhd l$ if $l\ge m$ as integers. The branching diagram $\hat{{A}}$ has an edge connecting $j$ on level $n$   and   $k$  on level $n+1$ if and only if $|j - k| = 1$.  
\ignore{
\begin{enumerate}[label=(\arabic{*}), ref=\arabic{*},leftmargin=0pt,itemindent=1.5em]
\item vertices on level $i$: $\hat{A}_i$ and 
\item an edge $l\to m$ if $|l-m|=1$. 
\end{enumerate}
The first few levels of $\hat{{A}}$ are given in~\eqref{t-i-l}.
\begin{align}\label{t-i-l}
\begin{matrix}
\xymatrix{
0&\emptyset\ar[d]\\
 &1 \ar[d]\ar[dr] \\
2&0\ar[d]&2\ar[d]\ar[dl]\\
 &1\ar[d]\ar[dr]&3\ar[d]\ar[dr]\\
4&0\ar[d]&2\ar[dl]\ar[d] &4\ar[dl]\ar[d]\\
 &1 &3&5\\}
\end{matrix}
\end{align}
\ignore{
For $i,l=2,3,\ldots,$ let
\begin{align*}
e_{i-1}^{(l)}=\underbrace{e_{i-2l+1}e_{i-2l+3}\cdots e_{i-1}}_{\text{$l$ factors}}
&&\text{if $l=0,1,\ldots,\lfloor i/2\rfloor$,}
\end{align*}
and write
\begin{align*}
e_{i-1}^{(l)}=0&&\text{if $l>\lfloor i/2\rfloor$.}
\end{align*}
}
}

Evidently, the algebra $H_n = R$ has the cellular basis $\{1\}$.  We can choose the element 
$c_n$ in $H_n$  (see \hyperref[lemma: equivalent conditions for cyclic cellular algebra]{Lemma~\ref*{lemma: equivalent conditions for cyclic cellular algebra}}) to be $1$  and also all the branching factors
$d_{(n-1) \to n} \power n$ and $u_{(n-1)\to n}\power n$ to be $1$.  
According to Equation~\eqref{equation:  formula for c lambda ell},  for 
 $j\in\hat{A}_n$, 
we can take  
$$c_j=e_{n-1}^{(\el)}, \qquad \text{where }  \el=(n-j)/2,$$
and $e_{n-1}^{(\el)}$ is defined in Equation~\eqref{notation e sub i super ell}. 
By \hyperref[theorem:  closed form determination of the branching factors]{Theorem~\ref*{theorem:  closed form determination of the branching factors}}, the branching factors for the tower $(A_n)_{n \ge 0}$ can be chosen as follows:
If $j \in \hat A_i$ and $k \in \hat A_{i+1}$ with $j \to k$, we  take
\begin{align*}
d_{j\to k}^{(i+1)}=e_{i-1}^{(\el)},\qquad   \text{where }  \el=(i-j)/2.
\end{align*}
For a path $\mft \in \hat A_n \power {\la, \el}$,  define $d_\mft$ to be the ordered product of these branching factors along the path $\mft$, as in Equation\eqref{equation: ordered product of d coefficients}. 
\ignore{
For $\mathfrak{t}=(0,1,l_2,\ldots,l_{i})\in\hat{{A}}_{i}^{l_i}$, let 
\begin{align*}
d_\mathfrak{t}=
d_{l_{i-1}\to l_i}^{(i)}
d_{l_{i-2}\to l_{i-1}}^{(i-1)}
\cdots 
d_{0\to 1}^{(1)}.
\end{align*}
}
From \hyperref[label:1]{Corollary~\ref*{label:1}}  we obtain:
\begin{proposition} \label{proposition:  Murphy basis for TL over Laurent polynomial ring}
 Let $R = \Z[\deltabold^{\pm 1}]$  and let $A_n(R) = A_n(R; \deltabold)$  denote the Jones--Temperley--Lieb algebra over $R$.
For $n \ge 0$,  the set
\begin{align*}
\mathscr{A}_n=
\left\{
d_\mathfrak{s}^*c_{l}d_\mathfrak{t}\ \big|\ 
\text{$\mathfrak{s},\mathfrak{t}\in\hat{{A}}_n^{l}$\text{ and } $l\in\hat{{A}}_n$}
\right\},
\end{align*}
is an $R$--basis for ${A}_n$, and $({A}_n,*,\hat{{A}}_n,\unrhd,\mathscr{A}_n)$ is a cell datum for ${A}_n$. 
\end{proposition}
\ignore{
\begin{remark}
In the literature, it is usual to write
\begin{align*}
\hat{{A}}_i=\{\lambda\mid
\text{$\lambda$ is a partition of $i$ with at most two non--zero parts}
\}
\end{align*}
so that the branching diagram~\eqref{t-i-l} is represented as 
\begin{align*}
\begin{matrix}
\xymatrix{
0&\emptyset\ar[d]\\
 &\text{\tiny\Yvcentermath1$\yng(1)$} \ar[d]\ar[dr] \\
2&\text{\tiny\Yvcentermath1$\yng(1,1)$}\ar[d]&\text{\tiny\Yvcentermath1$\yng(2)$}\ar[d]\ar[dl]\\
 &\text{\tiny\Yvcentermath1$\yng(2,1)$}\ar[d]\ar[dr]&\text{\tiny\Yvcentermath1$\yng(3)$}\ar[d]\ar[dr]\\
4&\text{\tiny\Yvcentermath1$\yng(2,2)$}\ar[d]&\text{\tiny\Yvcentermath1$\yng(3,1)$}\ar[dl]\ar[d] &\text{\tiny\Yvcentermath1$\yng(4)$}\ar[dl]\ar[d]\\
 &\text{\tiny\Yvcentermath1$\yng(3,2)$} &\text{\tiny\Yvcentermath1$\yng(4,1)$}&\text{\tiny\Yvcentermath1$\yng(5)$}\\}
\end{matrix}
\end{align*}
\end{remark}
}

\subsubsection{The Murphy basis coincides with the diagram basis} 
\label{subsubsction:  Murphy and diagram basis coincide for TL}
Next, we will show that the Murphy type cellular basis $\mathscr A_n$ of $A_n$   given in \hyperref[proposition:  Murphy basis for TL over Laurent polynomial ring]{Proposition~\ref*{proposition:  Murphy basis for TL over Laurent polynomial ring}}   actually coincides with the diagram basis, so is in particular a basis for the Jones--Temperley--Lieb algebra over the generic ground ring $\Z[\deltabold]$.

Let $S$ be an integral domain and $\delta \in S$. 
Let $k$ and $n$ be non-negative integers of the same parity.   A {\em  $(k, n)$--Temperley--Lieb diagram} is a 
planar diagram with $k$ upper vertices and  $n$ lower vertices connected in pairs with no crossings.   The product of a $(k, n)$--TL diagram and an $(n, m)$--TL diagram is defined by the same rule as the product of two ordinary TL diagrams of the same size;  the result is a power of $\delta$ times a $(k, m)$--TL diagram.
The {\em Temperley-Lieb category}  is  category whose objects are non--negative integers;   if $n - k$ is odd, then $\Hom(k, n) = 0$,  and if $n - k$  is even then $\Hom(k, n)$  is the free $S$--module on the basis of $(k, n)$--TL diagrams.  Composition of morphisms is the bilinear extension of the  product of diagrams described above.    There is a map $*$  from $(k, n)$--TL diagrams to $(n, k)$--TL diagrams defined by reflection in a horizontal line.  The linear extension of $*$ is a contravariant functor from the TL category to itself with $*\circ* = \id$.    The {\em rank}  of a $(m, n)$--TL diagram is the number of its vertical strands.

Fix $n \ge 0$.   A TL   $n$--{\em dangle} of rank $k$ is  a $(k, n)$--TL diagram with $k$ vertical strands and $(n-k)/2$  horizontal strands.       Any $(n, n)$--TL diagram  $T$ of rank $k$  can be written uniquely as $T = y^* x$,  where  $x$ and $y$ are  $n$--dangles of rank $k$. 
A {\em Dyck sequence}  of length $n$ and rank $k$  is a sequence $(a_1, \dots, a_n)$  such that $a_i \in \{\pm 1\}$,   each partial sum $\sum_{i = 1}^j a_i$ is non--negative,  and $\sum_{i = 1}^n a_i = k$.   
There is a bijection between Dyck sequences of length $n$ and rank $k$,   and $n$--dangles of rank $k$, given as follows.     Given a Dyck sequence $(a_i)$ of length $n$ and rank $k$,  there is a unique $n$--dangle  $x$ of rank $k$ with the following property:   a  vertex $\pbar j$  is the right endpoint of a horizontal strand of $x$  if and only if  $a_j = -1$. 
    Conversely,  given an $n$--dangle $x$ of rank $k$,  label the right endpoint of each horizontal strand with $-1$   and all other bottom vertices with $+1$.  Then the resulting sequence  of labels in $\{\pm 1\}$,  read from left to right, is a Dyck sequence of rank $k$.    The two maps, from Dyck sequences to dangles and from dangles to Dyck sequences, are inverses.

There is a bijection between paths on the generic branching diagram for the Temperley--Lieb algebras,  of length $n$, from $\emptyset$  to $k$,   
 and Dyck sequences of length $n$ and rank $k$.   A path is given by a sequence $(0 = b_0,  1= b_1, b_2,  \dots,   k = b_n)$  with $b_j - b_{j-1} = \pm 1$ for each $j$.    Then  the sequence $(b_i - b_{i -1})_{i = 1}^n$  is a Dyck sequence of length $n$ and rank $k$.   Conversely, given a Dyck sequence of length $n$ and rank $k$,  its sequence of partial sums defines a path on the branching diagram,  of length $n$, from $\emptyset$  to $k$.
    Evidently, the two maps,  from paths to Dyck sequences and from Dyck sequence to paths, are inverses.

Composing the two bijections described above, we have a bijection between paths on the branching diagram and dangles.
For a path $\mft$ on the branching diagram, let $x(\mft)$ denote the corresponding dangle.

\ignore{
Let $R = \Z[\deltabold]$,  where $\deltabold$ is an indeterminant over $\Z$.  Let $A_n$ denote the $n$--th Temperley--Lieb algebra over $R$.  
We have defined elements   $d_\mathfrak{s}^*c_{k}d_\mathfrak{t}$   in $A_n$,  where $k \in \hat A_n$ and $\mfs, \mft$ are in $\hat A_n^k$,  namely paths of length $n$ on the generic branching diagram for the TL algebras from from $\emptyset$ to  $k $.  
} 

\begin{theorem}   Fix $n$ and $k \le n$ with $n - k$ even.    Let $\mfs$ and $\mft$ be elements of $\hat A_n^k$.  Then

 $$d_\mathfrak{s}^*c_{k}d_\mathfrak{t}  = x(\mfs)^*  x(\mft).
 $$
Thus  the Murphy type basis
\begin{align*}
\mathscr{A}_n=
\left\{
d_\mathfrak{s}^*c_{k}d_\mathfrak{t}\ \big|\ 
\text{$\mathfrak{s},\mathfrak{t}\in\hat{{A}}_n^{k}$\text{ and } $k\in\hat{{A}}_n$}
\right\},
\end{align*}
is just the set of all Temperley--Lieb diagrams on $2n$ vertices,  and in particular is a cellular basis of the Jones--Temperley--Lieb algebra $A_n$  over the generic ground ring $\Z[\deltabold]$.   
\end{theorem}

\begin{proof}
Recall that $c_k =  e_{n-1}\power {\el}$,  where $\el = (n - k)/2$.       Let $x_{n-1}\power \el$ be the bottom half of $ e_{n-1}\power \el$,  namely the $n$-dangle of rank $k$  with horizontal strands 
connecting the adjacent pairs of vertices 
$$ 
(\pbar{k+ 1}, \pbar{k +2}),  \dots,  (\pbar{n-1}, \pbar n)  \qquad  \text{($\el$ strands)}.  
$$
Thus  $ e_{n-1}\power \el = ( x_{n-1}\power \el)^*    x_{n-1}\power \el$.    To prove the proposition it suffices to show that
\begin{equation}  \label{equation:  TL diagram basis and path basis 1}
 x_{n-1}\power \el d_\mft  = x(\mft).
\end{equation}
We do this by induction on $n$,  the case $n = 1$ being evident.    Assume that the assertion holds for some fixed $n$,  for all $k$  with $k \le n$ and $n-k$ even, and for all $\mft \in \hat A_n^k$.    Let $\mfs \in \hat A_{n+1}^j$ for some $j$, 
$$
\mfs = (k_0, k_1, \dots,  k_n = k,    k_{n+1} = j),
$$
and let $\mft$ be the truncation of $\mfs$ of length $n$,
$$
\mft = (k_0, k_1, \dots,  k_n = k ).
$$
Write $\el = (n-k)/2$  and $\el' = (n+1 - j)/2$.    There are two cases:

\begin{figure}
\begin{tikzpicture}
\foreach \x in {6.4,8.0,9.6,11.2} 
	\foreach \y in {2.3}
		\draw (\x,\y) .. controls (\x+0.2,\y+0.6) and (\x+0.6,\y+0.6) .. (\x+0.8,\y);
\foreach \x in {5.6, 7.2, 8.8, 10.4} 
	\foreach \y in {1.8}
	\draw (\x,\y) .. controls (\x+0.2,\y-0.6) and (\x+0.6,\y-0.6) .. (\x+0.8,\y);
\foreach \x in {4.0,6.4,8.8,10.4} 
	\foreach \y in {0.5} 
	\draw (\x,0.5) .. controls (\x+0.2,\y+0.6) and (\x+0.6,\y+0.6) .. (\x+0.8,\y);
\foreach \x in {5.6,6.4,...,11.2} 
	\filldraw [black] (\x,1.8) circle (1.1pt);
\foreach \x in {3.2,4.0,4.8,5.6,...,12.8} 
\foreach \y in {2.3}	
	\filldraw [black] (\x,\y) circle (1.1pt);
\foreach \x in {3.2,4.0,4.8,5.6,...,12.8} 
\foreach \y in {1.8}	
	\filldraw [black] (\x,\y) circle (1.1pt);
\foreach \x in {3.2,4.0,4.8,5.6,...,12.8} 
\foreach \y in {0.5}	
	\filldraw [black] (\x,\y) circle (1.1pt);
\foreach \x in {3.2,4.0,4.8,5.6,...,12.8} 
	\foreach \y in {1.8} 
 	\draw[dotted] (\x,\y) -- (\x,\y+0.5);
\foreach \x in {3.2,12}
	\draw (\x,0.5) -- (\x,1.8);
\foreach \x in {3.2,4.0,...,5.6}
	\draw (\x,2.3) -- (\x,2.6);
\foreach \x in {3.2,4.0,...,5.6}
 \draw[dashed] (\x,2.5) -- (\x,2.8);
\draw (4.0,1.8) -- (5.6,0.5);
\draw (4.8,1.8) .. controls (5.6,0.8) and (7.2,1.5) .. (8.0,0.5);
\end{tikzpicture}
\caption{}
\label{figure:  inductive step for TL dangle argument 1}
  \end{figure}

{\em Case 1.} \quad  $j = k+1$,  $\el' = \el$.   In this case,   $x(s)$  is obtained from $x(t)$  by adding a vertical strand at the new vertex $\pbar {n+1}$.  On the other hand,
$$
\begin{aligned}
x_{n}\power{\el'}  d_\mfs  &=    x_n\power \el  d_{k \to k+1}\power{n+1}  d_t \\
&=  x_n\power \el  e_{n-1}\power \el  d_t   \\
&=     x_n\power \el  (x_{n-1}\power \el)^*   x_{n-1}\power \el   d_t   \\
&=   x_n\power \el  (x_{n-1}\power \el)^*   x(\mft),
\end{aligned}
$$
using the induction hypothesis at the last step.    Multiplication of an $n$--dangle of rank $k$  on the left by 
$ x_n\power \el  (x_{n-1}\power \el)^* $   adds a vertical strand on the right, as shown in \hyperref[figure:  inductive step for TL dangle argument 1]{Figure~\ref*{figure:  inductive step for TL dangle argument 1}}.
 Hence we have $x_{n}\power{\el'}  d_\mfs  = x(\mfs)$.

\begin{figure}[ht]
\begin{tikzpicture}
\foreach \x in {4.8,6.4,8.0,9.6,11.2} 
	\foreach \y in {2.3}
		\draw (\x,\y) .. controls (\x+0.2,\y+0.6) and (\x+0.6,\y+0.6) .. (\x+0.8,\y);
\foreach \x in {5.6, 7.2, 8.8, 10.4} 
	\foreach \y in {1.8}
	\draw (\x,\y) .. controls (\x+0.2,\y-0.6) and (\x+0.6,\y-0.6) .. (\x+0.8,\y);
\foreach \x in {4.0,6.4,8.8,10.4} 
	\foreach \y in {0.5} 
	\draw (\x,0.5) .. controls (\x+0.2,\y+0.6) and (\x+0.6,\y+0.6) .. (\x+0.8,\y);
\foreach \x in {5.6,6.4,...,11.2} 
	\filldraw [black] (\x,1.8) circle (1.1pt);
\foreach \x in {3.2,4.0,4.8,5.6,...,12.8} 
\foreach \y in {2.3}	
	\filldraw [black] (\x,\y) circle (1.1pt);
\foreach \x in {3.2,4.0,4.8,5.6,...,12.8} 
\foreach \y in {1.8}	
	\filldraw [black] (\x,\y) circle (1.1pt);
\foreach \x in {3.2,4.0,4.8,5.6,...,12.8} 
\foreach \y in {0.5}	
	\filldraw [black] (\x,\y) circle (1.1pt);
\foreach \x in {3.2,4.0,4.8,5.6,...,12.8} 
	\foreach \y in {1.8} 
 	\draw[dotted] (\x,\y) -- (\x,\y+0.5);
\foreach \x in {3.2,12}
	\draw (\x,0.5) -- (\x,1.8);
\foreach \x in {3.2,4.0}
	\draw (\x,2.3) -- (\x,2.6);
\foreach \x in {3.2,4.0}
 \draw[dashed] (\x,2.5) -- (\x,2.8);
\draw (4.0,1.8) -- (5.6,0.5);
\draw (4.8,1.8) .. controls (5.6,0.8) and (7.2,1.5) .. (8.0,0.5);
\end{tikzpicture}
\caption{}
\label{figure:  inductive step for TL dangle argument 2}
\end{figure}

  {\em Case 2.} \quad  $j = k- 1$,  $\el' = \el+ 1$.   In this case,   $x(s)$  is obtained from $x(t)$  by ``closing'' the rightmost vertical strand;  that is,  if $\pbar j$ is the vertex adjacent to this strand, the strand is replaced by
 a horizontal strand joining $\pbar j$ and $\pbar {n+1}$.    On the other  hand,
 $$
\begin{aligned}
x_{n}\power{\el'}  d_\mfs  &=    x_n\power {\el+1}  d_{k \to k+1}\power{n+1}  d_t \\
&=   x_n\power {\el+1}  (x_{n-1}\power \el)^*   x(\mft),
\end{aligned}
$$
by the same computation as in the previous case.  But multiplication of an $n$--dangle of rank $k$ 
 on the left by 
$ x_n\power {\el+1} (x_{n-1}\power \el)^* $    closes the rightmost vertical strand, as shown in \hyperref[figure:  inductive step for TL dangle argument 2]{Figure~\ref*{figure:  inductive step for TL dangle argument 2}}.  So again we have $x_{n}\power{\el'}  d_\mfs  = x(\mfs)$, and this completes the inductive proof.
  \end{proof}
  
\begin{corollary} \label{corollary:  cell filtrations for TL over Z[delta]}
Let $A_n$ denote the Jones--Temperley--Lieb algebra over the generic ground ring $\Z[\deltabold]$.  
For  $n \ge 0$ and for  $\Delta$ a cell module of  $A_{n+1}$,    the restricted module $\Res^{A_{n+1}}_{A_n}(\Delta)$ has an order preserving cell filtration.
\end{corollary}

\begin{proof}  One can either use the same proof as for 
 \hyperref[corollary:   cell filtrations for BMW over generic ground ring]{Corollary~\ref*{corollary:   cell filtrations for BMW over generic ground ring}}, or one can check directly using a diagrammatic model of the cell modules that $\Res^{A_{n+1}}_{A_n}(\cell A {n+1} k)$ has a filtration
 $$
 0 \subseteq N \subseteq \Res^{A_{n+1}}_{A_n}(\cell A {n+1} k),
 $$
 with $N \cong  \cell A n {k-1}$  and $\Res^{A_{n+1}}_{A_n}(\cell A {n+1} k)/N \cong \cell A n {k+1}$. 
\end{proof}

\subsection{Partition algebras}\label{p-m-s}
The partition algebras $A_n(k)$, for $k,n\in\mathbb{Z}_{\ge0},$ are a family of algebras defined in the work of Martin and Jones in ~\cite{MR1768036, MR1103994, MR1265453, MR1317365} in connection with the Potts model and higher dimensional statistical mechanics.  Martin~\cite{MR1317365} showed that the even partition algebra $A_{2n}(k)$ is in Schur--Weyl duality with the symmetric group $\mathfrak{S}_k$ acting diagonally on the $n$--fold tensor product $V^{\otimes n}$ of its $k$--dimensional permutation representation $V$. In~\cite{MR1768036}, Martin defined the odd partition algebra $A_{2n+1}(k)$ as the centraliser of the subgroup $\mathfrak{S}_{k-1}\subseteq \mathfrak{S}_k$ acting on $V^{\otimes n}$. Including the algebras $A_{2n+1}(k)$ in the tower 
\begin{align}\label{tower}
A_0(k)\subseteq A_1(k)\subseteq A_{2}(k)\subseteq A_{3}(k)\subseteq\cdots 
\end{align}
allowed for the simultaneous analysis of the whole tower of algebras~\eqref{tower} using the Jones basic construction, by Martin~\cite{MR1768036} and Halverson and Ram~\cite{MR2143201}.

For $n\in\mathbb{Z}_{\ge0}$ let 
\begin{align*}
{P}_{2n}&=\left\{\text{set partitions of $\{\p 1,\p 2,\dots,\p n,\pbar 1,\pbar 2,\dots,\pbar n\}$}\right\},\qquad\text{and,}\\
{P}_{2n-1}&=\left\{d\in P_{2n}\ \big|\ \text{$\p n$ and $\pbar n$ are in the same block of $d$}\right\}.
\end{align*}
Any element $\rho\in{P}_{2n}$ may be represented as a graph with $n$ vertices in the top row, labelled from left to right, by $\p 1,\p 2,\dots,\p n$ and $n$ vertices in the bottom row, labelled, from left to right by $\pbar 1,\pbar 2,\dots,\pbar n$, with the connected components of the graph being the blocks of $\rho$. The representation of a partition by a diagram is not unique; for example the partition 
\begin{align*}
\rho=\left\{\{\p 1,\pbar 1,\p 3,\pbar 4,\pbar 5,\p 6\},\{\p 2,\pbar 2,\pbar 3,\p 4,\p 5,\pbar 6\}\right\}
\end{align*}

can be represented by the diagrams:
\begin{align*}
\begin{matrix}
\begin{tikzpicture}
\node at (-7.1,-0.45) {$\displaystyle \rho= \color{black}$};
\filldraw [black] (-6.6,0) circle (1.2pt);
\filldraw [black] (-6.6,-0.8) circle (1.2pt);
\filldraw [black] (-5.8,0) circle (1.2pt);
\filldraw [black] (-5.8,-0.8) circle (1.2pt);
\filldraw [black] (-5.0,0) circle (1.2pt);
\filldraw [black] (-5.0,-0.8) circle (1.2pt);
\filldraw [black] (-4.2,0) circle (1.2pt);
\filldraw [black] (-4.2,-0.8) circle (1.2pt);
\filldraw [black] (-3.4,0) circle (1.2pt);
\filldraw [black] (-3.4,-0.8) circle (1.2pt);
\filldraw [black] (-2.6,-0.0) circle (1.2pt);
\filldraw [black] (-2.6,-0.8) circle (1.2pt);
\draw (-6.6,0) -- (-6.6,-0.8);
\draw (-5.8,0) -- (-5.8,-0.8);
\draw (-5.8,-0.8) -- (-5.0,-0.8);
\draw (-6.6,0) .. controls (-5.8,-0.25) .. (-5.0,0.0);
\draw (-5.0,-0.0) -- (-4.2,-0.8);
\draw (-5.0,-0.8) -- (-4.2,-0.0);
\draw (-3.4,-0.0) -- (-4.2,-0.0);
\draw (-3.4,-0.8) -- (-4.2,-0.8);
\draw (-2.6,-0.8) -- (-3.4,-0.0);
\draw (-2.6,-0.0) -- (-3.4,-0.8);
\node at (-1.5,-0.40) {$\displaystyle \text{and} \color{black}$};
\node at (0.1,-0.45) {$\displaystyle \rho= \color{black}$};
\filldraw [black] (0.6,0) circle (1.2pt);
\filldraw [black] (0.6,-0.8) circle (1.2pt);
\filldraw [black] (1.4,0) circle (1.2pt);
\filldraw [black] (1.4,-0.8) circle (1.2pt);
\filldraw [black] (2.2,0) circle (1.2pt);
\filldraw [black] (2.2,-0.8) circle (1.2pt);
\filldraw [black] (3.0,0.0) circle (1.2pt);
\filldraw [black] (3.0,-0.8) circle (1.2pt);
\filldraw [black] (3.8,0.0) circle (1.2pt);
\filldraw [black] (3.8,-0.8) circle (1.2pt);
\filldraw [black] (4.6,0) circle (1.2pt);
\filldraw [black] (4.6,-0.8) circle (1.2pt);
\draw (0.6,0) -- (0.6,-0.8);
\draw (0.6,-0.8) -- (2.2,-0.0);
\draw (1.4,0) -- (1.4,-0.8);
\draw (1.4,0) -- (2.2,-0.8);
\draw (1.4,0) .. controls (2.2,-0.2) .. (3.0,-0.0);
\draw (2.2,-0.0) -- (3.0,-0.8);
\draw (3.0,-0.8) -- (3.8,-0.8);
\draw (3.0,-0.8) -- (4.6,-0.0);
\draw (3.8,-0.8) -- (4.6,-0.0);
\draw (3.0,-0.0) -- (4.6,-0.8);
\draw (3.8,-0.0) -- (4.6,-0.8);
\end{tikzpicture}
\end{matrix}
\end{align*}
If $\rho_1,\rho_2\in{P}_{2n}$, then the composition $\rho_1\circ\rho_2$ is the partition obtained by placing $\rho_1$ above $\rho_2$ and identifying each vertex in the bottom row of $\rho_1$ with the corresponding vertex in the top row of $\rho_2$ and deleting any components of the resulting diagram which contains only elements from the middle row. 
\begin{definition}
Let $S$ be a commutative unital ring and $\delta\in S$.   For $n \ge 1$,  the {\em partition algebra}  $A_{2n}(S;\delta)$ is the free $S$--module with basis $P_{2n}$, equipped with the product $\rho_1\rho_2=\delta^{\el}\rho_1\circ\rho_2$, for $\rho_1,\rho_2\in{P}_{2n}$,  
where $\el$ is the number of blocks removed from the middle row in constructing the composition $\rho_1\circ\rho_2$.  By convention , $A_0(S; \delta) = S$.  
Let $A_{2n-1}(S; \delta)$ denote the subalgebra of $A_{2n}(S; \delta)$ spanned by $P_{2n-1}$.
\end{definition}

\ignore{
The identity of  $A_{2n}(\delta)$ is the diagram
\begin{align*}
\begin{matrix}
\begin{tikzpicture}
\node at (-7.4,-0.45) {$\displaystyle 1= \color{black}$};
\node at (-5.35,-0.45) {$\displaystyle \cdots \color{black}$};
\draw (-6.8,0) -- (-6.8,-0.8);
\draw (-6,0) -- (-6,-0.8);
\draw (-4.7,0) -- (-4.7,-0.8);
\draw (-3.9,0) -- (-3.9,-0.8);
\filldraw [black] (-6.8,0) circle (1.2pt);
\filldraw [black] (-6.8,-0.8) circle (1.2pt);
\filldraw [black] (-6.0,0) circle (1.2pt);
\filldraw [black] (-6.0,-0.8) circle (1.2pt);
\filldraw [black] (-4.7,0) circle (1.2pt);
\filldraw [black] (-4.7,-0.8) circle (1.2pt);
\filldraw [black] (-3.9,0) circle (1.2pt);
\filldraw [black] (-3.9,-0.8) circle (1.2pt);
\end{tikzpicture}
\end{matrix}
\end{align*}
}

The Brauer algebra
$B_n(S; \delta)$ imbeds as a subalgebra of $A_{2n}(S;\delta)$, spanned by partitions with each block having two elements.  In particular, 
$A_{2n}(S;\delta)$ has a subalgebra isomorphic to the symmetric group algebra $S \mathfrak S_n$, spanned by permutation diagrams.   The permutation subalgebra is generated by the transpositions
\begin{align*}
\begin{matrix}
\begin{tikzpicture}
\node at (-7.4,-0.5) {$\displaystyle {s_i}= \color{black}$};
\node at (-6.3,-0.45) {$\displaystyle \cdots \color{black}$};
\draw (-6.8,0) -- (-6.8,-0.8);
\draw (-5.8,0) -- (-5.8,-0.8);
\draw (-5,0) -- (-4.2,-0.8);
\draw (-5,-0.8) -- (-4.2,-0.0);
\draw (-3.4,0) -- (-3.4,-0.8);
\draw (-2.4,0) -- (-2.4,-0.8);
\filldraw [black] (-6.8,0) circle (1.2pt);
\filldraw [black] (-6.8,-0.8) circle (1.2pt);
\filldraw [black] (-5.8,0) circle (1.2pt);
\filldraw [black] (-5.8,-0.8) circle (1.2pt);
\filldraw [black] (-5.0,0) circle (1.2pt);
\filldraw [black] (-5.0,-0.8) circle (1.2pt);
\filldraw [black] (-4.2,0) circle (1.2pt);
\filldraw [black] (-4.2,-0.8) circle (1.2pt);
\filldraw [black] (-3.4,0) circle (1.2pt);
\filldraw [black] (-3.4,-0.8) circle (1.2pt);
\filldraw [black] (-2.4,0) circle (1.2pt);
\filldraw [black] (-2.4,-0.8) circle (1.2pt);
\node at (-2.9,-0.45) {$\displaystyle \cdots \color{black}$};
\node at (-5.0,-1.1) {$\displaystyle i \color{black}$};
\node at (-4.1,-1.1) {$\displaystyle i+1 \color{black}$};
\end{tikzpicture}
\end{matrix}\ .
\end{align*}
The multiplicative identity of $A_{2n}(S; \delta)$ is the trivial permutation.   
It is not hard to see that the partition algebra $A_{2n}(S; \delta)$ is generated by the transpositions $s_i$  ($1 \le i \le n-1$) and elements $e_j$  ($1 \le j \le 2n -1$),  where
\begin{align*}
\begin{matrix}
\begin{tikzpicture}
\node at (-7.6,-0.45) {$\displaystyle e_{2i-1}= \color{black}$};
\node at (-6.3,-0.45) {$\displaystyle \cdots \color{black}$};
\draw (-6.8,0) -- (-6.8,-0.8);
\draw (-5.8,0) -- (-5.8,-0.8);
\draw (-4.2,0) -- (-4.2,-0.8);
\draw (-3.2,0) -- (-3.2,-0.8);
\filldraw [black] (-6.8,0) circle (1.2pt);
\filldraw [black] (-6.8,-0.8) circle (1.2pt);
\filldraw [black] (-5.8,0) circle (1.2pt);
\filldraw [black] (-5.8,-0.8) circle (1.2pt);
\filldraw [black] (-5.0,0) circle (1.2pt);
\filldraw [black] (-5.0,-0.8) circle (1.2pt);
\filldraw [black] (-4.2,0) circle (1.2pt);
\filldraw [black] (-4.2,-0.8) circle (1.2pt);
\filldraw [black] (-3.2,0) circle (1.2pt);
\filldraw [black] (-3.2,-0.8) circle (1.2pt);
\node at (-3.7,-0.45) {$\displaystyle \cdots \color{black}$};
\node at (-6.8,-1.1) {$\displaystyle 1 \color{black}$};
\node at (-5.0,-1.1) {$\displaystyle i \color{black}$};
\node at (-3.2,-1.1) {$\displaystyle n \color{black}$};
\node at (-1.9,-0.45) {$\displaystyle \text{and} \color{black}$};
\node at (-0.0,-0.45) {$\displaystyle e_{2i}= \color{black}$};
\node at (1.1,-0.45) {$\displaystyle \cdots \color{black}$};
\draw (0.6,0) -- (0.6,-0.8);
\draw (1.6,0) -- (1.6,-0.8);
\draw (2.4,0) -- (3.2,0);
\draw (2.4,0) -- (2.4,-0.8);
\draw (2.4,-0.8) -- (3.2,-0.8);
\draw (3.2,0) -- (3.2,-0.8);
\draw (4.0,0) -- (4.0,-0.8);
\draw (5.0,0) -- (5.0,-0.8);
\filldraw [black] (0.6,0) circle (1.2pt);
\filldraw [black] (0.6,-0.8) circle (1.2pt);
\filldraw [black] (1.6,0) circle (1.2pt);
\filldraw [black] (1.6,-0.8) circle (1.2pt);
\filldraw [black] (2.4,0) circle (1.2pt);
\filldraw [black] (2.4,-0.8) circle (1.2pt);
\filldraw [black] (3.2,0) circle (1.2pt);
\filldraw [black] (3.2,-0.8) circle (1.2pt);
\filldraw [black] (4,0) circle (1.2pt);
\filldraw [black] (4,-0.8) circle (1.2pt);
\filldraw [black] (5,0) circle (1.2pt);
\filldraw [black] (5,-0.8) circle (1.2pt);
\node at (4.5,-0.45) {$\displaystyle \cdots \color{black}$};
\node at (0.6,-1.1) {$\displaystyle 1 \color{black}$};
\node at (2.4,-1.1) {$\displaystyle i \color{black}$};
\node at (3.3,-1.1) {$\displaystyle i+1 \color{black}$};
\node at (5,-1.1) {$\displaystyle n \color{black}$};
\end{tikzpicture}
\end{matrix}\ ,
\end{align*}
Halverson and Ram~\cite[Theorem~1.11]{MR2143201} and East~\cite[Theorem~36]{MR2811310} give a presentation for $A_{2n}$ in terms of the generators $e_j$ and $s_i$.  
 The algebras $A_{2n}(S;\delta)$ and $A_{2n-1}(\delta)$  have an algebra involution $*$ which acts on diagrams by flipping them over the horizontal line $y = 1/2$.    The generators $s_i$ and $e_j$ are $*$--invarariant.
 
 $A_{2n -1}(S; \delta)$ is defined as a subalgebra of $A_{2n}(S; \delta)$, and $A_{2n}(S; \delta)$ imbeds in $A_{2n + 1}(S; \delta)$ as follows:  define a map $\iota: P_{2n} \to P_{2n+1}$  by adding an additional block $\{\p{n+1}, \pbar{n+1}\}$.    The linear extension of $\iota$ is a monomorphism of algebras with involution.  

\def\pn{{\rm pn}}
Let $d \in P_{2n}$.  Call a block of $d$  a {\em through block}  if the block has non--empty intersection with both
$[\p n ]$  and $[\pbar n]$.    The number of through blocks of $d$  is called the propagating number
of $d$, denoted ${\rm pn}(d)$.   Clearly, $\pn(d) \le n$ for all $d \in P_{2n}$.   The only $d \in P_{2n}$  with propagating number equal to $n$  are the  permutation diagrams.
If $x, y \in P_{2n}$  and $x y = \delta^r z$,  then
${\rm pn}(z)  \le \min\{ {\rm pn}(x), {\rm pn}(y)\}$.   Hence the span of the set of $d \in P_{2n}$  with
$\pn(d) < n$ is an ideal $J_{2n} \subset A_{2n}(S; \delta)$.   Moreover,  $J_{2n -1}  := J_{2n}  \cap A_{2n -1}$  is the span
of $d \in P_{2n-1}$  with $\pn(d) < n$.      One can check that for $k \ge 2$,  $J_k$ is the ideal of $A_k(S; \delta)$  generated by $e_{k -1}$.      The idea $J_k$  is $*$--invariant, and the span of permutation diagrams in $A_k$  is a $*$--invariant linear complement for $J_k$.  It follows that  $A_{2n}(S; \delta)/J_{2n} \cong S \mathfrak S_{n}$  and $A_{2n-1}(S; \delta)/J_{2n-1} \cong  S \mathfrak S_{n-1}$  as algebras with involution;   the isomorphisms are determined by $v  + J_k \mapsto v$,  where $v$ is a permutation diagram.

\ignore{
If $v\in\mathfrak{S}_n$ and $v=s_{i_1}\cdots s_{i_j}$ is a reduced expression, then $t_v=t_{s_{i_1}}\cdots t_{s_{i_j}}$ is well defined. For $i,j,=1,\ldots,n,$ let
\begin{align*}
t_{i,j}=
\begin{cases}
t_{s_i}t_{s_{i+1}}\cdots t_{s_{j-1}},&\text{if $i\le j$,}\\
t_{s_{i-1}}t_{s_{i-2}}\cdots t_{s_j},&\text{if $i>j$.}
\end{cases}
\end{align*}
}

\subsubsection{The Murphy basis}
The generic ground ring for the partition algebras is $R_0 = \Z[\deltabold]$, where $\deltabold$ is an indeterminant. Write $R = \Z[\deltabold^{\pm 1}]$, and let 
 Let $F = \Q(\deltabold)$ denote the field of fractions of $R$.  
 Write $A_n$  for $A_n(R; \deltabold)$ and write   
 $H_{2i} = H_{2i + 1} = R\mathfrak S_i$ for $i \ge 0$.   
 The tower $(H_n)_{n \ge 0}$ is a strongly coherent tower of cyclic cellular algebras, and  $H_n^F$ is split semisimple.  
The branching diagram of the tower  $(H_n)_{n \ge 0}$  is the graph $\widehat{H}$ with 
\begin{enumerate}[label=(\arabic{*}), ref=\arabic{*},leftmargin=0pt,itemindent=1.5em]
\item  $\widehat H_{2i} = \widehat H_{2i + 1} = $  the set  $\mathcal Y_i$ of Young diagrams of size $i$.
\item an edge $\lambda\to\mu$ in $\widehat{H}$ if
\begin{enumerate}
\item $\lambda\in\widehat{H}_{2i-1}$, $\mu\in\widehat{H}_{2i}$ and $\lambda\subseteq\mu$, or 
\item $\lambda\in\widehat{H}_{2i}$, $\mu\in\widehat{H}_{2i+1}$ and $\lambda=\mu$. 
\end{enumerate}
\end{enumerate}

\ignore{
The first few levels of $\widehat{H}$ are given in~\eqref{i-h-s}.
\begin{align}\label{i-h-s}
\begin{matrix}
\xymatrix{
0& & &\emptyset\ar[d] \\
 & & &\emptyset\ar[d] \\ 
2& & &\text{\tiny\Yvcentermath1$\yng(1)$}\ar[d]\\
 & & &\text{\tiny\Yvcentermath1$\yng(1)$}\ar[dl]\ar[dr]\\
4& &\text{\tiny\Yvcentermath1$\yng(2)$}\ar[d]& &\text{\tiny\Yvcentermath1$\yng(1,1)$}\ar[d]\\
& &\text{\tiny\Yvcentermath1$\yng(2)$}\ar[dl]\ar[dr]& &\text{\tiny\Yvcentermath1$\yng(1,1)$}\ar[dr]\ar[dl]\\ 
6&\text{\tiny\Yvcentermath1$\yng(3)$}& &\text{\tiny\Yvcentermath1$\yng(2,1)$} &&\text{\tiny\Yvcentermath1$\yng(1,1,1)$}
}
\end{matrix}
\end{align}
}

\ignore{
Note that the generators $e_j$ and $t_j$ of the partition algebras are $*$--invariant, 
\begin{align*}
t_{s_j}^*=t_{s_j},\qquad  e_j^*=e_j\quad \text{ for all } j.
\end{align*}
}

\ignore{ 
The maps
\begin{align*}
A_{2i}/(A_{2i}e_{2i-1}A_{2i})&\to H_{2i}, &&\text{and} && & A_{2i+1}/(A_{2i+1}e_{2i}A_{2i+1})&\to H_{2i + 1},\\
t_v+(e_{2i-1})&\mapsto v, &&
&& &t_v+(e_{2i})&\mapsto v, &&\text{for $v\in\mathfrak{S}_i$,}
\end{align*}
are algebra isomorphisms. 
}

It is shown in~\cite[Sect.~5.7]{MR2794027}  that the pair of towers $(A_n)_{n \ge 0}$   and $(H_n)_{n \ge 0}$   satisfy the framework axioms~\eqref{axiom: involution on An}--\eqref{axiom: semisimplicity}  of \hyperref[subsection:  cellularity and jones setting and correction]{Section~\ref*{subsection:  cellularity and jones setting and correction}}.  
Axiom~\eqref{axiom Hn coherent}  holds by \hyperref[corollary:  Hecke algebras strongly coherent]{Corollary~\ref*{corollary:  Hecke algebras strongly coherent}}.   Axiom~\eqref{axiom:  Delta J} hold for the partition algebras,  by the remarks at the end of \hyperref[subsection:  cellularity and jones setting and correction]{Section~\ref*{subsection:  cellularity and jones setting and correction}}.  Finally, Axiom~\eqref{axiom: Hn cyclic cellular}  holds by \hyperref[corollary Hecke algebras cyclic cellular]{Corollary~\ref*{corollary Hecke algebras cyclic cellular}}.  Therefore, by \hyperref[theorem: jones tower coherent tower cyclic cellular]{Theorem~\ref*{theorem: jones tower coherent tower cyclic cellular}}, the tower of algebras  $(A_n)_{n \ge 0}$ 
is a strongly coherent tower of cyclic cellular algebras.

By the discussion in \hyperref[subsection results on cellularity from Goodman Graber]{Section~\ref*{subsection results on cellularity from Goodman Graber}},  the partially ordered set $\hat A_i$ in the cell datum for $A_i$ can be realized as
\begin{align*}
\hat{A}_i=\left\{ 
(\lambda,l)\ \big|\ 
\text{$\lambda\in\widehat{H}_{i-2l}$, for $l=0,1,\ldots,\lfloor i/2\rfloor$}
\right\}
\end{align*}
ordered by  $(\lambda,l)\unrhd(\mu,m)$  if $l>m$, or if $l=m$ and $\lambda\unrhd\mu$ as elements of $\widehat{H}_{i-2l}$.   The branching diagram $\hat A$ for the tower $(A_n)_{n \ge 0}$ is that obtained by reflections from the branching diagram $\widehat H$.  Thus the branching rule is the following:
\begin{enumerate}
\item 
Let $i$ be even and $(\la, \el) \in \hat A_i$.  
\begin{enumerate}
\item For $(\mu, \el) \in \hat A_{i+1}$,  
$(\la, \el) \to (\mu, \el)$ in $\hat A$ if and only if $\la = \mu$.  
\item For 
$(\mu, \el + 1) \in \hat A_{i+1}$,  
$(\la, \el) \to (\mu, \el+ 1)$ in $\hat A$ if and only $\mu \subset \la$. 
\end{enumerate}  
\item Let $i$ be odd and $(\la, \el) \in \hat A_i$.
\begin{enumerate}
\item For $(\mu, \el) \in \hat A_{i+1}$,  
$(\la, \el) \to (\mu, \el)$ in $\hat A$ if and only if $\la  \subset \mu$.  
\item For 
$(\mu, \el + 1) \in \hat A_{i+1}$,  
$(\la, \el) \to (\mu, \el+ 1)$ in $\hat A$ if and only $\la = \mu$. 
\end{enumerate} 
\end{enumerate}
The first few levels of $\hat{A}$ are given in  \hyperref[figure: branching diagram for partition algebras]{Figure~\ref*{figure: branching diagram for partition algebras}}.

  \begin{figure}
  \begin{align*}
\begin{matrix}
\xymatrix{
0&\emptyset\ar[d]        & & & & & &\\
&\emptyset \ar[d]\ar[dr] & & & & & &\\
2&\emptyset\ar[d]&\text{\tiny\Yvcentermath1$\yng(1)$}\ar[d]\ar[dl]& & & & &\\
&\emptyset\ar[d]\ar[dr]&\text{\tiny\Yvcentermath1$\yng(1)$}\ar[d]\ar[dr]\ar[drr] & & & & &\\
4&\emptyset\ar[d]&\text{\tiny\Yvcentermath1$\yng(1)$}\ar[d]\ar[dl] &\text{\tiny\Yvcentermath1$\yng(2)$}\ar[d]\ar[dl] & \text{\tiny\Yvcentermath1$\yng(1,1)$}\ar[d]\ar[dll] & & & \\
&\emptyset \ar[d]\ar[dr] &\text{\tiny\Yvcentermath1$\yng(1)$} \ar[d]\ar[dr]\ar[drr]&\text{\tiny\Yvcentermath1$\yng(2)$} \ar[d]\ar[drr]\ar[drrr]& \text{\tiny\Yvcentermath1$\yng(1,1)$}\ar[d]\ar[drr]\ar[drrr] & & &\\
6& \emptyset & \text{\tiny\Yvcentermath1$\yng(1)$} &\text{\tiny\Yvcentermath1$\yng(2)$} & \text{\tiny\Yvcentermath1$\yng(1,1)$} &\text{\tiny\Yvcentermath1$\yng(3)$} & \text{\tiny\Yvcentermath1$\yng(2,1)$} &\text{\tiny\Yvcentermath1$\yng(1,1,1)$}
}
\end{matrix}
\end{align*}
\caption{Branching diagram for the partition algebras.}
\label{figure: branching diagram for partition algebras}
  \end{figure}
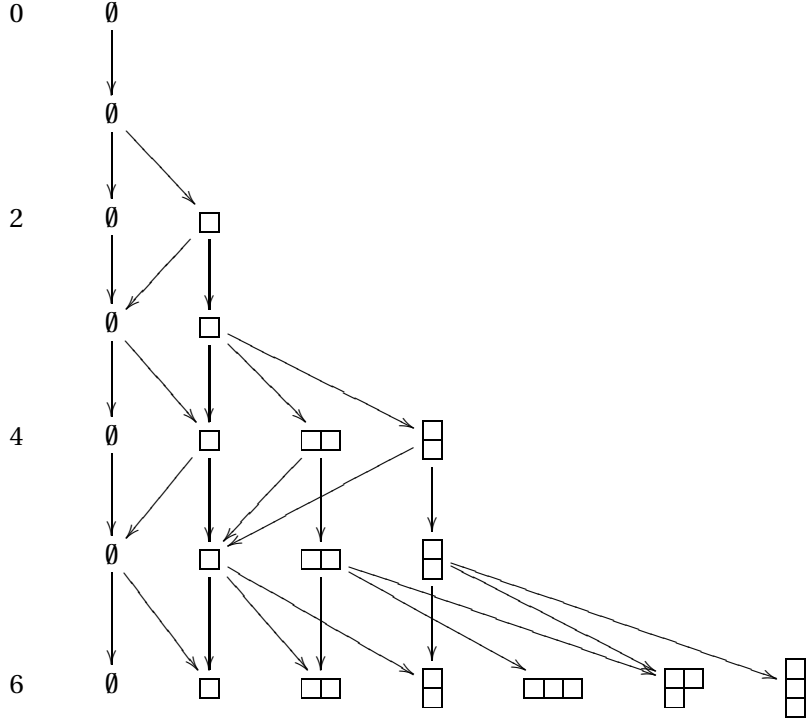

Next, we determine the branching coefficients for the two towers $(H_n)_{n\ge 0}$  and $(A_n)_{n\ge 0}$.
Let  $\lambda\in \widehat{H}_{2i-1}$ and $\mu\in\widehat{H}_{2i}$  with $\la \to \mu$ in $\widehat H$.   If  $\mu=\lambda\cup\{(r,\mu_r)\}$, let $a=\sum_{j=1}^r\mu_j$. Then the branching factors for the inclusion $H_{2i-1}\subseteq H_{2i}$ in the tower $(H_i)_{i\ge0}$ are given by 
\begin{align} \label{branching factors in H even}
d_{\lambda\to\mu}^{(2i)}=s_{a,i} \qquad \text{and} \qquad  u_{\lambda\to\mu}^{(2i)}=s_{i,a}\sum_{k=0}^{\lambda_r}s_{a,a-k},
\end{align}
where the elements $s_{i, j}$ are defined in Equation~\eqref{definition of s i j}.  
The branching factors for the inclusion $H_{2i}\subseteq H_{2i+1}$ in the tower $(H_{i})_{i\ge 0}$ are given by
\begin{align}  \label{branching factors in H odd}
d_{\lambda\to \la}^{(2i+1)}= u_{\lambda\to\la}^{(2i+1)}=1 \qquad \text{if $\lambda\in\widehat{H}_{2i} = \widehat{H}_{2i+1}$}.
\end{align}
For $\la \in \widehat{H}_k$ and $\mu \in \widehat{H}_{k+1}$,  define $\bar{d}\power {k+1}_{\la \to \mu}$ 
and $\bar{u}\power {k+1}_{\la \to \mu}$ by the same formulas, specifying elements of  the subalgebra of $A_{k+1}$ spanned by permutation diagrams; these are liftings in $A_{k+1}$ of the branching factors in   $H_{k+1}$ specified above.

\ignore{
If $\lambda\in\widehat{H}_{2i-1}$ and $\mu\in\widehat{H}_{2i}$, such that $\mu=\lambda\cup\{(j,\mu_j)\}$, let $a=\sum_{r=1}^{j}\mu_j$ and define
\begin{align*}
\bar{u}_{\lambda\to\mu}^{\,(2i)}=t_{i,a}
\sum_{r=0}^{\lambda_j}t_{a,a-r}&&\text{and}&&
\bar{d}_{\lambda\to\mu}^{\,(2i)}=t_{a,i}.
\end{align*}
If $\mu\in\widehat{H}_{2i}$ and $\nu\in\widehat{H}_{2i+1}$ such that $\mu\to\nu$ in $\widehat{H}$, define
\begin{align*}
\bar{u}_{\mu\to\nu}^{\,(2i+1)}=\bar{d}_{\mu\to\nu}^{\,(2i+1)}=1. 
\end{align*}
These elements are liftings in the partition algebras $A_{2i}$ or $A_{2i+ 1}$ of branching factors in the algebras $H_{2i}$ or $H_{2i+ 1}$ which were given in Equations~\eqref{branching factors in H even} and~\eqref{branching factors in H odd}.
}

By \hyperref[theorem:  closed form determination of the branching factors]{Theorem~\ref*{theorem:  closed form determination of the branching factors}}, the branching factors for the tower $(A_n)_{n \ge 0}$ can be chosen as follows:
Let $(\lambda,l)\in\hat{A}_{2i-1}$. If $(\mu,l)\in\hat{A}_{2i}$  and $(\lambda,l)\to(\mu,l)$ in $\hat{A}$, then 
$\la \subset \mu$ and 
\begin{align*}
d_{(\lambda,l)\to(\mu,l)}^{(2i)}=
\bar{d}_{\lambda\to\mu}^{\,(2i-2l)}e_{2i-2}^{(l)},
\end{align*}
and, if $(\mu,l+1)\in\hat{A}_{2i}$  and $(\lambda,l)\to(\mu,l+1)$ in $\hat{A}$, then  $\la = \mu$ and
\begin{align*}
d_{(\lambda,l)\to(\la,l+1)}^{(2i)}=
e_{2i-2}^{(l)}.
\end{align*}
Similarly, if $(\lambda,l)\in\hat{A}_{2i}$ and $(\mu,\el)\in\hat{A}_{2i+1}$ and $(\lambda,l)\to(\mu,l)$, then
$\la = \mu$ and
\begin{align*}
d_{(\lambda,l)\to(\la,l)}^{(2i+1)}=e_{2i-1}^{(l)},
\end{align*}
and, if $(\mu,l+1)\in\hat{A}_{2i+1}$ and $(\lambda,l)\to(\mu,l+1)$, then $\mu \subset \la$ and 
\begin{align*}
d_{(\lambda,l)\to(\mu,l+1)}^{(2i+1)}=
\bar{u}_{\mu\to\lambda}^{\,(2i-2l)}e_{2i-1}^{(l)}. 
\end{align*}
The $u$--coefficients  $\uu {(\la, \el)} {\mu, m)}  {n+1}$  are determined by similar formulas by \hyperref[theorem:  closed form determination of the branching factors]{Theorem~\ref*{theorem:  closed form determination of the branching factors}}.

Fix $n \ge 1$ and
 $(\la, \el) \in \hat A_n$.     For a path $\mft$ in $\hat A_n \power {\la, \el}$,   define $d_\mft$ to be 
the ordered product of the $d$--branching coefficients for the tower $(A_k)$ along the path $\mft$, as in 
Equation~\eqref{equation: ordered product of d coefficients}.   
Define  $c_{(\la,\el)}={c}_{(\la,0)}e_{i-1}^{(\el)}$,  where  $c_{(\la,0)}=\sum_{v\in\mathfrak{S}_\la}  v$, and
and $e_{i-1}^{(\el)}$ is defined in Equation~\eqref{notation e sub i super ell}.
From \hyperref[label:1]{Corollary~\ref*{label:1}} \ignore{and \hyperref[theorem:  closed form determination of the branching factors]{Theorem~\ref*{theorem:  closed form determination of the branching factors}} }
we obtain:

\ignore{
For $\mathfrak{t}=((\lambda^{(0)},l_0),(\lambda^{(1)},l_1),\ldots,(\lambda^{(i)},l_{i}))\in\hat{A}_{i}^{(\lambda,l)}$, let 
\begin{align*}
d_\mathfrak{t}=
d_{(\lambda^{(i-1)},l_{i-1})\to (\lambda^{(i)},l_{i})}^{(i)}
d_{(\lambda^{(i-2)},l_{i-2})\to (\lambda^{(i-1)},l_{i-1})}^{(i-1)}
\cdots 
d_{(\lambda^{(0)},l_{0})\to(\lambda^{(1)},l_{1})}^{(1)}.
\end{align*}
From \hyperref[label:1]{Corollary~\ref*{label:1}} and \hyperref[theorem:  closed form determination of the branching factors]{Theorem~\ref*{theorem:  closed form determination of the branching factors}} we obtain:
}

\begin{proposition}\label{partition algebra basis}  Let $R = \Z[\deltabold^{\pm 1}]$ and let 
$A_n = A_n(R; \deltabold)$  be 
partition algebra defined over $R$ with parameter $\deltabold$.  
For each $n$,    the set
\begin{align*}
\mathscr{A}_n=
\left\{
d_\mathfrak{s}^*c_{(\lambda,l)}d_\mathfrak{t}\ \big|\ 
\text{$\mathfrak{s},\mathfrak{t}\in\hat{A}_n^{(\lambda,l)}$, $(\lambda,l)\in\hat{A}_n$}
\right\},
\end{align*}
is an $R$--basis for $A_n$, and $(A_n,*,\hat{A}_n,\unrhd,\mathscr{A}_n)$ is a cell datum for $A_n$. 
\end{proposition}

\subsubsection{The Murphy basis and the generic ground ring}
It remains to show that the set $\mathscr{A}_n$ is a basis for the partition algebra $A_n(R_0; \deltabold)$ defined over the generic ground ring $R_0 = \Z[\deltabold]$.    Let $\mathcal{B}$ denote the diagram basis for $A_n(R_0; \deltabold)$. 

\begin{definition}
Let $1\le l\le k\le j$. A set partition $\varrho$  of $P = \{\p 1, \dots, \p j, \pbar 1,\dots,  \pbar j\}$
 is said to be of \emph{even type} $(k,l)$ if each element of the set of lower vertices $\lbrace \pbar{k-l+1}, \pbar{k-l+2},\ldots,\pbar k\rbrace$ lives in a block of size one; $\varrho$ is said to be of \emph{odd type} $(k,l)$ if all the  lower vertices in the set $\lbrace \pbar{k-l},\pbar{k-l+1},\ldots,\pbar k\rbrace$ live in the same block of $\varrho$. 
\end{definition}

\begin{lemma}
\label{lemma: partition types}
Let $\varrho$ be a partition.
\begin{enumerate}
\item\label{partition types.1} If $\varrho$ is of odd type $(k+1,m)$ and $ (\lambda,l) \to (\mu,m)$ is an edge from level $ \,2k$ to level $ \, 2k+1$ in $\widehat{A}$, then $\varrho d_{(\lambda,l) \to (\mu,m)}^{(2k+1)}$ is a $\mathbb{Z}$-linear combination of partitions of even type $(k,\el)$. 
\item\label{partition types.2} If $\varrho$ is of even type $(k,m)$ and  $ (\lambda,l) \to (\mu,m)$ is an edge from level $\,2k-1$ to level $\,2k$ in $\widehat{A}$, then $\varrho d_{(\lambda,l) \to (\mu,m)}^{(2k)}$is a $\mathbb{Z}$-linear combination of partitions of odd type $(k,\el)$. 
\end{enumerate}
\end{lemma}

\begin{proof}  
\ignore{  
\begin{proof}  
\begin{figure}[ht]
\begin{tikzpicture}
\foreach \x in {6.4,7.2,...,9.6} 
	\foreach \y in {2.3}
		\draw (\x,\y)  .. controls (\x+0.2,\y+0.3) and (\x+0.6,\y+0.3) ..  (\x+0.8,\y);
\draw (5.6,2.3) .. controls (6.9,2.9) and (9.9,2.9) .. (11.2,2.3);
\foreach \x in {4.0,4.8,5.6,...,11.5} 
	\foreach \y in {2.3,1.8,0.5}	
	\filldraw [black] (\x,\y) circle (1.1pt);
\foreach \x in {4.0,4.8,5.6,...,11.5} 
	\foreach \y in {1.8} 
 	\draw[dotted] (\x,\y) -- (\x,\y+0.5);
\foreach \x in {4.0,4.8,5.6,10.4,11.2}
	\draw (\x,0.5) -- (\x,1.8);
\foreach \x in {4.0, 4.8,10.4,11.2}
	\draw (\x,2.3) -- (\x,2.6);
\foreach \x in {4.0,4.8,10.4,11.2}
 \draw[dashed] (\x,2.5) -- (\x,2.8);
\end{tikzpicture}
\caption{}
\label{figure:  partition odd type case 1}
\end{figure}
}   
\begin{figure}[ht]
\begin{tikzpicture}
\foreach \x in {6.4,7.2,...,9.6} 
	\foreach \y in {2.3}
		\draw (\x,\y)  
		-- (\x+0.8,\y);
\foreach \x in {4.0,4.8,5.6,...,11.5} 
	\foreach \y in {2.3,1.8,0.5}	
	\filldraw [black] (\x,\y) circle (1.1pt);
\foreach \x in {4.0,4.8,5.6,...,11.5} 
	\foreach \y in {1.8} 
 	\draw[dotted] (\x,\y) -- (\x,\y+0.5);
\foreach \x in {4.0,4.8,5.6,10.4,11.2}
	\draw (\x,0.5) -- (\x,1.8);
\foreach \x in {4.0, 4.8,5.6,10.4,11.2}
	\draw (\x,2.3) -- (\x,2.6);
\foreach \x in {4.0,4.8, 5.6, 10.4,11.2}
 \draw[dashed] (\x,2.5) -- (\x,2.8);
 \node at (6.4, 0) {$ k-\el +1$};
 \node at (9.6, 0) {$k$};
\end{tikzpicture}
\caption{}
\label{figure:  partition odd type case 1}
\end{figure}
Assume that $\varrho$ is of odd type $(k+1,m)$ and $ (\lambda,l) \to (\mu,m)$ is an edge from level $ \,2k$ to level $ \, 2k+1$ in $\widehat{A}$.    Thus $\varrho$  has lower vertices
$\pbar{k-m +1},  \dots, \pbar{k+1}$ in one block.
 If $\el = m$,   then $\la = \mu$,  and 
$\dd {(\la, \el)}  {(\mu, \el)}  {2k + 1} = e_{2k-1} \power \el$.    It follows that $\varrho \dd {(\la, \el)}  {(\mu, m)}  {2k + 1} = \varrho  e_{2k-1} \power \el$ is equal to a single partition of even type $(k, \el)$, and that no factor of $\deltabold$ arises in the computation of the product, as shown in \hyperref[figure:  partition odd type case 1]{Figure~\ref*{figure:  partition odd type case 1}}.  
If $m = \el + 1$,  then $\mu \subset \el$  and 
$$\dd {(\la, \el)}  {(\mu, \el+ 1)}  {2k + 1}   = \uubar \mu \la {2k - 2\el}    e_{2k - 1} \power \el,$$
which is a sum of elements of the form $s_{k- \el, j}   e_{2k - 1} \power \el$  with $j \le k - \el$.   It follows that  $\varrho \dd {(\la, \el)}  {(\mu, \el + 1)}  {2k + 1}$ is equal a sum of distinct partitions, each of even type $(k, \el)$, and again no factor of $\deltabold$ appears in the computation of the product, as shown in \hyperref[figure:  partition odd type case 2]{Figure~\ref*{figure:  partition odd type case 2}}.    

\ignore{   
\begin{figure}[ht]
\begin{tikzpicture}
\foreach \x in {6.4,7.2,...,8.8,9.6} 
	\foreach \y in {2.3}
		\draw (\x,\y) .. controls (\x+0.2,\y+0.3) and (\x+0.6,\y+0.3) .. (\x+0.8,\y);
\draw (5.6,2.3) .. controls (6.9,2.9) and (9.9,2.9) .. (11.2,2.3);
\foreach \x in {4.0,4.8,5.6,...,11.5} 
	\foreach \y in {2.3,1.8,0.5}	
	\filldraw [black] (\x,\y) circle (1.1pt);
\foreach \x in {4.0,4.8,5.6,...,11.5} 
	\foreach \y in {1.8} 
 	\draw[dotted] (\x,\y) -- (\x,\y+0.5);
\foreach \x in {10.4,11.2}
	\draw (\x,0.5) -- (\x,1.8);
\foreach \x in {4.0,4.8,5.6}
	\draw (\x+0.8,0.5) -- (\x,1.8);
\draw (6.4,1.8) -- (4.0,0.5);
\foreach \x in {4.0,4.8,5.6,10.4,11.2}
	\draw (\x,2.3) -- (\x,2.6);
\foreach \x in {4.0,4.8,5.6,10.4,11.2}
 \draw[dashed] (\x,2.5) -- (\x,2.8);
\end{tikzpicture}
\caption{}
\label{figure:  partition odd type case 2}
\end{figure}
}    

\begin{figure}[ht]
\begin{tikzpicture}
\foreach \x in {6.4,7.2,...,8.8,9.6} 
	\foreach \y in {2.3}
		\draw (\x,\y) 
		--  (\x+0.8,\y);
\foreach \x in {4.0,4.8,5.6,...,11.5} 
	\foreach \y in {2.3,1.8,0.5}	
	\filldraw [black] (\x,\y) circle (1.1pt);
\foreach \x in {4.0,4.8,5.6,...,11.5} 
	\foreach \y in {1.8} 
 	\draw[dotted] (\x,\y) -- (\x,\y+0.5);
\foreach \x in {10.4,11.2}
	\draw (\x,0.5) -- (\x,1.8);
\foreach \x in {4.0,4.8,5.6}
	\draw (\x+0.8,0.5) -- (\x,1.8);
\draw (6.4,1.8) -- (4.0,0.5);
\foreach \x in {4.0,4.8,5.6,10.4,11.2}
	\draw (\x,2.3) -- (\x,2.6);
\foreach \x in {4.0,4.8,5.6,10.4,11.2}
 \draw[dashed] (\x,2.5) -- (\x,2.8);
   \node at (7.2, 0) {$ k-\el +1$};
 \node at (9.6, 0) {$k$};
\end{tikzpicture}
\caption{}
\label{figure:  partition odd type case 2}
\end{figure}

Assume now that $\varrho$ is of even type $(k,m)$ and  $ (\lambda,l) \to (\mu,m)$ is an edge from level $\,2k-1$ to level $\,2k$ in $\widehat{A}$.   Thus  the lower vertices $\pbar {k-m + 1},  \dots,  \pbar k$ each constitute a block of $\varrho$.  
If $\el = m$,  then $\la \subset \mu$  and
$$\dd {(\la, \el)}  {(\mu, \el)}  {2k }   = \ddbar  \la  \mu {2k - 2\el}    e_{2k - 2} \power \el.$$
But $\ddbar  \la  \mu {2k - 2\el} = s_{j, k-\el}$  for some $j \le k- \el$, and $\varrho' = \varrho s_{j, k-\el}$ is also a partition of even type $(k, \el)$.   Thus, we have to consider
$\varrho \dd {(\la, \el)}  {(\mu, \el)}  {2k + 1}  = \varrho s_{j, k-\el}  e_{2k - 2} \power \el =
\varrho'  e_{2k - 2} \power \el $,  where   $\varrho' $ is a partition of even type $(k, \el)$.   The product
$\varrho'  e_{2k - 2} \power \el $ is a single partition, of odd type $(k, \el)$,  and no power of $\deltabold$ occurs in the computation of the product, as shown in \hyperref[figure:  partition even type case 1]{Figure~\ref*{figure:  partition even type case 1}}.  

\ignore{   
and $\ddbar  \la  \mu {2k - 2\el} = s_{j, k-\el}$  for some $j \le k- \el$.    Now it follows that 
$\varrho \dd {(\la, \el)}  {(\mu, \el)}  {2k + 1}  = \varrho s_{j, k-\el}  e_{2k - 2} \power \el$ is a single partition, of odd type $(k, \el)$,  and no power of $\deltabold$ occurs in the computation of the product, as shown in \hyperref[figure:  partition even type case 1]{Figure~\ref*{figure:  partition even type case 1}}.  
}

\ignore{  
\begin{figure}[ht]
\begin{tikzpicture}
\foreach \x in {7.2,8.0,8.8,9.6} 
	\foreach \y in {0.5}
		\draw (\x,\y) .. controls (\x+0.2,\y+0.3) and (\x+0.6,\y+0.3) .. (\x+0.8,\y);
\foreach \x in {7.2,8.0,8.8,9.6} 
	\foreach \y in {1.8}
		\draw (\x,\y) .. controls (\x+0.2,\y-0.3) and (\x+0.6,\y-0.3) .. (\x+0.8,\y);
\draw (5.6,2.3) .. controls (6.9,2.9) and (9.9,2.9) .. (11.2,2.3);
\foreach \x in {4.8,5.6,...,11.5} 
	\foreach \y in {2.3,1.8,0.5}	
	\filldraw [black] (\x,\y) circle (1.1pt);
\foreach \x in {4.8,5.6,...,11.5} 
	\foreach \y in {1.8} 
 	\draw[dotted] (\x,\y) -- (\x,\y+0.5);
\foreach \x in {10.4,11.2}
	\draw (\x,0.5) -- (\x,1.8);
\foreach \x in {4.8,5.6,6.4,7.2}
	\draw (\x,0.5) -- (\x,1.8);
\foreach \x in {4.8,5.6,6.4,7.2,11.2}
	\draw (\x,2.3) -- (\x,2.6);
\foreach \x in {4.8,5.6,6.4,11.2}
 \draw[dashed] (\x,2.5) -- (\x,2.8);
\end{tikzpicture}
\caption{}
\label{figure:  partition even type case 1}
\end{figure}
}    

\begin{figure}[ht]
\begin{tikzpicture}
\foreach \x in {7.2,8.0,8.8,9.6} 
	\foreach \y in {0.5}
		\draw (\x,\y) 
		-- (\x+0.8,\y);
\foreach \x in {7.2,8.0,8.8,9.6} 
	\foreach \y in {1.8}
		\draw (\x,\y)  
		--(\x+0.8,\y);
\foreach \x in {4.8,5.6,...,11.5} 
	\foreach \y in {2.3,1.8,0.5}	
	\filldraw [black] (\x,\y) circle (1.1pt);
\foreach \x in {4.8,5.6,...,11.5} 
	\foreach \y in {1.8} 
 	\draw[dotted] (\x,\y) -- (\x,\y+0.5);
\foreach \x in {10.4,11.2}
	\draw (\x,0.5) -- (\x,1.8);
\foreach \x in {4.8,5.6,6.4,7.2}
	\draw (\x,0.5) -- (\x,1.8);
\foreach \x in {4.8,5.6,6.4,7.2,11.2}
	\draw (\x,2.3) -- (\x,2.6);
\foreach \x in {4.8,5.6,6.4,7.2, 11.2}
 \draw[dashed] (\x,2.5) -- (\x,2.8);
    \node at (7.2, 0) {$ k-\el $};
 \node at (10.4, 0) {$k$};
\end{tikzpicture}
\caption{}
\label{figure:  partition even type case 1}
\end{figure}

Finally, if $m = \el + 1$,  then $\la = \mu$ and $\dd {(\la, \el)}  {(\mu, \el + 1)} {2k} = e_{2k - 2} \power \el$. 
Again the product  $\varrho \dd {(\la, \el)}  {(\mu, \el)}  {2k + 1}  = \varrho  e_{2k - 2} \power \el$ is a single partition, of odd type $(k, \el)$,  and no power of $\deltabold$ occurs in the computation of the product.
The diagram for this case is similar to 
 \hyperref[figure:  partition even type case 1]{Figure~\ref*{figure:  partition even type case 1}}, except that the lower vertex $\pbar{k - \el}$ of $\varrho$ is now an singleton block of  $\varrho$.
\end{proof}

\begin{proposition}  \label{proposition:  transition matrix  for partition algebra}
Let $(\lambda,l)\in\widehat{A}_n$ and $\mathfrak{s,t}\in\widehat{A}_n^{(\lambda,l)}$. Then $d_\mathfrak{s}^*c_{(\lambda,l)}d_\mathfrak{t}=d_\mathfrak{s}^*c_{(\lambda,0)}e_{n-1}^{(l)}d_\mathfrak{t}$ lies in the $\mathbb{Z}$-span of $\mathcal{B}$. 
\end{proposition}
\begin{proof}   If $n = 2k + 1$ is odd, then $c_{(\lambda,0)}e_{n-1}^{(\el)}$  is a sum of partitions of odd type $(k+1, \el)$.    If $n = 2k$ is even, then $c_{(\lambda,0)}e_{n-1}^{(\el)}$  is a sum of partitions of even type $(k, \el)$.    The argument proceeds as in the proof of \hyperref[proposition:  expansion of GE basis in MW basis]{Proposition~\ref*{proposition:  expansion of GE basis in MW basis}}, with \hyperref[lemma: partition types]{Lemma~\ref*{lemma: partition types}} taking the place of \hyperref[lemma:  expansion of GE basis in MW basis 1]{Lemma~\ref*{lemma:  expansion of GE basis in MW basis 1}}. 
\end{proof}

\begin{theorem}  The set $\mathscr{A}_n = \{
d_\mathfrak{s}^*c_{(\lambda,l)}d_\mathfrak{t}   \mid      
\text{$\mathfrak{s},\mathfrak{t}\in\hat{A}_n^{(\lambda,l)}$, $(\lambda,l)\in\hat{A}_n$}
\},$ 
is a basis for the partition algebra $A_n(R_0; \deltabold)$  over the generic ground ring $R_0 = \Z[\deltabold]$.
\end{theorem}

\begin{proof}  The transition matrix $B$ between the diagram basis of the partition algebra and the set $\mathscr A_n$  has integer entries, according to \hyperref[proposition:  transition matrix  for partition algebra]{Proposition~\ref*{proposition:  transition matrix  for partition algebra}}, and in particular $d = \det(B)$ is an integer.    Since $\mathscr A_n$ is a basis for the partition algebra over $R = \Z[\deltabold^{\pm 1}]$, it follow that $B$ is invertible over $R$, so the integer $d$ is a unit in $R$.  It follows that $d = \pm 1$  and hence $B$ is invertible over $\Z$.  Hence $\mathscr A_n$ is a basis  of the partition algebra over $R_0$.    
\end{proof}

\begin{corollary} \label{corollary:  cell filtrations for partition algebra over generic ground ring}
Let $A_n$ denote the partition algebra over the generic ground ring $R_0 = \Z[\deltabold]$.  
For  $n \ge 0$ and for  $\Delta$ a cell module of  $A_{n+1}$,    the restricted module $\Res^{A_{n+1}}_{A_n}(\Delta)$ has an order preserving cell filtration.
\end{corollary}

\begin{proof}  The proof is the same as that of  \hyperref[corollary:   cell filtrations for BMW over generic ground ring]{Corollary~\ref*{corollary:   cell filtrations for BMW over generic ground ring}}.
\end{proof}

\appendix 

\section{A formula for Murphy basis elements}

In this appendix, we give an alternative formula for the Murphy basis of the Iwahori--Hecke algebra $\mathcal{H}_n(q^2)$ and for the Murphy--type bases of the various algebras treated in \hyperref[section: applications]{Section~\ref*{section: applications}}.  The formula was pointed out to us by Chris Bowman for the case of the Hecke algebra, and Bowman posed the question whether an analogous formula holds also for the BMW algebras, etc.

Consider a tower $(H_n)_{n \ge 0}$ of cyclic cellular algebras satisfying the hypotheses of \hyperref[subsection: bases of cell modules for coherent cyclic towers]{Section~\ref*{subsection: bases of cell modules for coherent cyclic towers}}.  As we will show, in all the examples  of such towers treated in this paper, the elements $c_\la$ and the branching factors $d\power {n+1}_{\mu \to \nu}$ and 
$u\power {n+1}_{\mu \to \nu}$ can be chosen to satisfy
\begin{equation} \label{intertwining formula 1}
c_\mu u\power {n+1}_{\mu \to \nu} = (d\power {n+1}_{\mu \to \nu})^*  c_\nu
\end{equation}
for all $n \ge 0$  and all $\mu \in \widehat H_n$ and $\nu \in \widehat H_{n+1}$  with $\mu \to \nu$.  

We define an ordered product of $u$--coeffcients along paths, analogous to the elements $d_\mft$ defined in Equation~\eqref{equation: ordered product of d coefficients}. 

Fix $n \ge 1$ and $\la \in \widehat H_n$.  For each path
$\mft = (\emptyset = \la \power 0, \la \power 1, \dots, \la \power n = \la) \in \widehat H_n^\la$,  define
\begin{equation} \label{equation: ordered product of u coefficients}
u_\mft =    {\uu {\emptyset} {\la \power 1} {1}}
{\uu {\la \power {1}} {\la \power 2} {2}} \,
\cdots
\, {\uu {\la \power {n-1}} {\la \power n} {n}} 
.\end{equation}

\begin{lemma}  \label{lemma: intertwining formula 2}  
Let $(H_n)_{n \ge 0}$ be a tower of cyclic cellular algebras satisfying the hypotheses of \hyperref[subsection: bases of cell modules for coherent cyclic towers]{Section~\ref*{subsection: bases of cell modules for coherent cyclic towers}}.   Suppose that
  Equation~\eqref{intertwining formula 1} holds for all $n \ge 0$  and all $\mu \in \widehat H_n$ and $\nu \in \widehat H_{n+1}$  with $\mu \to \nu$.   Then for all $n \ge 0$,  all  $\la \in \widehat H_n$ and all
  $\mft \in \widehat H_n^\la$,   one has
  \begin{equation}  \label{equation: intertwining formula 2}  
  d^*_\mft c_\la =  u_t.
 \end{equation}
 Consequently, the cellular basis of $H_n$ given in \hyperref[label:1]{Corollary~\ref*{label:1}} can be written as 
 $$
 \leftbrace u_\mfs  d_\mft \ \big \vert \   \la \in \widehat H_n  \text{ and }  \mfs, \mft \in  \widehat H_n^\la \rightbrace.
 $$
\end{lemma}

\begin{proof}  The formula~\eqref{equation: intertwining formula 2} is evident for $n = 0, 1$.  Fix $n \ge 1$ and suppose that~\eqref{equation: intertwining formula 2} holds for all $\la \in \widehat H_n$ and all
  $\mft \in  \widehat H_n^\la$.    Let $\nu \in \widehat H_{n+1}$  and
  $\mft = (\emptyset, \la\power 1, \dots, \la \power n = \mu, \la \power {n+1} = \nu)$ be an element of
  $\widehat H_{n+1}^\nu$.   Write $\mft' = \mft_{[0, n]}$.    Then, using the induction hypothesis as well as Equation~\eqref{intertwining formula 1}, we have
  $$
  \begin{aligned}
  u_\mft &= u_{\mft'}  \uu \mu \nu {n+1} 
  =  d_{\mft'}^* c_\mu \uu \mu \nu {n+1} 
  =  d_{\mft'}^*  (d\power {n+1}_{\mu \to \nu})^*  c_\nu  = d_\mft^*  c_\nu.
  \end{aligned}
  $$
 The  statement now follows by induction. 
\end{proof}

\begin{lemma}  \label{lemma:  intertwining formula for Hecke algebras}
The branching factors $\dd \mu \nu {n+1}$  and $\uu \mu \nu {n+1}$ for the tower of  Iwahori--Hecke algebras of the symmetric groups,  as determined in \hyperref[corollary:  d branching factors for the Hecke algebra]{Corollary~\ref*{corollary:  d branching factors for the Hecke algebra}} and \hyperref[corollary: u branching factors for the Hecke algebra]{Corollary~\ref*{corollary: u branching factors for the Hecke algebra}}, satisfy
$$
m_\mu \uu \mu \nu {n+1} = (\dd \mu \nu {n+1})^*  m_\nu,
$$
for all $n \ge 0$ and all partitions $\mu$ of size $n$ and $\nu$ of size $n+1$ with $\mu \to \nu$.  
\end{lemma}

\begin{proof} This is immediate from \hyperref[lemma:   explicit isomorphism induced perm module mod N with induced cell module]{Lemma~\ref*{lemma:   explicit isomorphism induced perm module mod N with induced cell module}},  part (1).
\end{proof}

\begin{corollary}  The Murphy basis of the Iwahori--Hecke algebra $\mathcal H_n(q^2)$   is given by
$$
m^\lambda_{\mfs \mft}  = u_\mfs d_\mft
$$
for $\la$ a partition of $n$  and $\mfs, \mft$ standard $\la$--tableaux.
\end{corollary}

\begin{proof} This follows from Equation~\eqref{equation:  formula for Murphy basis 1} and  \hyperref[lemma: intertwining formula 2]{Lemma~\ref*{lemma: intertwining formula 2}} and \hyperref[lemma:  intertwining formula for Hecke algebras]{Lemma~\ref*{lemma:  intertwining formula for Hecke algebras}}.
\end{proof}

\ignore{
Our next goal is to obtain similar formulas for the Murphy type bases of various algebras treated in \hyperref[section: applications]{Section~\ref*{section: applications}}.  To this end,  consider a pair of towers of cyclic cellular algebras $(H_n)_{n \ge 0}$ and $(A_n)_{n \ge 0}$  satisfying axioms (1)--(9) of \hyperref[subsection axioms]{Section~\ref*{subsection axioms}}.    Suppose that elements $c_\la$ and branching factors $\dd \mu \nu {n+1}$  and $\uu \mu \nu {n+1}$ have been chosen for the tower $(H_n)_{n \ge 0}$.  For $\la \in \widehat H_n$,  let $c_{(\la, 0)}$ be an element of $\pi_n\inv(c_\la)$, and for $(\la, \el) \in \hat A_n$,  define $c_{(\la, \el)} =  c_{(\la, 0)}  e_{n-1} \power \el$.   Let
$\ddbar \mu \nu {n+1}$ be a lift of $\dd \mu \nu {n+1}$ in $A_{n+1}$  and likewise 
$\uubar \mu \nu {n+1}$ a lift of $\uu \mu \nu {n+1}$ in $A_{n+1}$, and determine a system of branching factors for all edges in $\hat A$  by the formulas of \hyperref[theorem:  closed form determination of the branching factors]{Theorem~\ref*{theorem:  closed form determination of the branching factors}}.
}

Our next goal is to obtain similar formulas for the Murphy type bases of the various algebras treated in \hyperref[ection: applications]{Section~\ref*{section: applications}}.

\begin{proposition}   Let $A_n$ denote the $n$--th  BMW, Brauer, partition or Jones--Temperley--Lieb algebra.  The Murphy type basis of $A_n$ established in \hyperref[section: applications]{Section~\ref*{section: applications}} can be written in the form
$$
\leftbrace u_\mfs d_\mft \ \big | \  \la \in \hat A_n \text{ and }  \mfs, \mft \in \hat A_n^\la \rightbrace.
$$
\end{proposition}

\noindent{\em Sketch of proof.}   We need to show that if
$x \in \hat A_n$ and $y \in \hat A_{n+1}$  with $x \to y$ in the branching diagram $\hat A$,  then
\begin{equation} \label{equation: intertwining 3}
c_x \uu x y {n+1} = (\dd x y {n+1})^*  c_y,
\end{equation} 
where the elements $c_x \in A_n$  and $c_y,  \uu x y {n+1}, \dd x y {n+1} \in A_{n+1}$  are as specified in Section 6.  The result will then follow from \hyperref[lemma: intertwining formula 2]{Lemma~\ref*{lemma: intertwining formula 2}}.  For the Temperely--Lieb algebras, Equation~\eqref{equation: intertwining 3} is evident from the formulas in \hyperref[subsection: temperley lieb algebras]{Section~\ref*{subsection: temperley lieb algebras}} for the elements $c_x$ and for the branching factors.

For the BMW, Brauer and partition algebras,~\eqref{equation: intertwining 3} can be established in two steps.  The first step is to show that~\eqref{equation: intertwining 3} holds when $x = (\la, 0) \in \hat A_n$  and $y = (\hat \mu, 0) \in \hat A_{n+1}$.   For the Brauer and partition algebras this special case of~\eqref{equation: intertwining 3}  follows from \hyperref[lemma:   explicit isomorphism induced perm module mod N with induced cell module]{Lemma~\ref*{lemma:   explicit isomorphism induced perm module mod N with induced cell module}},  part (1), as all the elements involved  lie in a copy of the symmetric group algebra contained in $A_{n+1}$.   For the BMW algebras, it is necessary to establish an analogue of \hyperref[lemma:   explicit isomorphism induced perm module mod N with induced cell module]{Lemma~\ref*{lemma:   explicit isomorphism induced perm module mod N with induced cell module}},  part (1) which is valid in the algebra of the braid group.

The second step in the proof of~\eqref{equation: intertwining 3} is to establish the general case from the special case.  This involves a straightforward computation using the formulas of \hyperref[theorem:  closed form determination of the branching factors]{Theorem~\ref*{theorem:  closed form determination of the branching factors}}. 
\qed


\def\Dbar{\leavevmode\lower.6ex\hbox to 0pt{\hskip-.23ex \accent"16\hss}D}
\providecommand{\bysame}{\leavevmode\hbox to3em{\hrulefill}\thinspace}
\providecommand{\MR}{\relax\ifhmode\unskip\space\fi MR }
\providecommand{\MRhref}[2]{%
  \href{http://www.ams.org/mathscinet-getitem?mr=#1}{#2}
}
\providecommand{\href}[2]{#2}


\end{document}